
\documentclass[11pt]{amsart}
\oddsidemargin 0mm \evensidemargin 0mm \topmargin 0mm \textwidth
160mm \textheight 230mm \tolerance=9999

\usepackage[usenames,dvipsnames]{pstricks}
\usepackage[mathscr]{eucal}
\usepackage{amsfonts,amsmath,amssymb,amsthm,amscd,amsxtra}
\usepackage{enumerate,verbatim}
\usepackage[all,2cell,ps]{xy}
\usepackage[notcite]{}
\usepackage[pagebackref]{hyperref}
\usepackage{calc,color}

\theoremstyle{plain} 

\newtheorem{thm}{Theorem}[section]
\newtheorem{dfn}[thm]{Definition}

\newtheorem{eg}[thm]{Example}

\newtheorem{rmk}[thm]{Remark}
\newtheorem{prop}[thm]{Proposition}

\newtheorem{cor}[thm]{Corollary}
\newtheorem{lem}[thm]{Lemma}

\numberwithin{equation}{section}

\newcommand{\fm}{\mathfrak{m}}

\newcommand{\fp}{\mathfrak{p}}

\newcommand{\fa}{\mathfrak{a}}
\newcommand{\fb}{\mathfrak{b}}
\newcommand{\fc}{\mathfrak{c}}

\newcommand{\R}{\mathbf{R}}

\DeclareMathOperator{\ann}{ann} \DeclareMathOperator{\Ass}{Ass}
 
\DeclareMathOperator{\E}{E} \DeclareMathOperator{\hh}{H}
 \DeclareMathOperator{\Att}{Att}
\DeclareMathOperator{\cm}{NCM} \DeclareMathOperator{\X}{X}

\def\gd{\operatorname{\mathsf{G-dim}}}
\def\g{\operatorname{\mathsf{G}}}
\def\gkd{\operatorname{\mathsf{G}_{\it C}\mathsf{-dim}}}
\def\gkkd{\operatorname{\mathsf{G}_{\it K}\mathsf{-dim}}}
\def\gc{\operatorname{\mathsf{G}_{\it C}}}
\def\gcpd{\operatorname{\mathsf{G}_{\it C_p}\mathsf{-dim_{R_p}}}}
\def\pd{\operatorname{\mathsf{pd}}}
\def\rgr{\operatorname{\mathsf{r.grade}}}
\def\gr{\operatorname{\mathsf{grade}}}
\def\syz{\operatorname{\mathsf{syz}}}
\def\Tr{\mathsf{Tr}}
\def\trk{\mathsf{Tr}_{C}}
\def\trc{\mathcal{T}^C}
\def\trr{\mathcal{T}}
\def\trcp{\mathsf{Tr}_{C_\fp}}
\def\Min{\mathsf{Min}}
\def\cc{\mathsf{c}}
\def\ng{\mathsf{NG}_{C}}

\DeclareMathOperator{\coker}{Coker}

\def\depth{\operatorname{\mathsf{depth}}}
 
\def\Ext{\operatorname{\mathsf{Ext}}}

\def\Hom{\operatorname{\mathsf{Hom}}}

\DeclareMathOperator{\id}{id} 

\DeclareMathOperator{\Supp}{Supp} \DeclareMathOperator{\Spec}{Spec}

\def\Tor{\operatorname{\mathsf{Tor}}}

\def\urltilda{\kern -.15em\lower .7ex\hbox{\~{}}\kern .04em}
\def\urldot{\kern -.10em.\kern -.10em}\def\urlhttp{http\kern -.10em\lower -.1ex
\hbox{:}\kern -.12em\lower 0ex\hbox{/}\kern -.18em\lower
0ex\hbox{/}}

\begin{document}
\baselineskip=15pt

\title[Linkage of modules and the Serre conditions]
 {Linkage of modules and the Serre conditions }

\bibliographystyle{amsplain}

\author[M. T. Dibaei]{Mohammad T. Dibaei$^{1}$}
\author[A. Sadeghi]{Arash Sadeghi$^2$}\

\address{$^{1}$ Faculty of Mathematical Sciences and Computer,
Kharazmi University, Tehran, Iran.}

\address{$^{1, 2}$ School of Mathematics, Institute for Research in Fundamental Sciences (IPM), P.O. Box: 19395-5746, Tehran, Iran }
\email{dibaeimt@ipm.ir} \email{sadeghiarash61@gmail.com}

\keywords{Auslander class, linkage of modules, semidualizing
modules, $\gc$--dimension modules.\\
1. M.T. Dibaei was supported in part by a grant from IPM (No.
93130110)} \subjclass[2000]{13C15, 13D07, 13D02, 13H10}
 \maketitle
\begin{center} {\it To the memory of Jan Strooker}\end{center}
\begin{abstract}
Let $R$ be semiperfect commutative Noetherian ring and $C$ be a
semidualizing $R$--module. The connection of the Serre condition
$(S_n)$ on a horizontally linked $R$-module of finite
$\gc$-dimension with the vanishing of certain cohomology modules of
its linked module is discussed. As a consequence, it is shown that
under some conditions Cohen-Macaulayness is preserved under
horizontal linkage.
\end{abstract}

\section{introduction}
The theory of linkage of algebraic varieties was introduced by
Peskine and Szpiro \cite{PS}. Recall that two ideals $\fa$ and $\fb$
in a Cohen-Macaulay local ring $R$ are said to be linked if there is
a regular sequence $\alpha$ in their intersection such that
$\fa=(\alpha):\fb$ and $\fb=(\alpha):\fa$. The first main theorem in
the theory of linkage was due to C. Peskine and L. Szpiro. They
proved that over a Gorenstein local ring $R$ with linked ideals
$\fa$ and $\fb$, $R/\fa$ is Cohen-Macaulay if and only if $R/\fb$
is. They also gave a counter-example to show that the above
statement is no longer true if the base ring is Cohen-Macaulay but
non-Gorenstein. In \cite[Theorem 4.1]{Sc}, Schenzel proved that,
over a Gorenstein local ring $R$ with maximal ideal $\fm$, the Serre
condition $(S_r)$ for $R/{\fa}$ is equivalent to the vanishing of
the local cohomology groups $\hh^i_{\fm}(R/{\fb})= 0$ for all $i$,
$\dim(R/{\fb})-r<i<\dim(R/{\fb})$, provided $\fa$ an $\fb$ are
linked by a Gorenstein ideal $\fc$ (i.e.
 $\fc\subseteq\fa\cap\fb$, $\fa=\fc:_R\fb$ and $\fb=\fc:_R\fa$, equivalently, the ideals
$\fa/\fc$ and $\fb/\fc$ of the ring $R/\fc$ are linked by the zero
ideal).

In \cite{MS}, Martsinkovsky and Strooker generalized the notion of
linkage for modules over non-commutative semiperfect Noetherian
rings over which any finitely generated module admits a projective
cover. They introduced the operator $\lambda=\Omega\Tr$ and showed
that ideals $\fa$ and $\fb$ are linked by zero ideal if and only if
$R/\fa\cong\lambda (R/\fb)$ and $R/\fb\cong\lambda (R/\fa)$
\cite[Proposition 1]{MS}.

The present authors, in \cite[Theorem 4.2]{DS}, extended Schenzel's
result for any horizontally linked module of finite $\g$-dimension
over a more general ground ring, i.e. over a Cohen-Macaulay local
ring. More precisely, for a horizontally linked module $M$ of finite
Gorenstein dimension over a Cohen-Macaulay local ring $R$, it is
shown that $M$ satisfies the Serre condition $(S_n)$ for some $n>0$
if and only if $\hh^i_\fm(\lambda M)=0$ for all $i$, $\dim
R-n<i<\dim R$. In this paper, we continue our study about the effect
of the Serre condition on a horizontally linked module and extend
Schenzel's result in different directions.

Now we describe the organization of the paper. In Section 2, after
preliminary notions and definitions, we generalize a result of
Auslander and Bridger \cite[Theorem 4.25]{AB} for modules in the
Auslander class $\mathcal{A}_C$, with respect to a semidualizing
module $C$ (see Theorem \ref{th5}). As a consequence, we give a
generalization of Schenzel's result for modules in the Auslander
class with respect to a semidualizing module. For a semidualizing
$R$--module $C$ and a stable $R$--module $M$, it is shown that if
$M\in\mathcal{A}_C$ and $\id_{R_\fp}(C_\fp)<\infty$ for all $\fp\in
\Spec R$ with $\depth R_\fp\leq n-1$, then $M$ satisfies
$\widetilde{S}_n$ (i.e.
 $\depth_{R_\fp} (M_\fp)\geq \min\{n, \depth R_\fp\}$ for all
 $\fp\in\Supp_R(M)$) if and only if $M$ is horizontally linked and
$\Ext^i_R(\lambda M,C)=0$ for all $i$, $0<i<n$ (see Corollary
\ref{cor7}).

In Section 3, we study the theory of linkage for maximal
Cohen-Macaulay modules (mCM). We present a characterization of a
mCM--module whose linked module is also mCM (Theorem \ref{the1}). As
a consequence, we obtain the following result for the linkage of
ideals. Assume that $R$ is a Cohen-Macaulay local ring of dimension
$d$ with canonical module $\omega_R$ which is generically Gorenstein
so that $\omega_R$ is identified with an ideal of $R$. It is shown
that if an ideal $I$ is linked to an ideal $J$ by zero ideal such
that $I\omega_R=I\cap\omega_R$ and $R/I$ is Cohen-Macaulay, then
$R/J$ is Cohen-Macaulay if and only if $R/I+\omega_R$ is
Cohen-Macaulay of dimension $d-1$ (see Corollary \ref{theorem3}).

 In Section 4, for a semidualizing module
$C$, we study the theory of linkage for modules of finite
$\gc$-dimension which is inspired by authors pervious work \cite{DS}
on the theory of linkage for modules of finite $\g$-dimension. Let
$R$ be a Cohen-Macaulay local ring, $C$ a semidualizing $R$-module
and $c_1$, $c_2$ two $\gc$-Gorenstein ideals. Assume that $M_1$,
$M$, and $M_2$ are $R$-modules such that
$M_1\underset{\fc_1}{\thicksim}M\underset{\fc_2}{\thicksim}M_2$. In
Theorem \ref{prop}, it is shown that if $\gkd_R(M)<\infty$, then
$M_1$ is Cohen-Macaulay if and only if $M_2$ is. For a horizontally
linked module $M$ of finite $\gc$-dimension and a positive integer
$n$ such that $\lambda M\in \mathcal{A}_C$ (e.g. $\pd_R(\lambda
M)<\infty$), it is shown that $\lambda M$ satisfies
$\widetilde{S}_n$ if and only if $\Ext^i_R(M,C)=0$ for all $i$,
$0<i<n$ (see Theorem \ref{th1}), which generalizes \cite[Proposition
2.6]{DS} and also can be viewed as a generalization of \cite[Theorem
4.1]{Sc}. As a consequence, for a Cohen-Macaulay local ring $R$ and
a horizontally linked module $M$ of finite $\gc$-dimension such that
$\lambda M\in \mathcal{A}_C$, $M$ is mCM if and only if $\lambda M$
is (see Corollary \ref{cor5}). In Corollary \ref{cor2}, for a
horizontally linked $R$--module $M$ over a Gorenstein local ring
$R$, we determine the attached primes of the local cohomology module
$\hh^{\tiny{\cc(M)}}_\fm(M)$, in terms of the depth of $\lambda M$,
where $\cc(M)$ denotes the greatest integer $n(<\dim_R(M))$ such
that $\hh^{n}_\fm(M)\neq0$. As a consequence, it is shown that
$\hh^{\tiny{\cc(M)}}_\fm(M)$ is finitely generated if and only if
$\depth_R(\lambda M)+\cc(M)=\depth R$ and $\depth_{R_\fp}((\lambda
M)_\fp)+\cc(M)>\depth R$ for all $\fp\in\cm(M)\setminus\{\fm\}$,
where $\cm(M)$ denotes the non--Cohen-Macaulay locus of $M$ (see
Theorem \ref{cor3}).

In Section 5, we study the theory of linkage for reduced
$\gc$-perfect module (Definition \ref{rgr}). These modules can be
viewed as a generalization of the Eilenberg-Maclane module \cite{H}.
For a reduced $\gc$-perfect module $M$ of $\gc$-dimension $n$ over a
Cohen-Macaulay local ring $R$, it is shown in Theorem \ref{th4} that
if $\lambda M\in\mathcal{A}_C$, then $\depth_R(M)+\depth_R(\lambda
M)=\depth R+\depth_R(\Ext^n_R(M,C))$ which is a generalization of
\cite[Theorem 3.3]{DS}. We end the paper by determining conditions
under which an
Eilenberg-Maclane horizontally linked $R$--module is generalized Cohen-Macaulay (see Corollary \ref{cor1}).\\

Throughout the paper, $R$ is a commutative Noetherian ring and all
$R$--modules $M$, $N$, $\cdots$ are finite (i.e. finitely
generated). Whenever, $R$ is assumed local, its unique maximal ideal
is denoted by $\fm$.

For a finite presentation $P_1\overset{f}{\rightarrow}P_0\rightarrow
M\rightarrow 0$ of an $R$--module $M$, its transpose, $\Tr M$, is
defined as $\coker f^*$, where $(-)^* := \Hom_R(-,R)$, which
satisfies in the exact sequence
\begin{equation}\label{1.1}
0\rightarrow M^*\rightarrow P_0^*\rightarrow P_1^*\rightarrow \Tr
M\rightarrow 0.
\end{equation}
Moreover, $\Tr M$ is unique up to projective equivalence. Thus all
minimal projective presentations of $M$ represent isomorphic
transposes of $M$. For two $R$--modules $M$ and $N$, there exists
the following exact sequence
\begin{equation}\label{1.2}
0\longrightarrow \Ext^1_R(\Tr M,N)\longrightarrow
M\underset{R}{\otimes}N\overset{e_M^N}{\longrightarrow}\Hom_R(M^*,N)
\longrightarrow\Ext^2_R(\Tr M,N)\longrightarrow0,
\end{equation}
where $e_M^N:M\underset{R}{\otimes}N\rightarrow \Hom_R(M^*,N)$ is
the evaluation map \cite[Proposition 2.6]{AB}.

The syzygy of a module $M$, denoted by $\Omega M$, is the kernel of
an epimorphism $P\overset{\alpha}{\rightarrow}M$, where $P$ is a
projective $R$--module, so that it is unique up to projective
equivalence. Thus $\Omega M$ is uniquely determined, up to
isomorphism, by a projective cover of $M$.

Martsinkovsky and Strooker  \cite{MS} generalized
the notion of linkage for modules over non-commutative semiperfect
Noetherian rings (i.e. finitely generated modules over such rings have projective covers).
\begin{dfn}\label{d1}\cite[Definition 3]{MS}
Let $R$ be a semiperfect ring. Two $R$--modules $M$ and $N$ are said
to be\emph{ horizontally linked} if $M\cong \lambda N$ and
$N\cong\lambda M$. Also, $M$ is called horizontally linked (to
$\lambda M$) if $M\cong\lambda^2M$.
\end{dfn}
Note that a commutative ring $R$ is semiperfect if and only if it is
a finite direct product of commutative local rings \cite[Theorem
23.11]{L}. A \emph{stable} module is a module with no non-zero
projective direct summands. Let $R$ be a semiperfect ring, $M$ a
stable $R$--module and $P_1\rightarrow P_0\rightarrow M\rightarrow
0$ a minimal projective presentation of $M$. Then $P_0^*\rightarrow
P_1^*\rightarrow \Tr M\rightarrow 0$ is a minimal projective
presentation of $\Tr M$ \cite[Theorem 32.13]{AF}. The following
induced exact sequences
\begin{equation}\tag{\ref{d1}.1}
0\longrightarrow M^*\longrightarrow P_0^*\longrightarrow \lambda
M\longrightarrow0,
\end{equation}
\begin{equation}\tag{\ref{d1}.2}
0\longrightarrow \lambda M\longrightarrow P_1^*\longrightarrow \Tr
M\longrightarrow0,
\end{equation}
which will be quoted in this paper.

An $R$--module $M$ is called a \emph{syzygy module} if it is
embedded in a projective $R$--module. Let $i$ be a positive integer,
an $R$--module $M$ is said to be an $i$th syzygy if there exists an
exact sequence
$$0\rightarrow M\rightarrow P_{i-1}\rightarrow\cdots\rightarrow P_0$$ with the $P_0,\cdots,P_{i-1}$ are
projective. By convention, every module is a $0$th syzygy.

A characterization of a module to be horizontally linked, involving
a syzygy property, is given bellow.

\begin{thm}\cite[Theorem 2 and Proposition 3]{MS}\label{MS}
Let $R$ be a semiperfect ring. An $R$--module $M$ is horizontally
linked if and only if it is stable and $\Ext^1_R(\Tr M,R)=0$,
equivalently $M$ is stable and a syzygy module.
\end{thm}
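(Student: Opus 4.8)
The plan is to route everything through the canonical biduality map $\sigma_M\colon M\to M^{**}$, which is precisely the evaluation map $e_M^R$ of \eqref{1.2} under the identifications $M\otimes_R R\cong M$ and $\Hom_R(M^*,R)=M^{**}$. Specializing \eqref{1.2} to $N=R$ yields the exact sequence
\begin{equation}\label{star}
0\longrightarrow \Ext^1_R(\Tr M,R)\longrightarrow M\xrightarrow{\ \sigma_M\ } M^{**}\longrightarrow \Ext^2_R(\Tr M,R)\longrightarrow 0 ,
\end{equation}
whence $\ker\sigma_M=\Ext^1_R(\Tr M,R)$. The final (\emph{equivalently}) clause falls out at once: $\Ext^1_R(\Tr M,R)=0$ means exactly that $\sigma_M$ is injective, i.e. $M$ is torsionless; and $M$ is torsionless if and only if it is a syzygy module, since $\sigma_M$ embeds $M$ into $M^{**}=\Hom_R(M^*,R)$, which in turn embeds into a projective module upon dualizing a free presentation of $M^*$, while conversely every submodule of a projective module is torsionless. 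So the two right-hand conditions in the statement agree, and it remains to prove that $M$ is horizontally linked if and only if $M$ is stable with $\Ext^1_R(\Tr M,R)=0$.

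The core of the proof is the natural isomorphism $\lambda^2 M\cong\im\sigma_M$ valid for every stable $M$. First I would observe that, because $M$ is stable, $P_0^*\to P_1^*\to\Tr M\to 0$ is a minimal projective presentation, so $P_1^*\to\Tr M$ is a projective cover and therefore $P_0^*\to\lambda M$, the second map of (\ref{d1}.1), is the projective cover of $\lambda M$; in particular $\lambda M$ is again stable (a syzygy coming from a projective cover has no nonzero projective summand, cf. \cite{AB}), so the same apparatus applies to it. Next I would apply $\Hom_R(-,R)$ to (\ref{d1}.1) and use $P_0^{**}\cong P_0$ to get the exact sequence $0\to(\lambda M)^*\to P_0^{**}\to M^{**}\to\Ext^1_R(\lambda M,R)\to 0$. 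The middle map is the bidual of the projective cover $P_0\to M$, so by naturality of biduality its image equals $\im\sigma_M$; thus $\coker\big((\lambda M)^*\to P_0^{**}\big)\cong\im\sigma_M$. On the other hand, the defining sequence (\ref{d1}.1) for the stable module $\lambda M$, whose projective cover is $P_0^*$, reads $0\to(\lambda M)^*\to P_0^{**}\to\lambda^2 M\to 0$ with the \emph{same} first map $(\lambda M)^*\to P_0^{**}$. Comparing cokernels gives $\lambda^2 M\cong\im\sigma_M$.

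Granting this, the remaining equivalences are formal. If $M$ is stable and $\Ext^1_R(\Tr M,R)=0$, then $\sigma_M$ is injective by \eqref{star}, so $M\cong\im\sigma_M\cong\lambda^2 M$ and $M$ is horizontally linked. Conversely, if $M$ is horizontally linked then $M\cong\lambda^2 M=\lambda(\lambda M)$ is of the form $\lambda(-)$ and hence stable; moreover $M\cong\im\sigma_M$ makes the natural surjection $M\twoheadrightarrow\im\sigma_M$ a surjective endomorphism of a finite module over a Noetherian ring, thus injective, forcing $\ker\sigma_M=\Ext^1_R(\Tr M,R)=0$. The main obstacle I anticipate is the identification $\lambda^2 M\cong\im\sigma_M$: the delicate points are verifying that $P_0^*\to\lambda M$ is genuinely a projective cover (so that $\lambda M$ is stable and the formalism iterates) and checking, via the naturality square of biduality, that the bidual of the projective cover $P_0\to M$ has image exactly $\im\sigma_M$; once these are in place the comparison of the two short exact sequences is immediate.
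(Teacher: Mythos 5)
The paper itself offers no proof of this statement---it is quoted from [MS, Theorem 2 and Proposition 3]---so your proposal has to be judged on its own terms. Its skeleton is the right one, and indeed essentially the Martsinkovsky--Strooker route: reduce everything to the identity $\lambda^2 M\cong\im\sigma_M$ for stable $M$, which together with \eqref{1.2} at $N=R$ is exactly the exact sequence $0\to\Ext^1_R(\Tr M,R)\to M\to\lambda^2M\to0$ recorded in Remark \ref{remark3}(ii). But as written your proof has a genuine gap: the auxiliary claim that $\lambda M$ is stable, justified by ``a syzygy coming from a projective cover has no nonzero projective summand,'' is false. Take $R=k[[x]]$ and $M=k=R/(x)$. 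Then $M$ is stable, $\Tr M\cong k$, and $\lambda M=\Omega\Tr M=\Omega k=(x)\cong R$ is a nonzero free module; more generally, over a regular local ring of dimension $n$ one has $\Omega^nk\cong R$, so syzygies taken along projective covers can perfectly well have free summands. This false claim is load-bearing twice. First, it is your licence to write (\ref{d1}.1) for $\lambda M$ and get $0\to(\lambda M)^*\to P_0^{**}\to\lambda^2M\to0$, whereas (\ref{d1}.1) is only available for stable modules. Second, and worse, the entire converse direction (``horizontally linked $\Rightarrow$ stable'') rests on nothing but the assertion that every module of the form $\lambda(-)$ is stable, which the same example refutes ($\lambda k\cong R$); note that in that direction you then also invoke the key identity for $M$, whose stability is precisely what is in question.

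Both defects are repairable, but the repairs require actual arguments. For the identity: over a semiperfect ring write $\lambda M=Q\oplus X$ with $Q$ projective and $X$ stable (Krull--Schmidt); since $\Tr$ and $\lambda$ are computed from minimal presentations they are blind to projective summands, so $\lambda^2M=\lambda X$, while the projective cover $P_0^*\to\lambda M$ decomposes as $\mathrm{id}_Q\oplus(X_0\to X)$, whose dual is an isomorphism on the $Q^*$-component; the cokernel comparison then goes through with $X$ in place of $\lambda M$, and $\lambda^2M\cong\im\sigma_M$ holds for every stable $M$. (Alternatively, quote Remark \ref{remark3}(ii) with $n=1$, applied to $\Tr M$ and using $\Tr\Tr M\cong M$.) For the converse: write $M=P\oplus M'$ with $P$ projective and $M'$ stable; since $\lambda^2M=\lambda^2M'$, the corrected identity gives $M\cong M'/\Ext^1_R(\Tr M',R)$, and composing the resulting surjection $M'\twoheadrightarrow P\oplus M'$ with the projection onto $M'$ yields a surjective, hence injective, endomorphism of the Noetherian module $M'$; this forces $\Ext^1_R(\Tr M',R)=0$ and then $P=0$. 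With these two patches your argument becomes a complete and correct proof.
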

\begin{dfn}\label{d3}
An $R$--module $C$ is called a \emph{semidualizing} module, if the homothety morphism $R\rightarrow\Hom_R(C,C)$ is
an isomorphism and $\Ext^i_R(C,C)=0$ for all $i>0$.
\end{dfn}
Semidualizing modules are initially studied in \cite{F} and
\cite{G}. It is clear that $R$ itself is a semidualizing
$R$--module. Over a Cohen-Macaulay local ring $R$, a canonical
module $\omega_R$ of $R$ is a semidualizing module with finite
injective dimension.

Let $C$ be a semidualizing $R$--module, $M$ an $R$--module. Let
$P_1\overset{f}{\rightarrow}P_0\rightarrow M\rightarrow 0$ be a
projective presentation of $M$. The transpose of $M$ with respect to
$C$, $\trk M$, is defined to be $\coker f^{\triangledown}$, where
$(-)^{\triangledown} := \Hom_R(-,C)$, which satisfies the exact
sequence
\begin{equation}\tag{\ref{d3}.1}
0\rightarrow M^{\triangledown}\rightarrow
P_0^{\triangledown}\rightarrow P_1^{\triangledown}\rightarrow \trk
M\rightarrow 0.
\end{equation}
By \cite[Proposition 3.1]{F}, there exists the following exact
sequence
\begin{equation}\tag{\ref{d3}.2}
0\rightarrow\Ext^1_R(\trk M,C)\rightarrow M\rightarrow
M^{\triangledown\triangledown}\rightarrow\Ext^2_R(\trk
M,C)\rightarrow0.
\end{equation}
Therefore, one has the following exact sequence (see for example
\cite[Theorem 2.4]{Z})
\begin{equation}\tag{\ref{d3}.3}
0\rightarrow\Ext^1_R(M,C)\rightarrow\trk M\rightarrow (\trk
M)^{\triangledown\triangledown}\rightarrow\Ext^2_R(M,C)\rightarrow0.
\end{equation}

The Gorenstein dimension has been extended to $\gc$--dimension by
Foxby in \cite{F} and by Golod in \cite{G}.
\begin{dfn}
An $R$--module $M$ is said to have $\gc$--dimension zero if $M$ is
$C$-reflexive, i.e. the canonical map $M\rightarrow
M^{\triangledown\triangledown}$ is bijective and
$\Ext^i_R(M,C)=0=\Ext^i_R(M^{\triangledown},C)$ for all $i>0$.
\end{dfn}
A $\gc$-resolution of an $R$--module $M$ is a right acyclic complex
of $\gc$-dimension zero modules whose $0$th homology is $M$. The
module $M$ is said to have finite $\gc$-dimension, denoted by
$\gkd_R(M)$, if it has a $\gc$-resolution of finite length. From the
exact sequences (\ref{d3}.1) and (\ref{d3}.2), it is clear that
$\gkd_R(M)=0$ if and only if $\Ext^i_R(\trk M,C)=0=\Ext^i_R(M,C)$
for all $i>0$. Hence one has the following conclusion.

\begin{rmk}\label{rem1}
Let $C$ be a semidualizing $R$--module. For an $R$--module $M$,
$\gkd_R(M)=0$ if and only if $\gkd_R(\trk M)=0$.
\end{rmk}

\begin{proof}
It is straightforward by using exact sequences (\ref{d3}.1),
(\ref{d3}.2) and (\ref{d3}.3).
\end{proof}
Note that, over a local ring $R$, a semidualizing $R$--module $C$ is
a canonical module if and only if $\gkd_R(M)<\infty$ for all
finitely generated $R$--modules $M$ (see \cite[Proposition
1.3]{Ge}).

Recall that the $\gc$-dimension of a module $M$, if finite, can be
expressed as follows.
\begin{thm}(\cite[4,8]{G})\label{G3}
For a semidualizing $R$--module $C$ and an $R$--module $M$ of finite
$\gc$--dimension, the following statements hold true.
\begin{itemize}
       \item[(i)]{$\gkd_R(M)=\sup\{i\mid\Ext^i_R(M,C)\neq0, i\geq0\}$,}
        \item[(ii)]{If $R$ is local, then $\gkd_R(M)=\depth R-\depth_R(M)$.}
\end{itemize}
\end{thm}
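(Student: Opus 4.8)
The plan is to treat the two parts in order, relying on the exact sequences (\ref{d3}.1)--(\ref{d3}.3) together with the standard structural properties of $\gc$-dimension: the modules of $\gc$-dimension zero form a resolving class, so a module $M$ with $\gkd_R(M)=g\geq 1$ fits into a short exact sequence $0\to M'\to G_0\to M\to 0$ in which $G_0$ has $\gc$-dimension zero and $\gkd_R(M')=g-1$. Throughout I write $k=R/\fm$ for the residue field in the local case.

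For (i), set $g=\gkd_R(M)$ and $s=\sup\{i\mid\Ext^i_R(M,C)\neq0\}$. First I would establish $s\leq g$ by dimension shifting: choosing a $\gc$-resolution $0\to G_g\to\cdots\to G_0\to M\to 0$ and splitting it into short exact sequences, the vanishing $\Ext^i_R(G_j,C)=0$ for $i>0$ propagates to give $\Ext^i_R(M,C)=0$ for $i>g$. For the reverse inequality I would induct on $g$. When $g=0$ the module $M$ is $C$-reflexive and nonzero, so $\Ext^0_R(M,C)=M^{\triangledown}\neq 0$ and $s=0$. When $g\geq 1$, apply $\Hom_R(-,C)$ to $0\to M'\to G_0\to M\to 0$; since $\Ext^i_R(G_0,C)=0$ for $i>0$, the long exact sequence yields isomorphisms $\Ext^{i+1}_R(M,C)\cong\Ext^i_R(M',C)$ for all $i\geq1$, so by the induction hypothesis $\Ext^g_R(M,C)\cong\Ext^{g-1}_R(M',C)\neq0$, giving $s\geq g$.

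For (ii), the decisive input is the base case $g=0$: a nonzero module $M$ of $\gc$-dimension zero satisfies $\depth_R(M)=\depth R$. Here I would use that $\depth_R(C)=\depth R$ for a semidualizing module, and that $M\cong M^{\triangledown\triangledown}$ embeds in a finite direct sum $C^n$ (dualize a surjection $R^n\twoheadrightarrow M^{\triangledown}$). Inducting on $\depth R$: if $\depth R=0$ then $\Hom_R(k,M)\cong\Hom_R(k,\Hom_R(M^{\triangledown},C))\cong\Hom_R(k\otimes_R M^{\triangledown},C)$, and since $k\otimes_R M^{\triangledown}$ is a nonzero $k$-vector space this is a nonzero power of $\Hom_R(k,C)\neq0$, so $\depth_R(M)=0$; if $\depth R>0$ then any $x\in\fm$ regular on $R$ is regular on $C$ and, through the embedding $M\hookrightarrow C^n$, on $M$, and passing to $R/xR$ (over which $C/xC$ is semidualizing and $M/xM$ is of $\gc$-dimension zero) the claim follows by induction. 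Granting this, the general formula is approached by induction on $g$: from $0\to M'\to G_0\to M\to 0$ one has $\depth_R(G_0)=\depth R$ and, by induction, $\depth_R(M')=\depth R-(g-1)$, and the depth lemma applied to this sequence yields $\depth_R(M)\geq \depth R-g$.

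The matching upper bound $\depth_R(M)\leq\depth R-g$ is where I expect the \textbf{main obstacle} to lie: the depth lemma pins down $\depth_R(M)$ only when the two outer depths differ (i.e.\ $g\geq2$), while the borderline case $g=1$ must be excluded separately, since a priori nothing in the depth lemma forbids $\gkd_R(M)=1$ together with $\depth_R(M)=\depth R$. To close this gap I would instead run the whole of (ii) by induction on $\depth R$: when $\depth_R(M)>0$ choose $x\in\fm$ regular simultaneously on $R$, $C$ and $M$ (possible by prime avoidance, as all three depths are then positive), reduce modulo $x$ to drop $\depth R$ by one while preserving $\gc$-dimension, and invoke the inductive hypothesis; the remaining cases $\depth_R(M)=0$ and $\depth R=0$ I would settle directly via part (i), exhibiting $\Ext^{\depth R}_R(M,C)\neq0$ through a socle/associated-prime argument built from the inclusion $k\hookrightarrow M$ and the sequence (\ref{d3}.3). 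Verifying that $\gc$-dimension is exactly preserved under reduction by such a regular element, and that the boundary cases genuinely force $\gkd_R(M)=\depth R-\depth_R(M)$, is the technical heart of the argument.
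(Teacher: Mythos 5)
The paper never proves this statement---it is imported wholesale from Golod's paper via the citation \cite[4,8]{G}---so what you are really attempting is a reconstruction of Golod's results; your outline follows the standard Auslander--Bridger-style route, but both of its genuinely hard steps are missing. In part (i) the induction collapses at its very first step $g=1$. Applying $(-)^{\triangledown}=\Hom_R(-,C)$ to $0\to M'\to G_0\to M\to 0$ yields the isomorphisms $\Ext^{i+1}_R(M,C)\cong\Ext^i_R(M',C)$ only for $i\geq 1$; in degree zero it yields only the exact sequence $0\to M^{\triangledown}\to G_0^{\triangledown}\to (M')^{\triangledown}\to\Ext^1_R(M,C)\to 0$. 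So for $g=1$ the induction hypothesis $(M')^{\triangledown}\neq 0$ proves nothing: $\Ext^1_R(M,C)$ is merely a quotient of $(M')^{\triangledown}$, and the isomorphism $\Ext^1_R(M,C)\cong (M')^{\triangledown}$ that your argument silently uses is false in general. What is actually needed is the nontrivial lemma that $\gkd_R(M)\leq 1$ together with $\Ext^1_R(M,C)=0$ forces $\gkd_R(M)=0$. This can be proved---dualize the now short exact sequence $0\to M^{\triangledown}\to G_0^{\triangledown}\to(M')^{\triangledown}\to 0$ once more, use that $\gc$-dimension zero is preserved by $(-)^{\triangledown}$ and by kernels of epimorphisms, and compare with the original sequence through the biduality maps and the five lemma---but nothing of this sort appears in your proposal.

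In part (ii) you correctly diagnose that the depth-lemma induction cannot settle the borderline case $g=1$, and your replacement---induction on $\depth R$, cutting down by an element $x$ regular on $R$, $C$ and $M$ simultaneously---is indeed the standard strategy. But the argument then rests on exactly the two points you leave open: (a) that $\gc$-dimension is unchanged in passing from $(R,C,M)$ to $(R/xR,\,C/xC,\,M/xM)$, and (b) the base case $\depth R=0$, where one must show that finiteness of $\gc$-dimension forces $\gkd_R(M)=0$. Your sketch for the base cases---exhibiting $\Ext^{\depth R}_R(M,C)\neq 0$ by a socle argument---misses the real difficulty: when $\depth R=0$ the module $\Ext^{0}_R(M,C)=\Hom_R(M,C)$ is nonzero for trivial reasons, and what must be proved is the \emph{vanishing} of all higher $\Ext^i_R(M,C)$, which is once again the hard lemma of the previous paragraph in disguise. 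Since you yourself label these points ``the technical heart,'' the proposal should be read as a correct outline of the standard proof whose genuinely difficult kernels are absent, not as a proof.
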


\section{Linkage and Serre conditions}
For ideals $\fa$ and $\fb$ in a Gorenstein local ring $R$ which are
linked by a Gorenstein ideal $\fc$, Schenzel in \cite[Theorem
4.1]{Sc} proved that $R/\fa$ satisfies $(S_r)$ if and only if
$\hh^i_\fm(R/\fb)=0$ for all $i$, $\dim R/\fb-r<i<\dim R/\fb$. In
order to develop this theory for modules, we will first generalize
the result \cite[Theorem 4.25]{AB} of Auslander and Bridger for
$\trk M$, the transpose of a module $M$ with respect to a
semidualizing module $C$ as in Proposition \ref{t1}. We then express
Schenzel's result for modules, over Cohen-Macaulay local rings with
canonical module, in two forms as in Proposition \ref{p3} and
Corollary \ref{c2}.

In another direction, we extend \cite[Theorem 3.3(i)]{Ka} of Kawasaki and \cite[Theorem 1.11]{KY}, of Khatami and Yassemi for modules in the Auslander class
with respect to a semidualizing module.

Throughout, we fix an $R$--module $C$ and denote $(-)^\triangledown$
as the dual functor $(-)^\triangledown:=\Hom_R(-,C)$. Two
$R$--modules $M$ and $N$ are said to be stably equivalent with
respect $C$, denoted $M \underset{C}{\approx} N$, if $C^n\oplus
M\cong C^m\oplus N$ for some non-negative integers $m$ and $n$. We
write $M \approx N$ when $M$ and $N$ are stably equivalent with
respect $R$. For $k>0$, the composition
$\mathcal{T}_k:=\Tr\Omega^{k-1}$ were introduced by Auslander and
Bridger in \cite{AB}.

\begin{rmk}\label{remark3}
\begin{enumerate}[(i)]
\item{Let $R$ be a semiperfect ring, $M$ an $R$--module. Assume that
$P_1\rightarrow P_0\rightarrow M\rightarrow0$ is the minimal
projective presentation of $M$. There exists a commutative diagram
$$\begin{CD}
&&&&&&&&\\
  \ \ &&&&  P^*_0\underset{R}{\otimes}C @>>>P^*_1\underset{R}{\otimes}C @>>> \Tr M\underset{R}{\otimes}C @>>>0&  \\
  &&&& @VV{\cong}V @VV{\cong}V \\
  \ \  &&&& P^{\triangledown}_0 @>>> P^{\triangledown}_1 @>>>\trk M @>>>0&\\
\end{CD}$$\\
with exact rows. Therefore, $\Tr M\underset{R}{\otimes}C\cong\trk
M$.}
\item{ For an $R$--module $M$ and positive integer $n$, there
is an exact sequence
$$0\rightarrow\Ext^n_R(M,R)\rightarrow\mathcal{T}_nM\rightarrow\lambda^2\mathcal{T}_nM\rightarrow0 \text{(see \cite[Remark 1.9]{DS})}.$$
Note that $\lambda^2\mathcal{T}_nM\approx\Omega\mathcal{T}_{n+1}M$.
Hence if $\Ext^n_R(M,R)=0$, then
$\mathcal{T}_nM\approx\Omega\mathcal{T}_{n+1}M$.}
\end{enumerate}
\end{rmk}

The proof of the following lemma is based on the proof of
\cite[Lemma 3.9]{AB}.
\begin{lem}\label{l2}
Let $C$ be a semidualizing $R$--module. If $$0\rightarrow
M_1\rightarrow M_2\rightarrow M_3\rightarrow0$$ is an exact sequence
of $R$--modules then there exists a natural long exact sequence
$$0\rightarrow M^{\triangledown}_3\rightarrow
M^{\triangledown}_2\rightarrow M^{\triangledown}_1\rightarrow \trk
M_3 \rightarrow\trk M_2\rightarrow\trk M_1\rightarrow0.$$
\end{lem}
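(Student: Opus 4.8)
The plan is to mimic the structure of the proof of \cite[Lemma 3.9]{AB}, building the long exact sequence from a comparison of projective presentations. First I would choose projective presentations of the three modules that are compatible with the given short exact sequence. Concretely, pick a projective presentation $P_1 \to P_0 \to M_1 \to 0$ and $Q_1 \to Q_0 \to M_3 \to 0$, and use the horseshoe lemma to assemble a projective presentation $P_1 \oplus Q_1 \to P_0 \oplus Q_0 \to M_2 \to 0$ so that the presentation of $M_2$ sits in a short exact sequence of presentations lying over the original sequence $0 \to M_1 \to M_2 \to M_3 \to 0$. This gives a $3 \times 3$ diagram whose columns are the three presentations and whose rows are (split) short exact sequences of projectives.

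Next I would apply the functor $(-)^{\triangledown} = \Hom_R(-,C)$ to this entire diagram. Because the rows $0 \to P_i \to P_i \oplus Q_i \to Q_i \to 0$ are split exact sequences of projectives, dualizing into $C$ keeps them exact, yielding short exact sequences of the dualized complexes. Passing to the definition of $\trk$ as the cokernel of the induced map on the dual presentations (see the exact sequence (\ref{d3}.1)), each column computes $M_i^{\triangledown}$ in low degree and $\trk M_i$ as the relevant cokernel. The snake lemma, applied to the short exact sequence of two-term dual complexes, then threads these together into the desired six-term exact sequence
$$0 \to M_3^{\triangledown} \to M_2^{\triangledown} \to M_1^{\triangledown} \to \trk M_3 \to \trk M_2 \to \trk M_1 \to 0.$$

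The main obstacle will be the bookkeeping that guarantees the sequence is \emph{natural} and that the outer terms vanish as claimed. Naturality requires checking that the connecting maps produced by the snake lemma are independent of the choices of presentations up to the isomorphisms identifying $\trk M_i$; here I would lean on the fact that $\trk M_i$ is well defined up to $C$--stable equivalence, and that the evaluation/dualization maps are functorial. The exactness at the two ends — injectivity of $M_3^{\triangledown} \to M_2^{\triangledown}$ and surjectivity onto $\trk M_1$ — follows from left exactness of $\Hom_R(-,C)$ and from $\trk M_1$ being a cokernel, respectively, so these are essentially formal once the diagram is set up.

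I expect the entire argument to be a routine diagram chase once the compatible presentations are in place; the only genuinely delicate point is verifying that the middle connecting homomorphism $M_1^{\triangledown} \to \trk M_3$ is well defined and natural, which is exactly where the semidualizing hypothesis on $C$ enters through the functorial identifications of the dual complexes.
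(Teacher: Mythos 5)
Your proposal is correct and is essentially the paper's own proof: the paper gives no argument beyond saying the lemma "is based on the proof of \cite[Lemma 3.9]{AB}," and that proof is precisely your horseshoe-presentation, dualize-the-split-rows, snake-lemma argument, with the middle term $\trk M_2$ being the transpose computed from the horseshoe presentation (legitimate, since $\trk$ is only defined up to the choice of presentation). One small correction: the semidualizing hypothesis on $C$ plays no role here --- the argument works verbatim for any $R$--module $C$, since it only uses left exactness of $\Hom_R(-,C)$ and exactness of $\Hom_R(-,C)$ on split sequences.
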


Let $C$ be a semidualizing $R$--module, $M$ an $R$--module. For a
generating set $\{f_1,f_2,\ldots,f_n\}$ of
$M^\triangledown=\Hom_R(M,C)$, denote $f:M\rightarrow C^n$ as the
map $(f_1,\ldots,f_n)$. It follows from (\ref{d3}.2) that $f$ is
injective if and only if $\Ext^1_R(\trk M,C)=0$. Note that when $f$
is injection, then there is an exact sequence
\begin{equation}\tag{\ref{l2}.1}
0\rightarrow M\overset{f}{\rightarrow} C^n\rightarrow N\rightarrow0,
\end{equation}
where $N=\coker(f)$. It is easy to see that, in this situations, the
exact sequence (\ref{l2}.1) is dual exact with respect to
$(-)^\triangledown$ and so $\Ext^1_R(N,C)=0$. Such an exact sequence
is called a \emph{universal pushforward of $M$ with respect to $C$}.

\begin{dfn}\cite{M1}\label{S}
An $R$--module $M$ is said to satisfy the property $\widetilde{S}_k$
if $\depth_{R_\fp} (M_\fp) \geq  \min\{ k, \depth R_\fp\}$  for all
$\fp\in\Spec R$.
\end{dfn}
Note that, for a horizontally linked module $M$ over a
Cohen-Macaulay local ring $R$, the properties $\widetilde{S}_k$ and
$(S_k)$ are identical.

For a positive integer $n$, a module $M$ is called an $n$th
$C$-syzygy module if there is an exact sequence $0\rightarrow
M\rightarrow C_1\rightarrow C_2\rightarrow\ldots\rightarrow C_n$,
where $C_i\cong\oplus^{m_i}C$ for some $m_i$.

Let $X$ be a subset of $\Spec R$. An $R$--module $M$ is said to be
of finite $\gc$--dimension on $X$, if $\gcpd(M_{\fp})<\infty$ for
all $\fp\in X$. We denote $X^n(R):=\{\fp\in\Spec(R)\mid \depth
R_\fp\leq n\}$.

Recall that an $R$--module $M$ is $n$-torsion free if $\Ext^i_R(\Tr
M,R)=0$ for all $1\leq i\leq n$. In \cite[Theorem 4.25]{AB},
Auslander and Bridger proved that an $R$--module $M$ of finite
Gorenstein dimension is $n$-torsion free if and only if $M$
satisfies $\widetilde{S}_n$ (See also \cite[Theorem 42]{M1}). In the
following we generalize this result in different directions.

\begin{prop}\label{t1}
Let $C$ be a semidualizing $R$--module and $M$ an $R$--module. For a
positive integer $n$, consider the following statements.
\begin{itemize}
      \item[(i)]$\Ext^i_R(\trk M,C)=0$ for all $i$, $1\leq i\leq n$.
      \item[(ii)]$M$ is an $n$th $C$-syszygy module.
      \item[(iii)]$M$ satisfies $\widetilde{S}_n$.
\end{itemize}
Then the following implications hold true.
\begin{itemize}
       \item[(a)] (i)$\Rightarrow$(ii)$\Rightarrow$(iii).
       \item[(b)] If $M$ has finite $\gc$--dimension on $X^{n-1}(R)$, then (iii)$\Rightarrow$(i).
\end{itemize}
\end{prop}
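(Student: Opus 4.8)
The plan is to prove the two implications of (a) by induction on $n$ via the universal pushforward, and to obtain (b) by a companion induction in which the pushforward cokernel is fed back into the hypothesis one level down. Two facts drive everything: a semidualizing module satisfies $\depth_{R_\fp}(C_\fp)=\depth R_\fp$, and $\gkd_R(C)=0$, so $\gkd_R(\trk C)=0$ by Remark \ref{rem1} and hence $\Ext^i_R(\trk(C^m),C)=0$ for all $i>0$. For (ii)$\Rightarrow$(iii) I would localize at an arbitrary $\fp$ and induct on $n$: splitting the defining sequence of an $n$th $C$-syzygy as $0\to M\to C_1\to M'\to0$ with $M'$ an $(n-1)$th $C$-syzygy, the depth inequality gives $\depth M_\fp\geq\min\{\depth(C_1)_\fp,\,\depth M'_\fp+1\}$; since $\depth(C_1)_\fp=\depth R_\fp$ and inductively $\depth M'_\fp\geq\min\{n-1,\depth R_\fp\}$, this collapses to $\min\{n,\depth R_\fp\}$, which is exactly $\widetilde{S}_n$.

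For (i)$\Rightarrow$(ii) the engine is the pushforward (\ref{l2}.1): from $\Ext^1_R(\trk M,C)=0$ I obtain a dual-exact sequence $0\to M\to C^m\to N\to0$ with $\Ext^1_R(N,C)=0$. Applying Lemma \ref{l2} to it and observing that the dual sequence $0\to N^\triangledown\to(C^m)^\triangledown\to M^\triangledown\to0$ is already exact (so the connecting map $M^\triangledown\to\trk N$ vanishes), the six-term sequence splits off
\[
0\longrightarrow\trk N\longrightarrow\trk(C^m)\longrightarrow\trk M\longrightarrow0.
\]
As $\Ext^i_R(\trk(C^m),C)=0$ for $i>0$, this yields the dimension shift $\Ext^i_R(\trk N,C)\cong\Ext^{i+1}_R(\trk M,C)$ for $i\geq1$. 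Thus the level-$n$ hypothesis for $M$ becomes the level-$(n-1)$ hypothesis for $N$, and splicing an $(n-1)$th $C$-syzygy sequence for $N$ onto the pushforward exhibits $M$ as an $n$th $C$-syzygy.

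For (b) I would again induct on $n$, exploiting that $\widetilde{S}_n$ together with finiteness of $\gc$-dimension on $X^{n-1}(R)$ implies the weaker level-$(n-1)$ hypotheses, so the inductive hypothesis already gives $\Ext^i_R(\trk M,C)=0$ for $1\leq i\leq n-1$; only $\Ext^n_R(\trk M,C)=0$ remains. Forming the pushforward $0\to M\to C^m\to N\to0$ and invoking the shift above reduces this to $\Ext^{n-1}_R(\trk N,C)=0$, which will follow from the inductive hypothesis once $N$ is shown to inherit both level-$(n-1)$ conditions. The bound $\widetilde{S}_{n-1}$ for $N$ comes from $\depth N_\fp\geq\min\{\depth M_\fp-1,\depth R_\fp\}$ combined with $\widetilde{S}_n$ for $M$. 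For the $\gc$-dimension, at $\fp\in X^{n-1}(R)$ finiteness and $\widetilde{S}_n$ force $\gkd_{R_\fp}(M_\fp)=0$ by Theorem \ref{G3}(ii); the localized, dual-exact pushforward then gives $\Ext^i_{R_\fp}(N_\fp,C_\fp)=0$ for all $i>0$, so $\gkd_{R_\fp}(N_\fp)=0$ as well, whence $N$ has finite $\gc$-dimension on $X^{n-2}(R)\subseteq X^{n-1}(R)$. The base case $n=1$ is the vanishing of $\Ext^1_R(\trk M,C)=\ker(M\to M^{\triangledown\triangledown})$ from (\ref{d3}.2): an associated prime of this kernel lies in $\Ass M$ with depth zero, hence in $X^0(R)$, where finite $\gc$-dimension and $\widetilde{S}_1$ force $C_\fp$-reflexivity and kill the kernel locally.

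The main obstacle is making the induction in (b) self-sustaining. The short exact sequence of transposes and the resulting Ext-shift are the mechanism, but the real work is verifying that the pushforward cokernel $N$ genuinely carries both the Serre condition $\widetilde{S}_{n-1}$ and the finite-$\gc$-dimension hypothesis one level down---in particular the passage $\gkd_{R_\fp}(M_\fp)=0\Rightarrow\gkd_{R_\fp}(N_\fp)=0$ on $X^{n-1}(R)$---so that the reduction can be iterated. Once that is secured, the remaining steps are routine localization-and-depth computations.
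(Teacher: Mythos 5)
Your proposal is correct, and its two harder components line up differently with the paper. Part (b) and the implication (ii)$\Rightarrow$(iii) follow essentially the paper's own argument: the same base case via associated primes of $\Ext^1_R(\trk M,C)$, which embeds in $M$ by (\ref{d3}.2), the same inductive step through the universal pushforward $0\to M\to C^m\to N\to 0$ and the induced short exact sequence of transposes, and the same depth count along the $C$-syzygy sequence. Where you genuinely diverge is (i)$\Rightarrow$(ii): the paper does not induct there at all; it takes a projective resolution of $M^{\triangledown}$, applies $(-)^{\triangledown}$ to obtain the complex $0\to M^{\triangledown\triangledown}\to(P_0)^{\triangledown}\to\cdots\to(P_{n-1})^{\triangledown}$, identifies $M\cong M^{\triangledown\triangledown}$ from (\ref{d3}.2), and uses $\Ext^i_R(\trk M,C)\cong\Ext^{i-2}_R(M^{\triangledown},C)$ from (\ref{d3}.1) to see this complex is exact, exhibiting $M$ as an $n$th $C$-syzygy in one stroke. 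Your pushforward induction buys uniformity -- the single engine of pushforward plus the shift $\Ext^i_R(\trk N,C)\cong\Ext^{i+1}_R(\trk M,C)$ drives both (a)(i)$\Rightarrow$(ii) and (b) -- and it records the correct middle term $\trk(C^m)$ in the transpose sequence, where the paper writes $C^m$; the paper's route is shorter for (a) because it never has to verify that $N$ inherits any hypotheses.

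Two steps in your part (b) need explicit patching, though both are repaired by material already in your write-up. First, the inference ``$\Ext^i_{R_\fp}(N_\fp,C_\fp)=0$ for all $i>0$, hence $\gkd_{R_\fp}(N_\fp)=0$'' is invalid without first knowing $\gkd_{R_\fp}(N_\fp)<\infty$, since Theorem \ref{G3}(i) computes $\gc$-dimension as the supremum of nonvanishing Ext only when that dimension is finite; this is supplied by noting that in $0\to M_\fp\to C^m_\fp\to N_\fp\to 0$ both $M_\fp$ and $C^m_\fp$ have $\gc$-dimension zero, whence $\gkd_{R_\fp}(N_\fp)\leq 1$. Second, the depth-lemma bound $\depth_{R_\fp}(N_\fp)\geq\min\{\depth_{R_\fp}(M_\fp)-1,\depth R_\fp\}$ combined with $\widetilde{S}_n$ for $M$ yields only $\depth_{R_\fp}(N_\fp)\geq\depth R_\fp-1$ at primes with $\depth R_\fp\leq n-1$, one short of $\widetilde{S}_{n-1}$; at exactly those primes you must instead invoke $\gkd_{R_\fp}(N_\fp)=0$, which you prove, together with Theorem \ref{G3}(ii), to get $\depth_{R_\fp}(N_\fp)=\depth R_\fp$. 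Once these two observations are wired together, your proof is complete.
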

\begin{proof}
(a).(i)$\Rightarrow$(ii) Applying $(-)^{\triangledown}:=\Hom_R(-,C)$
to a projective resolution $\cdots\rightarrow
P_{n-1}\rightarrow\cdots\rightarrow P_0\rightarrow
M^{\triangledown}\rightarrow0$ of $M^{\triangledown}$, implies the
complex $0\rightarrow M^{\triangledown\triangledown}\rightarrow
(P_0)^{\triangledown}\rightarrow\cdots\rightarrow
(P_{n-2})^{\triangledown}\rightarrow(P_{n-1})^{\triangledown}.$ Note
that the exact sequence (\ref{d3}.2) implies that $M$ is embedded in
$M^{\triangledown\triangledown}$ if $n=1$ and $M\cong
M^{\triangledown\triangledown}$ if $n>1$. Therefore $M$ is always
$1$st $C$--syzygy and, for $n=2$, $M$ is $2$nd $C$-syzygy. Assuming
$n>2$ implies $\Ext^i_R(\trk
M,C)\cong\Ext^{i-2}_R(M^\triangledown,C)$ for all $i$, $2<i\leq n$,
by the exact sequence (\ref{d3}.1). Therefore the complex
$0\rightarrow M^{\triangledown\triangledown}\rightarrow
(P_0)^{\triangledown}\rightarrow\cdots\rightarrow
(P_{n-2})^{\triangledown}\rightarrow(P_{n-1})^{\triangledown}$ is
exact, i.e. $M$ is an $n$th $C$-syzygy.

(ii)$\Rightarrow$(iii). By assumption there is an exact sequence
$0\longrightarrow M\longrightarrow
C_1\longrightarrow\cdots\longrightarrow C_n$, where $C_i=C^{l_i}$
for some positive integer $l_i$, $i=1, \cdots, n$. For any
$\fp\in\Spec R$, one has  $\depth_{R_\fp}
(M_\fp)\geq\min\{\depth_{R_\fp}(C_n)_\fp, n\}$. By \cite[page
63(1)]{G}, $\depth_{R_\fp}(C_n)_\fp=\depth_{R_\fp}(C_{\fp})=\depth
R_{\fp}$ for all $\fp\in\Spec R$ and so we get the result.

(b). We argue by induction on $n$. If $n=1$ then, by Theorem
\ref{G3}, $M$ is of $\gc$--dimension zero on $X^{0}(R)$ and so, by
Remark \ref{rem1}, $\trk M$ is of $\gc$--dimension zero on
$X^{0}(R)$. Hence $\Ext_R^1(\trk M, C)_\fp=0$ for all $\fp\in
X^{0}(R)$ by Theorem \ref{G3}(i). It is enough to show that
$\Ass_R(\Ext^1_R(\trk M,C))=\emptyset$. Assume contrarily that
$\fp\in\Ass_R(\Ext^1_R(\trk M,C))$. By the exact sequence
(\ref{d3}.2), $\depth_{R_\fp}(M_\fp)=0$. As $M$ satisfies
$\widetilde{S}_1$, $\fp\in X^{0}(R)$, which is a contradiction.

Now, let $n>1$. By Theorem \ref{G3},
$\g_{C_\fp}$-$\dim_{R_\fp}(M_\fp)= \depth
R_\fp-\depth_{R_\fp}(M_\fp)$ for all $\fp\in\X^{n-1}(R)$. As $M$
satisfies $\widetilde{S}_{n-1}$, $M$ is of $\gc$--dimension $0$ on
$\X^{n-1}(R)$ and so $\Ext^1_R(\trk M, C)=0$. As noted, after Lemma
\ref{l2}, there is a universal pushforward
\begin{equation}\tag{\ref{t1}.1}
0\longrightarrow M\longrightarrow C^m\longrightarrow
N\longrightarrow0.
\end{equation}
of $M$ with respect to $C$. As $\Ext^1_R(N,C)=0$, the exact sequence
(\ref{t1}.1) implies the exact sequence
\begin{equation}\tag{\ref{t1}.2}
0\longrightarrow\trk N\longrightarrow C^m\longrightarrow\trk M\longrightarrow0,
\end{equation}
by Lemma \ref{l2}. Note that $N$ has finite
$\gc$--dimension on $X^{n-1}(R)$. As $M$ satisfies $\widetilde{S}_n$
and $\Ext^1_R(N,C)=0$, it is easy to see that $N$ satisfies
$\widetilde{S}_{n-1}$. Induction hypothesis on $N$, gives that
$\Ext^i_R(\trk N,C)=0$ for all $i$, $1\leq i\leq n-1$. Finally, the
exact sequence (\ref{t1}.2) implies that $\Ext^i_R(\trk M,C)=0$ for
all $i$, $1\leq i\leq n$.
\end{proof}

As an application of Proposition \ref{t1}, we can find the effect of
$M\underset{R}{\otimes}\omega_R$ having the Serre condition $(S_n)$
on the vanishing of the local cohomology groups of $\lambda M$ when
$M$ is a horizontally linked module over a Cohen-Macaulay local ring
$R$ which admits a canonical module $\omega_R$.  This study brings,
in particular, some generalizations to the result of Schenzel
\cite[Theorem 4.1]{Sc}.
First we recall the Local Duality Theorem \cite[Corollary
3.5.9]{BH}.
\begin{thm}\label{th8}
Let $(R,\fm,k)$ be a Cohen-Macaulay local ring of dimension $d$ with
a canonical module $\omega_R$. Then for all finite $R$--modules $M$
and all integers $i$ there exist natural isomorphisms
$$\hh^i_{\fm}(M)\cong\Hom_R(\Ext^{d-i}_R(M,\omega_R),\E_R(k)),$$
where $\E_R(k)$ is the injective envelope of $k$.
\end{thm}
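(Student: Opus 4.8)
The plan is to reduce the statement to the single natural isomorphism $\hh^i_\fm(M)\cong\Ext^{d-i}_R(M,\omega_R)^{\vee}$, where I abbreviate the Matlis dual by $(-)^{\vee}:=\Hom_R(-,\E_R(k))$, and then recognize both sides as covariant connected sequences of functors ($\delta$-functors) in $M$ that are forced to agree. First I would record that $(-)^{\vee}$ is exact, since $\E_R(k)$ is injective. Local cohomology $\hh^{\bullet}_\fm(-)$ is the right derived functor of $\Gamma_\fm$, so a short exact sequence $0\to M'\to M\to M''\to0$ yields its usual long exact sequence; dually, applying the exact functor $(-)^{\vee}$ to the $\Ext^{\bullet}_R(-,\omega_R)$ long exact sequence produces a long exact sequence of the modules $\Ext^{\bullet}_R(-,\omega_R)^{\vee}$. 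Reindexing by $k:=d-i$, both $\tilde T^{k}(M):=\hh^{d-k}_\fm(M)$ and $\tilde U^{k}(M):=\Ext^{k}_R(M,\omega_R)^{\vee}$ become covariant \emph{homological} $\delta$-functors (connecting maps lowering $k$ by one), defined for all $k\ge0$ and vanishing for $k>d$.

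The anchor of the argument is the agreement in bottom degree $k=0$, i.e. in top cohomological degree $i=d$. Here $\tilde T^{0}=\hh^d_\fm(-)$ and $\tilde U^{0}=\Hom_R(-,\omega_R)^{\vee}$, and I would invoke the defining representability property of the canonical module: the functor $M\mapsto\hh^d_\fm(M)^{\vee}$ is naturally isomorphic to $\Hom_R(-,\omega_R)$, equivalently $\hh^d_\fm(M)\cong\Hom_R(M,\omega_R)^{\vee}$ naturally in $M$ (this is exactly the content of the theory of canonical modules over a Cohen--Macaulay local ring, e.g.\ \cite[Theorem 3.3.10]{BH}). Note also that both degree-zero functors are right exact, as they must be: $\hh^d_\fm(-)$ is the top local cohomology, and $\Hom_R(-,\omega_R)^{\vee}$ is a left-exact contravariant functor composed with the exact contravariant $(-)^{\vee}$.

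Next I would establish universality by coeffaceability. The category of finite $R$-modules has enough projectives (finite free modules), and on a free module $F\cong R^{m}$ both families vanish in positive degree: $\tilde T^{k}(F)=\hh^{d-k}_\fm(R)^{m}=0$ for $k>0$ because $R$ is Cohen--Macaulay of depth $d$, and $\tilde U^{k}(F)=\Ext^{k}_R(R,\omega_R)^{\vee}{}^{\oplus m}=0$ for $k>0$ since $R$ is free. Hence each positive-degree functor is coeffaceable, so $\{\tilde T^{k}\}$ and $\{\tilde U^{k}\}$ are both universal homological $\delta$-functors. Two universal $\delta$-functors whose degree-zero terms are naturally isomorphic are themselves naturally isomorphic; transporting the iso of bottom terms upward through the standard dimension-shifting along $0\to K\to F\to M\to0$ gives $\tilde T^{k}\cong\tilde U^{k}$ for all $k$, which is precisely $\hh^i_\fm(M)\cong\Ext^{d-i}_R(M,\omega_R)^{\vee}=\Hom_R\!\big(\Ext^{d-i}_R(M,\omega_R),\E_R(k)\big)$.

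The genuinely nontrivial input is the top-degree statement that $\omega_R$ represents $M\mapsto\hh^d_\fm(M)^{\vee}$, together with the naturality of that isomorphism; everything after it is the formal machinery of universal $\delta$-functors plus the two elementary vanishings above. So the hard part is really imported from the theory of the canonical module rather than produced here, and I would either cite it or, for a self-contained account, derive it from the minimal injective resolution of $\omega_R$ (whose top term is $\E_R(k)$, giving $\hh^d_\fm(\omega_R)\cong\E_R(k)$ and the duality pairing). I would also note the one-line derived-category alternative, if that language is admissible: since $\omega_R$ is dualizing one has biduality $M\cong\R\Hom_R(\R\Hom_R(M,\omega_R),\omega_R)$, and combining the compatibility $\R\Gamma_\fm\,\R\Hom_R(N,\omega_R)\cong\R\Hom_R(N,\R\Gamma_\fm(\omega_R))$ for finite $N$ with $\R\Gamma_\fm(\omega_R)\cong\E_R(k)[-d]$ yields $\R\Gamma_\fm(M)\cong\R\Hom_R(M,\omega_R)^{\vee}[-d]$ at once; taking $i$-th cohomology recovers the claim.
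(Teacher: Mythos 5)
You should know at the outset that the paper offers no proof of Theorem \ref{th8}: it is the classical Local Duality Theorem, recalled verbatim with the citation \cite[Corollary 3.5.9]{BH}, so there is no internal argument to compare yours against; the only question is whether your proof stands on its own. Structurally it does, and it is essentially the standard argument behind the cited result: with $(-)^{\vee}=\Hom_R(-,\E_R(k))$, the families $\{\hh^{d-k}_{\fm}(-)\}_{k\ge 0}$ and $\{\Ext^{k}_R(-,\omega_R)^{\vee}\}_{k\ge 0}$ are covariant homological $\delta$-functors on finite $R$-modules, both vanish in positive degrees on finite free modules (the first because $\depth R=d$, the second trivially), hence both are coeffaceable and therefore universal, and the degree-zero isomorphism propagates to all degrees; naturality, which is part of the assertion, comes for free from the formalism, and the degenerate ranges $i<0$ and $i>d$ are handled by the vanishing you note.

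Two caveats, one cosmetic and one you should repair. First, your primary formulation of the base case, that $\hh^d_{\fm}(-)^{\vee}$ is naturally isomorphic to $\Hom_R(-,\omega_R)$, is false unless $R$ is complete, and it is not ``equivalent'' to your second formulation: Matlis biduality carries a finite module to its $\fm$-adic completion, so the true statement $\hh^d_{\fm}(M)\cong\Hom_R(M,\omega_R)^{\vee}$ only yields $\hh^d_{\fm}(M)^{\vee}\cong\Hom_R(M,\omega_R)\otimes_R\widehat{R}$ (for $M=R$ this is $\omega_{\widehat{R}}$, not $\omega_R$). Your $\delta$-functor argument uses only the correct form, so this is a slip of wording, not of logic. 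Second, as a matter of non-circularity: the fact you import is precisely the $i=d$ instance of the theorem being proved, and \cite[Theorem 3.3.10]{BH} (duality for maximal Cohen--Macaulay modules) does not assert it. You should either cite a source that establishes top-degree duality independently, or prove it: in the complete case it follows from $\omega_{\widehat{R}}\cong\hh^d_{\fm}(R)^{\vee}$ together with a finite free presentation and left-exactness of the two contravariant functors $\Hom_R(-,\omega_R)$ and $\hh^d_{\fm}(-)^{\vee}$; the general case then descends along $R\to\widehat{R}$ using $\hh^i_{\fm}(M)\cong\hh^i_{\widehat{\fm}}(\widehat{M})$ and the adjunction $\Hom_{\widehat{R}}(\widehat{N},\E_R(k))\cong\Hom_R(N,\E_R(k))$ for finite $N$. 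With that ingredient secured, your proof (and the derived-category variant you sketch) is complete.
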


\begin{prop}\label{p3}
Let $R$ be a Cohen-Macaulay local ring of dimension $d$ with
canonical module $\omega_R$, $M$ a horizontally linked $R$--module.
Suppose that $M\underset{R}{\otimes}\omega_R$ satisfies $(S_1)$.
Then, for a positive integer $n$, the following statements are
equivalent.
\begin{itemize}
             \item[(i)]$\lambda M$ satisfies $(S_n)$.
              \item[(ii)] $\hh^i_{\fm}(M\underset{R}{\otimes}\omega_R)=0$ for all $i$, $d-n<i<d$.
\end{itemize}
\end{prop}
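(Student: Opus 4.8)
The plan is to translate each side into the vanishing of Ext modules, and then to pass between $\lambda M$ and $M\otimes_R\omega_R$ through the linkage sequence (\ref{d1}.2). Set $C=\omega_R$, so that $(-)^\triangledown=\Hom_R(-,\omega_R)$; recall that $\omega_R$ is a semidualizing module of finite injective dimension and, being canonical, $\gkd_R(N)<\infty$ for \emph{every} finite $R$--module $N$. Since $M\cong\lambda^2M$, applying $\lambda$ shows $\lambda M\cong\lambda^2(\lambda M)$, so $\lambda M$ is again horizontally linked, hence stable; and over the Cohen--Macaulay local ring $R$ the conditions $(S_n)$ and $\widetilde{S}_n$ coincide for it. As $\gkd_R(\lambda M)<\infty$ on all of $\Spec R$, both implications of Proposition \ref{t1} apply to $\lambda M$ and give
\[
\text{(i)}\iff \Ext^i_R(\mathsf{Tr}_{\omega_R}(\lambda M),\omega_R)=0\quad(1\le i\le n).
\]
For (ii), Local Duality (Theorem \ref{th8}) together with the faithfulness of Matlis duality gives $\hh^i_\fm(M\otimes_R\omega_R)=0\iff\Ext^{d-i}_R(M\otimes_R\omega_R,\omega_R)=0$; letting $i$ range over $d-n<i<d$ this yields
\[
\text{(ii)}\iff \Ext^j_R(M\otimes_R\omega_R,\omega_R)=0\quad(1\le j\le n-1).
\]

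Next I would build the bridge between $\mathsf{Tr}_{\omega_R}(\lambda M)$ and $M\otimes_R\omega_R$. Because $\lambda M$ is stable with $\lambda^2 M\cong M$, the sequence (\ref{d1}.2) for $\lambda M$ reads $0\to M\to Q\to \Tr(\lambda M)\to 0$ with $Q$ free. Tensoring with $\omega_R$ and invoking $\Tr(\lambda M)\otimes_R\omega_R\cong\mathsf{Tr}_{\omega_R}(\lambda M)$ (Remark \ref{remark3}(i)) produces
\[
0\to Z\to M\otimes_R\omega_R\to\omega_R^{\,r}\to\mathsf{Tr}_{\omega_R}(\lambda M)\to 0,\qquad Z:=\Tor^R_1(\Tr(\lambda M),\omega_R),
\]
which I split as $0\to I\to\omega_R^{\,r}\to\mathsf{Tr}_{\omega_R}(\lambda M)\to 0$ and $0\to Z\to M\otimes_R\omega_R\to I\to 0$. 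Applying $\Hom_R(-,\omega_R)$ to the first split sequence and using $\Ext^{>0}_R(\omega_R,\omega_R)=0$ gives $\Ext^i_R(\mathsf{Tr}_{\omega_R}(\lambda M),\omega_R)\cong\Ext^{i-1}_R(I,\omega_R)$ for $i\ge 2$; applying it to the second identifies $\Ext^{i-1}_R(I,\omega_R)$ with $\Ext^{i-1}_R(M\otimes_R\omega_R,\omega_R)$ up to the terms $\Ext^{i-1}_R(Z,\omega_R)$ and $\Ext^{i-2}_R(Z,\omega_R)$. Granting that these correction terms vanish, one obtains the index shift
\[
\Ext^i_R(\mathsf{Tr}_{\omega_R}(\lambda M),\omega_R)\cong\Ext^{i-1}_R(M\otimes_R\omega_R,\omega_R)\qquad(2\le i\le n),
\]
which matches the two displayed Ext--conditions on the overlapping range, namely $2\le i\le n$ against $1\le j\le n-1$.

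The main obstacle is precisely the control of the boundary index $i=1$ and of the correction module $Z$, and this is exactly where the hypothesis that $M\otimes_R\omega_R$ satisfies $(S_1)$ enters. Since $Z$ is a submodule of $M\otimes_R\omega_R$, its associated primes lie among those of $M\otimes_R\omega_R$, which by $(S_1)$ are all minimal in $\Supp_R(M\otimes_R\omega_R)$; I expect this to force the relevant $\Ext^j_R(Z,\omega_R)$ to vanish in the pertinent degrees, and—together with the sequence $0\to I\to\omega_R^{\,r}\to\mathsf{Tr}_{\omega_R}(\lambda M)\to 0$, whose middle term is maximal Cohen--Macaulay—to yield $\Ext^1_R(\mathsf{Tr}_{\omega_R}(\lambda M),\omega_R)=0$, i.e. that $\lambda M$ is a first $\omega_R$--syzygy. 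Once these two points are secured, the two Ext--conditions, and hence (i) and (ii), are equivalent. I anticipate that the delicate verification that $(S_1)$ annihilates the contribution of $Z$—equivalently, that the natural map $M\otimes_R\omega_R\to\omega_R^{\,r}$ is injective in codimension zero along $\Supp_R M$—will be the technical heart of the proof.
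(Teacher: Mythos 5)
Your reductions of (i) and (ii) to $\Ext$--vanishing are correct: $\lambda M$ is indeed horizontally linked (since $\lambda^2(\lambda M)\cong\lambda(\lambda^2M)\cong\lambda M$), both directions of Proposition \ref{t1} apply to it because $\omega_R$ is canonical, and the local duality translation is standard. The two points you defer, however, are not routine bookkeeping, and the mechanism you offer for the first one fails. For the correction module $Z=\Tor_1^R(\Tr(\lambda M),\omega_R)$: the inclusion $\Ass_R(Z)\subseteq\Min R$ does \emph{not} force the vanishing you need. On the contrary, if $Z\neq0$ and all its associated primes are minimal, then $\dim_R(Z)=d$, so already the first correction term $\Hom_R(Z,\omega_R)$ (the one entering at $i=2$) is \emph{nonzero} by local duality; and intermediate $\Ext$'s need not vanish either (e.g. $(x,y)\subseteq k[[x,y,z]]$ has all associated primes minimal but $\Ext^1_R((x,y),R)\neq0$). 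So "granting that the correction terms vanish" is literally equivalent to granting $Z=0$, which is what must be proved, and $(S_1)$ alone does not do it. The missing ingredients are: (1) $\lambda M$ is itself horizontally linked, hence $\Ext^1_R(\Tr(\lambda M),R)=0$ by Theorem \ref{MS}; (2) each $\fp\in\Ass_R(Z)\subseteq\Min R$ (minimal primes of $\Supp_R(M)$ lie in $\Min R$ because $M$, being horizontally linked, is a syzygy) has $R_\fp$ Artinian with $\omega_{R_\fp}\cong\E_{R_\fp}(k(\fp))$, so Matlis duality gives $Z_\fp\cong\Hom_{R_\fp}\bigl(\Ext^1_{R_\fp}((\Tr(\lambda M))_\fp,R_\fp),\omega_{R_\fp}\bigr)=0$; together these yield $Z=0$.

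The boundary step has the same character: your route to $\Ext^1_R(\Tr_{\omega_R}(\lambda M),\omega_R)=0$ via the sequence $0\to I\to\omega_R^r\to\Tr_{\omega_R}(\lambda M)\to0$ cannot work, since dualizing it only exhibits this $\Ext$ as $\coker\bigl(\Hom_R(\omega_R^r,\omega_R)\to\Hom_R(I,\omega_R)\bigr)$. The correct argument uses (\ref{d3}.2) for $\lambda M$, which embeds $\Ext^1_R(\Tr_{\omega_R}(\lambda M),\omega_R)$ into $\lambda M$; as $\lambda M$ is a syzygy, its associated primes lie in $\Ass R=\Min R$, where $\omega_{R_\fp}$ is injective, so this $\Ext$ vanishes. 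With these two repairs your argument does close, and it is genuinely different from the paper's, which never meets either difficulty: the paper transposes $\Tr M$ rather than $\lambda M$, so Remark \ref{remark3}(i) plus stability ($M\cong\Tr\Tr M$) give $\Tr_{\omega_R}(\Tr M)\cong M\otimes_R\omega_R$ outright, with no $\Tor$ correction; it spends $(S_1)$ once, through (\ref{1.2}), to obtain $\Ext^1_R(\Tr M,\omega_R)=0$, which is exactly what makes "$\lambda M$ satisfies $\widetilde{S}_n$ $\Leftrightarrow$ $\Tr M$ satisfies $\widetilde{S}_{n-1}$" valid, and then Proposition \ref{t1} applied to $\Tr M$ and local duality finish. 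As submitted, your proposal leaves its self-described technical heart unproved, and the heuristic offered for it is invalid, so the proof is incomplete.
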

\begin{proof}
By writing (\ref{1.2}), in terms of $\omega_R$, we may consider
$\Ext^1_R(\Tr M,\omega_R)$ as a submodule of
$M\underset{R}{\otimes}\omega_R$. From the fact that
$M\underset{R}{\otimes}\omega_R$ satisfies $(S_1)$, it follows that
$\Ass_R(\Ext^1_R(\Tr M,\omega_R))\subseteq\Min R$ which gives
$\Ext^1_R(\Tr M,\omega_R)=0$.

Note that, as $\Ass_R(\lambda M)\subseteq\Ass R$,  $\lambda M$
satisfies $(S_n)$ if and only if $\lambda M$ satisfies the condition
$\widetilde{S}_n$ which in turn is equivalent to say $\Tr M$
satisfies $\widetilde{S}_{n-1}$ because $\lambda M$ is the first
syzygy of $\Tr M$ and $\Ext^1_R(\Tr M,\omega_R)=0$. Thus, over the
Cohen-Macaulay ring $R$ and by Proposition \ref{t1}, the statement
(i) is equivalent to
\begin{equation}\tag{\ref{p3}.1}
\Ext^i_R(\Tr_{\omega_R}(\Tr M),\omega_R)=0 \text{ for all } i, 1\leq
i\leq n-1.
\end{equation}
On the other hand, Remark \ref{remark3}(i) shows that
$\Tr_{\omega_R}(\Tr M)\cong\Tr\Tr M\underset{R}{\otimes}\omega_R$.
By Theorem \ref{MS}, $M$ is stable, and so $M\cong\Tr\Tr M$ .
Therefore, $\Tr_{\omega_R} (\Tr M)\cong
M\underset{R}{\otimes}\omega_R$. Hence $\lambda M$ satisfies $(S_n)$
if and only if $\Ext^i_R(M\underset{R}{\otimes}\omega_R,\omega_R)=0$
for all $i$, $1\leq i\leq n-1$, which is also equivalent to say that
$\hh^i_{\fm}(M\underset{R}{\otimes}\omega_R)=0$ for all $i$,
$d-n<i<d$, by the Local Duality Theorem \ref{th8}.
\end{proof}
To achieve another generalization related to \cite{Sc}, we need the
following which is analogous to Proposition \ref{t1}.
\begin{prop}\label{t13}
Let $C$ be a semidualizing $R$--module and $M$ an $R$--module. For a
positive integer $n$, consider the following statements.
\begin{itemize}
      \item[(i)]$\Ext^i_R(\Tr M,C)=0$ for all $i$, $1\leq i\leq n$.
      \item[(ii)]$M\underset{R}{\otimes}C$ is an $n$th $C$-syzygy.
      \item[(iii)]$M\underset{R}{\otimes}C$ satisfies $\widetilde{S}_n.$
\end{itemize}
Then the following statements hold true.
\begin{itemize}
         \item[(a)](i)$\Rightarrow$(ii)$\Rightarrow$(iii).
          \item[(b)] If $C$ has finite injective dimension on $X^{n-1}(R)$, then (iii) implies (i).
\end{itemize}
\end{prop}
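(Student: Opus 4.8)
The plan is to deduce everything from Proposition \ref{t1} applied to the single module $M\underset{R}{\otimes}C$. Two remarks make this feasible. First, statements (ii) and (iii) of Proposition \ref{t13} are verbatim statements (ii) and (iii) of Proposition \ref{t1} for the module $M\underset{R}{\otimes}C$; in particular the implication (ii)$\Rightarrow$(iii) of part (a) is then free, and all that remains is to translate the cohomological condition (i). Second, Proposition \ref{t1}(i) for $M\underset{R}{\otimes}C$ is the assertion $\Ext^i_R(\trk(M\underset{R}{\otimes}C),C)=0$, which by Remark \ref{remark3}(i) is consonant with condition (i) of Proposition \ref{t13} since $\trk(\Tr M)\cong\Tr(\Tr M)\underset{R}{\otimes}C\cong M\underset{R}{\otimes}C$ up to $C$-summands. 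Thus the whole proposition follows once I establish the natural isomorphism
\begin{equation}\tag{$\ast$}
\Ext^i_R(\Tr M,C)\cong\Ext^i_R(\trk(M\underset{R}{\otimes}C),C)\quad\text{for all }i\geq1.
\end{equation}

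I would prove $(\ast)$ as the technical heart. The basic identity is $(M\underset{R}{\otimes}C)^\triangledown\cong M^*$, which comes from Hom-tensor adjointness together with the defining isomorphism $R\cong\Hom_R(C,C)$ of a semidualizing module (Definition \ref{d3}): indeed $\Hom_R(M\underset{R}{\otimes}C,C)\cong\Hom_R(M,\Hom_R(C,C))\cong\Hom_R(M,R)=M^*$. For $i\geq3$ both sides of $(\ast)$ reduce to $\Ext^{i-2}_R(M^*,C)$ by a dimension shift: on the right, the exact sequence (\ref{d3}.1) for $X=M\underset{R}{\otimes}C$ gives $\Ext^i_R(\trk X,C)\cong\Ext^{i-2}_R(X^\triangledown,C)=\Ext^{i-2}_R(M^*,C)$, because the terms $P_0^\triangledown,P_1^\triangledown$ are direct sums of copies of $C$ and hence $\Ext^{\geq1}_R(-,C)$-acyclic; on the left, the sequences (\ref{d1}.1) and (\ref{d1}.2) identify $M^*$ with $\Omega^2\Tr M$ up to projectives, so that $\Ext^i_R(\Tr M,C)\cong\Ext^{i-2}_R(M^*,C)$ for $i\geq3$. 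For $i=1,2$ I would compare the two four-term exact sequences (\ref{1.2}) (with $N=C$) and (\ref{d3}.2) (for $X=M\underset{R}{\otimes}C$): under the identification $(M\underset{R}{\otimes}C)^{\triangledown\triangledown}\cong\Hom_R(M^*,C)$, both sequences realize $\Ext^1$ and $\Ext^2$ as the kernel and cokernel of the same canonical map. Checking that the evaluation map $e_M^C$ of (\ref{1.2}) really coincides with the $C$-biduality map of $M\underset{R}{\otimes}C$ under this identification, by naturality of evaluation, is the step I expect to be the main obstacle; once it is settled, $(\ast)$ holds in every positive degree.

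With $(\ast)$ in hand, part (a) is immediate: Proposition \ref{t1}(a) for $M\underset{R}{\otimes}C$ yields (i)$\Rightarrow$(ii)$\Rightarrow$(iii) after rewriting its condition (i) through $(\ast)$. For part (b) I must verify the hypothesis of Proposition \ref{t1}(b) for $M\underset{R}{\otimes}C$, namely that $M\underset{R}{\otimes}C$ has finite $\gc$-dimension on $X^{n-1}(R)$. This is where the injective-dimension assumption enters: for $\fp\in X^{n-1}(R)$ the localization $C_\fp$ is a semidualizing $R_\fp$-module of finite injective dimension, hence a canonical module of $R_\fp$, and over a local ring admitting a canonical module every finite module has finite $\gc$-dimension (\cite[Proposition 1.3]{Ge}); in particular $(M\underset{R}{\otimes}C)_\fp$ does. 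Applying Proposition \ref{t1}(b) to $M\underset{R}{\otimes}C$ then gives $\Ext^i_R(\trk(M\underset{R}{\otimes}C),C)=0$ for $1\leq i\leq n$, and $(\ast)$ converts this into statement (i).
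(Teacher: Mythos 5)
Your proposal follows essentially the same route as the paper's own proof: the paper likewise sets $N=M\otimes_RC$, uses $N^{\triangledown}\cong M^*$ to compare the four-term sequences (\ref{1.2}) and (\ref{d3}.2) in a commutative diagram and deduce $\Ext^i_R(\trk N,C)\cong\Ext^i_R(\Tr M,C)$ for $i=1,2$, obtains the same isomorphism for $i>2$ by the identical dimension shift through (\ref{d1}.1), (\ref{d1}.2) and (\ref{d3}.1), and then concludes by applying Proposition \ref{t1} to $N$. The one point where you are more explicit than the paper is in part (b): the paper silently replaces its injective-dimension hypothesis by the $\gc$-dimension hypothesis of Proposition \ref{t1}(b), whereas you spell out the justification (for $\fp\in X^{n-1}(R)$, a semidualizing module $C_\fp$ of finite injective dimension is a canonical module, so every finite $R_\fp$-module has finite $\g_{C_\fp}$-dimension by the cited result of Gerko), which is exactly the intended argument.
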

\begin{proof}
Set $N=M\underset{R}{\otimes}C$. Note that as $C$ is semidualizing,
there are natural isomorphisms $N^\triangledown\cong M^*$ and
$N^{\triangledown\triangledown}\cong M^{*\triangledown}$. Therefore,
from the exact sequences (\ref{1.2}) and (\ref{d3}.2), one obtains
the commutative diagram

$$\begin{CD}
0\rightarrow\Ext^1_R(\Tr M,C)@>>>N@>>>\Hom_R(M^*,C)@>>>\Ext^2_R(\Tr M,C)\rightarrow0  \\
@VVV@VV{\parallel}V@VV{\|\wr}V@VVV \\
0\rightarrow\Ext^1_R(\trk(N),C)@>>>N@>>>(N)^{\triangledown\triangledown}
@>>>\Ext^2_R(\trk(N),C)\rightarrow0\\
\end{CD}
$$
\\
with exact rows, and so $\Ext^i_R(\trk(N),C)\cong\Ext^i_R(\Tr M,C)$
for $i=1, 2$.

It follows from the exact sequences (\ref{d1}.1), (\ref{d1}.2),  and
(\ref{d3}.1) that
 \[\begin{array}{rl}
\Ext^i_R(\Tr M,C)&\cong\Ext^{i-2}_R(M^*,C)\\
&\cong\Ext^{i-2}_R(N^\triangledown, C)\\
&\cong\Ext^i_R(\trk(N),C)
\end{array}\]
for all $i>2$. Now, by replacing $M$ by $N$ in the Proposition
\ref{t1}, the assertion follows.
\end{proof}
Now we are ready to give another generalization of \cite[Theorem
4.1]{Sc}.
\begin{cor}\label{c2}
Let $R$ be Cohen-Macaulay local ring of dimension $d$ with canonical
module $\omega_R$, $M$ a horizontally linked $R$--module such that
$M\underset{R}{\otimes}\omega_R$ satisfies $(S_1)$. For a positive
integer $n$, the following statements are equivalent.
\begin{itemize}
       \item[(i)]$M\underset{R}{\otimes}\omega_R$ satisfies $(S_n)$.
        \item[(ii)]$\hh^i_\fm(\lambda M)=0$ for all $i$, $d-n<i<d$.
\end{itemize}
\end{cor}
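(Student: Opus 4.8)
The plan is to derive all the equivalences from Proposition \ref{t13} applied with $C=\omega_R$, supplemented by the Local Duality Theorem \ref{th8} and the syzygy sequence (\ref{d1}.2). Since $R$ is Cohen-Macaulay with canonical module $\omega_R$, the module $\omega_R$ has finite injective dimension, and in particular finite injective dimension on $X^{n-1}(R)$; so all three conditions of Proposition \ref{t13} are equivalent. In particular, $M\underset{R}{\otimes}\omega_R$ satisfies $\widetilde{S}_n$ if and only if $\Ext^i_R(\Tr M,\omega_R)=0$ for all $i$, $1\leq i\leq n$. The remaining work is to connect the hypothesis (i) about $(S_n)$ to $\widetilde{S}_n$, and the Ext-vanishing to the local cohomology statement (ii).

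First I would reconcile $(S_n)$ with $\widetilde{S}_n$ for $N:=M\underset{R}{\otimes}\omega_R$; unlike in Proposition \ref{p3}, this module is not itself horizontally linked, so one cannot directly invoke the remark following Definition \ref{S}. Because $M$ is horizontally linked, it is a syzygy module by Theorem \ref{MS}, hence embeds in a free module, so $\Ass_R M\subseteq\Ass_R R=\Min R$, the last equality holding as $R$ is Cohen-Macaulay. Consequently every minimal prime of $\Supp_R M$ lies in $\Min R$, and the equidimensionality and catenarity of the Cohen-Macaulay ring force $\dim_{R_\fp}(M_\fp)=\dim R_\fp$ for every $\fp\in\Supp_R M$. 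Since the localized canonical module $(\omega_R)_\fp\cong\omega_{R_\fp}$ is faithful, one has $\Supp_R N=\Supp_R M$ and $\dim_{R_\fp}(N_\fp)=\dim_{R_\fp}(M_\fp)=\dim R_\fp$. Thus the defining inequalities of $(S_n)$ and of $\widetilde{S}_n$ literally coincide on $\Supp_R N$ and are vacuous off it, so (i) is equivalent to ``$N$ satisfies $\widetilde{S}_n$'', hence to $\Ext^i_R(\Tr M,\omega_R)=0$ for $1\leq i\leq n$.

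Next I would translate this Ext-vanishing into statement (ii). As in the opening of the proof of Proposition \ref{p3}, the hypothesis that $N$ satisfies $(S_1)$ together with the embedding $\Ext^1_R(\Tr M,\omega_R)\hookrightarrow N$ coming from (\ref{1.2}) forces $\Ass_R(\Ext^1_R(\Tr M,\omega_R))\subseteq\Min R$ and hence $\Ext^1_R(\Tr M,\omega_R)=0$ automatically. So the condition reduces to the range $2\leq i\leq n$. Applying $\Hom_R(-,\omega_R)$ to the exact sequence (\ref{d1}.2) and using that $P_1^*$ is projective yields natural isomorphisms $\Ext^j_R(\lambda M,\omega_R)\cong\Ext^{j+1}_R(\Tr M,\omega_R)$ for all $j\geq1$; therefore the conditions become $\Ext^j_R(\lambda M,\omega_R)=0$ for $1\leq j\leq n-1$. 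Finally, Local Duality (Theorem \ref{th8}) gives $\hh^i_\fm(\lambda M)\cong\Hom_R(\Ext^{d-i}_R(\lambda M,\omega_R),\E_R(k))$, so under the substitution $j=d-i$ the range $1\leq j\leq n-1$ becomes exactly $d-n<i<d$, and the conditions coincide with $\hh^i_\fm(\lambda M)=0$ for $d-n<i<d$, which is (ii).

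I expect the only genuinely delicate point to be the equivalence of $(S_n)$ and $\widetilde{S}_n$ for $M\underset{R}{\otimes}\omega_R$, which rests on the full-dimensionality of the support of a syzygy module over a Cohen-Macaulay (hence equidimensional) local ring. Here I would be careful that localizing the canonical module preserves faithfulness, so that $\Supp_R N=\Supp_R M$ and the dimension computation $\dim_{R_\fp}(N_\fp)=\dim R_\fp$ goes through for every $\fp$ in the support. Everything else is essentially bookkeeping: the automatic vanishing of $\Ext^1_R(\Tr M,\omega_R)$ from the $(S_1)$ hypothesis, the dimension-shifting isomorphism extracted from (\ref{d1}.2), and the index translation supplied by Local Duality.
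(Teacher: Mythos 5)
Your proof is correct and follows essentially the same route as the paper's: the paper's own proof likewise deduces $\Ext^1_R(\Tr M,\omega_R)=0$ from the $(S_1)$ hypothesis as in Proposition \ref{p3}, and then declares the assertion ``clear'' from Proposition \ref{t13} together with the Local Duality Theorem \ref{th8}. The only difference is one of explicitness: you spell out the details the paper suppresses, namely the reconciliation of $(S_n)$ with $\widetilde{S}_n$ for $M\otimes_R\omega_R$ via the syzygy property of $M$, and the dimension shift $\Ext^j_R(\lambda M,\omega_R)\cong\Ext^{j+1}_R(\Tr M,\omega_R)$ coming from the sequence (\ref{d1}.2).
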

\begin{proof}
 As in the proof of Proposition \ref{p3}, the fact that $M\underset{R}{\otimes}\omega_R$ satisfies $(S_1)$ implies that $\Ext^1_R(\Tr
M,\omega_R)=0$. Now the assertion is clear by Proposition \ref{t13}
and the Local Duality Theorem \ref{th8}.
\end{proof}
Let $R$ be a Cohen-Macaulay local ring with canonical module
$\omega_R$ and let $M$ be an $R$--module of finite projective
dimension. In \cite[Theorem 3.3(i)]{Ka}, Kawasaki proved that $M$ is
Cohen-Macaulay if and only if $M\underset{R}{\otimes}\omega_R$ is.
In \cite[Theorem 1.11]{KY}, Khatami and Yassemi generalized this
result for modules of finite Gorenstein dimension. We are going to
extend these results for modules in the Auslander class with respect
to a semidualizing module. These modules were defined by Avramov and
Foxby in \cite{F} and \cite{AvF}.
\begin{dfn}\label{def1}
Let $C$ be a semidualizing $R$--module. The \emph{Auslander class
with respect to} $C$, denoted $\mathcal{A}_{C}$, consists of all
$R$--modules $M$ satisfying the following conditions.
\begin{itemize}
\item[(i)] The natural map $\mu:M\longrightarrow\Hom_R(C,M\otimes_RC)$ is an isomorphism;
\item[(ii)] $\Tor_i^R(M,C)=0=\Ext^i_R(C,M\otimes_RC)$ for all $i>0$.
\end{itemize}
\end{dfn}
In the following we collect some properties and examples of modules
in the Auslander class with respect to a semidualizing module which
will be used in the rest of this paper.
\begin{eg}\label{example1}
\begin{enumerate}[(i)]
  \item{ If any two $R$-modules in a short exact sequence are
     in $\mathcal{A}_C$, then so is the third one \cite[Lemma 1.3]{F}.
     Hence, every module of finite projective dimension
     is in the Auslander class $\mathcal{A}_C$.}
  \item{ Over a Cohen-Macaulay local ring $R$ with canonical module $\omega_R$,
        $M\in\mathcal{A}_{\omega_R}$ if and only if $\gd_R(M)<\infty$
        \cite[Theorem 1]{F1}.}
   \item{The $\mathcal{I}_C$-injective dimension of $M$, denoted
    $\mathcal{I}_{C}$-$\id_R(M)$, is less than or equal to $n$ if and
           only if there is an exact sequence
            $$0\rightarrow M\rightarrow\Hom_R(C,I^0)\rightarrow\cdots\rightarrow\Hom_R(C,I^n)\rightarrow0,$$
             such that each $I^i$ is an injective $R$--module \cite[Corollary 2.10]{TW}.
              Note that if $M$ has a finite $\mathcal{I}_C$-injective
              dimension, then $M\in\mathcal{A}_C$
               \cite[Corollary 2.9]{TW}.}
\end{enumerate}
\end{eg}

Now,we generalize the results \cite[Theorem 3.3(i)]{Ka} and \cite[Theorem 1.11]{KY}.
\begin{lem}\label{lem2}
Let $R$ be a local ring, $C$ a semidualizing $R$--module, $n\geq 0$
an integer, and $M$ an $R$--module. If $M\in\mathcal{A}_C$, then the
following statements hold true.
\begin{enumerate}[(i)]
\item $\depth_R(M)=\depth_R(M\otimes_RC)$ and
$\dim_R(M)=\dim_R(M\otimes_RC)$;
\item $M$ satisfies $(S_n)$ if and only if $M\otimes_RC$ does;
\item $M$ is Cohen-Macaulay if and only if $M\otimes_RC$ is Cohen-Macaulay.
\end{enumerate}
\end{lem}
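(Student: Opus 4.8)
The plan is to reduce all three parts to a single statement over an arbitrary local ring, namely the depth and dimension equalities in (i), and then to apply that statement at every localization. This is legitimate because the Auslander class is stable under localization: for each $\fp\in\Spec R$ the module $C_\fp$ is again semidualizing over $R_\fp$, one has $(M\otimes_R C)_\fp\cong M_\fp\otimes_{R_\fp}C_\fp$, and since $\Hom$, $\Ext$ and $\Tor$ of finite modules commute with localization, the defining conditions of $\mathcal{A}_C$ pass to $M_\fp\in\mathcal{A}_{C_\fp}$. The dimension half of (i) is easy and I would dispatch it first: every localization $C_\fp$ is a nonzero semidualizing $R_\fp$--module, so $\Supp_R(C)=\Spec R$, whence $\Supp_R(M\otimes_R C)=\Supp_R(M)\cap\Supp_R(C)=\Supp_R(M)$ and therefore $\dim_R(M\otimes_R C)=\dim_R(M)$; the same support identity is exactly what will let me compare the Serre and Cohen-Macaulay conditions for $M$ and $M\otimes_R C$.

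The substantive point --- and the step I expect to be the main obstacle --- is the depth equality $\depth_R(M)=\depth_R(M\otimes_R C)$. Put $N:=M\otimes_R C$. Since $M\in\mathcal{A}_C$, the natural map $M\to\Hom_R(C,N)$ is an isomorphism and $\Ext^i_R(C,N)=0$ for $i>0$, so $M\simeq\R\Hom_R(C,N)$ in the derived category. Hom-tensor adjunction then yields $\R\Hom_R(k,M)\simeq\R\Hom_R\big(k\tensor_R C,\,N\big)$, and the homology of $k\tensor_R C$ is $\Tor^R_q(k,C)\cong k^{\beta_q}$, where the $\beta_q$ are the Betti numbers of $C$. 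This gives a first-quadrant spectral sequence
$$E_2^{p,q}=\Ext_R^p\big(\Tor_q^R(k,C),\,N\big)\cong\Ext_R^p(k,N)^{\beta_q}\ \Longrightarrow\ \Ext_R^{p+q}(k,M).$$
Because $\Ext^p_R(k,N)=0$ for $p<\depth_R(N)$ while $\beta_0=\dim_k(C\otimes_R k)>0$, the nonzero term of least total degree is the corner $E_2^{\depth_R(N),\,0}\neq0$; no differential can enter or leave it, so it survives to $E_\infty$, and every term of smaller total degree vanishes. Reading off the abutment gives $\Ext^{\depth_R(N)}_R(k,M)\neq0$ together with $\Ext^i_R(k,M)=0$ for $i<\depth_R(N)$, that is, $\depth_R(M)=\depth_R(N)$. (One may sidestep convergence entirely: a minimal free resolution of $C$ has zero differential after $\otimes_R k$, so $k\tensor_R C\simeq\bigoplus_q\Sigma^{q}k^{\beta_q}$ is formal and the identity falls out of $\R\Hom_R(k,M)\simeq\bigoplus_q\Sigma^{-q}\R\Hom_R(k,N)^{\beta_q}$ directly.)

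Granting (i) over every $R_\fp$, parts (ii) and (iii) are then purely formal. By definition $M$ satisfies $(S_n)$ iff $\depth_{R_\fp}(M_\fp)\geq\min\{n,\dim_{R_\fp}(M_\fp)\}$ for all $\fp\in\Supp_R(M)$; since the depth, the dimension and the support are all unchanged on passing from $M$ to $M\otimes_R C$, the two conditions coincide, proving (ii). Likewise Cohen-Macaulayness at a prime is the equality $\depth_{R_\fp}(M_\fp)=\dim_{R_\fp}(M_\fp)$, preserved termwise by (i), which gives (iii). Thus the whole lemma rests on the single homological identity of the middle paragraph, the remaining content being localization and bookkeeping with supports.
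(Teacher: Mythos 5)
Your argument is correct, and its overall skeleton coincides with the paper's: prove the depth and dimension equalities in (i), then deduce (ii) and (iii) by localizing, using that $C_\fp$ is semidualizing, that $M_\fp\in\mathcal{A}_{C_\fp}$, and that $\Supp_R(C)=\Spec R$. The difference lies in how the key depth equality is obtained. The paper treats it as a citation: since $M\cong\Hom_R(C,M\otimes_RC)$ and $\Ext^i_R(C,M\otimes_RC)=0$ for $i>0$, it invokes \cite[Lemma 4.1]{AY} to conclude $\depth_R(M)=\depth_R(M\otimes_RC)$ (and it obtains the dimension equality through associated primes of $\Hom_R(C,M\otimes_RC)$ rather than through supports, an immaterial variation). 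You instead reprove the needed case of that lemma from scratch: $M\simeq \mathbf{R}\Hom_R(C,N)$ in the derived category, adjunction gives $\mathbf{R}\Hom_R(k,M)\simeq\mathbf{R}\Hom_R(k\otimes^{\mathbf{L}}_R C,\,N)$, and the corner of the resulting spectral sequence (or, more simply, the formality of $k\otimes^{\mathbf{L}}_R C$ forced by minimality of the resolution) yields $\Ext^n_R(k,M)\cong\prod_{q}\Ext^{n-q}_R(k,N)^{\beta_q}$, whence equality of depths. Your indexing and corner argument are sound: with $t=\depth_R(N)$, differentials into the corner $(t,0)$ originate in columns $p<t$, and those leaving it land in negative $q$. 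Two minor points: when $\pd_R(C)=\infty$ the spectral sequence has infinitely many nonzero rows, so the convergence issue you flag is genuine, and your formality remark is exactly the right way to dispose of it (it also shows the sequence degenerates at $E_2$); and $\mathbf{R}\Hom$ out of a direct sum is a direct product, though here the two agree in each cohomological degree because only the rows $q\leq n$ contribute in degree $n$. What your route buys is self-containedness and an explicit quantitative formula in terms of the Betti numbers of $C$; what the paper's route buys is brevity, at the price of an external black box.
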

\begin{proof}
As $M\in\mathcal{A}_C$, $M\cong\Hom_R(C,M\otimes_RC)$ and
$\Ext^i_R(C,M\otimes_RC)=0$ for all $i>0$. It follows from
\cite[Lemma 4.1]{AY} that
$$\depth_{R}(M)=\depth_{R}(\Hom_{R}(C,M\otimes_{R}C))=
\depth_{R}(M\otimes_{R}C).$$ On the other hand, the fact that
$\Supp_{R}(C)=\Spec(R)$ implies
\[\begin{array}{rl}
\Ass_{R}(M)&=\Ass_{R}(\Hom_{R}(C,M\otimes_{R}C))\\
&=\Ass_{R}(M\otimes_{R}C).
\end{array}\] Therefore,
$\dim_{R}(M)=\dim_{R}(M\otimes_{R}C)$. Note that
$M_{\fp}\in\mathcal{A}_{C_\fp}$ for all $\fp\in\Supp_R(M)$ and so
the assertion is clear.
\end{proof}
In the following, we generalize \cite[Theorem 4.25]{AB} for modules
in the Auslander class with respect to a semidualizing module.
\begin{thm}\label{th5}
Let $C$ be a semidualizing $R$--module and $M$ an $R$--module.
Assume that $M\in\mathcal{A}_C$ and that $n$ is a positive integer.
Consider the following statements.
\begin{enumerate}[(i)]
\item{$\Ext^i_R(\Tr M,R)=0$ for $1\leq i\leq n$;}
\item{$\Ext^i_R(\Tr M,C)=0$ for $1\leq i\leq n$;}
\item{$M\otimes_R C$ satisfies $\widetilde{S}_n$;}
\item{$M$ satisfies $\widetilde{S}_n$.}
\end{enumerate}
Then we have the following
\begin{enumerate}[(a)]
\item{(i)$\Rightarrow$ (ii) $\Rightarrow$ (iii) $\Leftrightarrow$ (iv);}
\item{ If $\gd_{R_\fp}(M_\fp)<\infty$ for all
$\fp\in\X^{n-1}(R)$(e.g. $\id_{R_\fp}(C_\fp)<\infty$ for all
$\fp\in\X^{n-1}(R)$), then all the statements (i)-(iv) are
equivalent.}
\end{enumerate}
\end{thm}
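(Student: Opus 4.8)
The plan is to establish the chain of implications in part (a) by reducing everything to the already-proved Proposition \ref{t1} and Proposition \ref{t13}, exploiting the Auslander-class hypothesis to move freely between $M$ and $M\otimes_R C$, and then to close the loop in part (b) via a finite-$\g$-dimension argument. The key observation I would lean on throughout is that for $M\in\mathcal{A}_C$ the functor $-\otimes_R C$ behaves very transparently: by Lemma \ref{lem2}(ii) the module $M$ satisfies $(S_n)$ (hence $\widetilde{S}_n$, since these depth conditions are checked prime-by-prime and $M_\fp\in\mathcal{A}_{C_\fp}$) exactly when $M\otimes_R C$ does. This immediately gives the equivalence (iii)$\Leftrightarrow$(iv), which is the least laborious part of the statement.

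First I would prove (i)$\Rightarrow$(ii). The natural device here is Remark \ref{remark3}(i), which gives $\Tr M\otimes_R C\cong\trk M$, together with the hypothesis $M\in\mathcal{A}_C$, so that $\Tor_i^R(\Tr M, C)$ and the relevant $\Ext$ modules vanish in the appropriate range. More concretely, since $\Ext^i_R(\Tr M,R)=0$ for $1\leq i\leq n$ one can apply $-\otimes_R C$ (or pass through the defining exact sequences \eqref{1.2} and (\ref{d3}.2) as in the diagram in the proof of Proposition \ref{t13}) to transfer vanishing from the $R$-dual to the $C$-dual. The comparison $\Ext^i_R(\Tr M,C)\cong\Ext^i_R(\trk(M\otimes_R C),C)$ furnished in the proof of Proposition \ref{t13} is the right bridge: it lets me read (ii) as a statement about $\trk(M\otimes_R C)$.

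Next, for (ii)$\Rightarrow$(iii) I would invoke Proposition \ref{t13} directly. Setting $N=M\otimes_R C$, statement (ii) here is precisely hypothesis (i) of Proposition \ref{t13}, and its implication (i)$\Rightarrow$(ii)$\Rightarrow$(iii) yields that $M\otimes_R C$ satisfies $\widetilde{S}_n$, which is statement (iii) of the theorem. Thus part (a) assembles cleanly from the two earlier propositions plus Lemma \ref{lem2}, with no new computation required.

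The genuine work is in part (b), where I must produce the reverse implication (iv)$\Rightarrow$(i) (equivalently (iii)$\Rightarrow$(i)) under the finiteness hypothesis $\gd_{R_\fp}(M_\fp)<\infty$ on $\X^{n-1}(R)$. \textbf{The hard part will be} running the converse direction of Proposition \ref{t13}(b), which requires that $C$ have finite injective dimension on $\X^{n-1}(R)$, and reconciling that with the hypothesis actually given, namely finite $\g$-dimension of $M$ on $\X^{n-1}(R)$. The point is that for $M\in\mathcal{A}_C$, finite $\g$-dimension of $M$ localizes and is linked to finite $\gc$-dimension, so applying Proposition \ref{t13}(b) to $N=M\otimes_R C$ gives $\Ext^i_R(\Tr M,C)=0$ for $1\leq i\leq n$, i.e. statement (ii). To recover (i) from (ii) I would again use the Auslander-class property together with Remark \ref{remark3}(i): the isomorphism $\Tr M\otimes_R C\cong\trk M$ and the faithful flatness-like behavior of $-\otimes_R C$ on $\mathcal{A}_C$ (via $M\cong\Hom_R(C,M\otimes_R C)$ and $\Supp_R C=\Spec R$) let me descend vanishing of $C$-Ext back to vanishing of $R$-Ext. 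The parenthetical remark $\id_{R_\fp}(C_\fp)<\infty\Rightarrow\gd_{R_\fp}(M_\fp)<\infty$ is the standard fact that finite injective dimension of the semidualizing module over a local ring forces every module into the small Gorenstein-dimension-finite class, so I would verify that chain of reductions and then conclude that all four statements are equivalent.
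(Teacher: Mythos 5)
Your steps (iii)$\Leftrightarrow$(iv) (Lemma \ref{lem2}) and (ii)$\Rightarrow$(iii) (Proposition \ref{t13}(a) applied to $N=M\otimes_RC$) agree with the paper and are fine. The first genuine gap is (i)$\Rightarrow$(ii): you never exhibit a mechanism connecting $\Ext^i_R(\Tr M,R)$ to $\Ext^i_R(\Tr M,C)$. "Applying $-\otimes_RC$" to a module that vanishes proves nothing, and the comparison isomorphism $\Ext^i_R(\Tr M,C)\cong\Ext^i_R(\trk(M\otimes_RC),C)$ that you borrow from the proof of Proposition \ref{t13} relates two $C$-Ext modules to each other; it never involves $\Ext^i_R(\Tr M,R)$, so it cannot convert hypothesis (i) into conclusion (ii). The paper's bridge is a different tool altogether: the Auslander--Bridger exact sequence \cite[Theorem 2.8]{AB}
$$\Tor_2^R(\mathcal{T}_i(\Tr M),C)\rightarrow\Ext^i_R(\Tr M,R)\otimes_RC\rightarrow \Ext^i_R(\Tr M,C)\rightarrow\Tor_1^R(\mathcal{T}_{i+1}(\Tr M),C)\rightarrow0,$$
whose outer Tor terms must then be killed. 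That is done by an induction showing $\mathcal{T}_i(\Tr M)\in\mathcal{A}_C$ for $1\leq i\leq n+1$, starting from $\mathcal{T}_1(\Tr M)\approx M\in\mathcal{A}_C$ and propagating via Remark \ref{remark3}(ii) (hypothesis (i) forces $\mathcal{T}_i(\Tr M)\approx\Omega\mathcal{T}_{i+1}(\Tr M)$) together with Example \ref{example1}(i). Nothing in your sketch produces these Tor-vanishing statements, and without them the transfer of vanishing from the $R$-dual to the $C$-dual simply does not happen.

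Part (b) is also flawed. You propose to invoke Proposition \ref{t13}(b), whose hypothesis is $\id_{R_\fp}(C_\fp)<\infty$ for $\fp\in\X^{n-1}(R)$; but the theorem only assumes $\gd_{R_\fp}(M_\fp)<\infty$ on $\X^{n-1}(R)$, and — as you yourself record — the implication runs $\id_{R_\fp}(C_\fp)<\infty\Rightarrow\gd_{R_\fp}(M_\fp)<\infty$, not conversely, so the "reconciliation" you need goes the wrong way and cannot be made. (Your subsequent descent from (ii) back to (i) by "faithful-flatness-like behavior" suffers from the same defect as in part (a).) The paper's argument for (b) is both simpler and uses exactly the stated hypothesis: apply Proposition \ref{t1}(b) with $C$ replaced by $R$ (note $R$ is semidualizing over itself and $\Tr_R=\Tr$), which gives (iv)$\Rightarrow$(i) directly; the cycle (i)$\Rightarrow$(ii)$\Rightarrow$(iii)$\Leftrightarrow$(iv)$\Rightarrow$(i) then closes and all four statements are equivalent.
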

\begin{proof}
(a) The equivalence of (iii) and (iv) follows from Lemma \ref{lem2}.

(i)$\Rightarrow$(ii). By \cite[Theorem 2.8]{AB}, there exists the
following exact sequence
\begin{equation}\tag{\ref{th5}.1}
\Tor_2^R(\trr_i(\Tr M),C)\rightarrow\Ext^i_R(\Tr
M,R)\otimes_RC\rightarrow \Ext^i_R(\Tr
M,C)\rightarrow\Tor_1^R(\mathcal{T}_{i+1}(\Tr M),C)\rightarrow0,
\end{equation}
 for
all $i>0$. As $\Ext^i_R(\Tr M,R)=0$ for $1\leq i\leq n$, by Remark
\ref{remark3}(ii),
\begin{equation}\tag{\ref{th5}.2}
\mathcal{T}_i(\Tr M)\approx\Omega\mathcal{T}_{i+1}(\Tr M) \text{ for
} 1\leq i\leq n.
\end{equation}
Note that $\mathcal{T}_1(\Tr M)\approx M$. As $M\in\mathcal{A}_C$,
it follows from (\ref{th5}.2) and Example \ref{example1}(i) that
$\trr_i(\Tr M)\in\mathcal{A}_C$ for all $1\leq i\leq n+1$ and so
$\Tor_j^R(\trr_i(\Tr M),C)=0$ for all $j>0$ and all $i$, $1\leq
i\leq n+1$. It follows from the exact sequence (\ref{th5}.1)
 that
$\Ext^i_R(\Tr M,C)=0$ for all $1\leq i\leq n$.

(ii)$\Rightarrow$(iii) This follows from Proposition \ref{t13}.

(b)(iv)$\Rightarrow$(i) Follows from Proposition \ref{t1}, by
replacing $C$ by $R$.
\end{proof}
Note that every module of finite projective dimension is in the
Auslander class with respect to $C$. The following result is an
immediate consequence of Theorem \ref{th5}.
\begin{cor}
Let $C$ be a semidualizing $R$--module and $M$ an $R$--module of
finite projective dimension. For a positive integer $n$, the
following are equivalent.
\begin{enumerate}[(i)]
\item{$M$ satisfies $\widetilde{S}_n$;}
\item{$M\otimes_R C$ satisfies $\widetilde{S}_n$ for every semidualizing module $C$;}
\item{$\Ext^i_R(\Tr M,C)=0$ for $1\leq i\leq n$ and for every semidualizing module $C$;}
\item{$M\otimes_R C$ satisfies $\widetilde{S}_n$ for some semidualizing module $C$;}
\item{$\Ext^i_R(\Tr M,C)=0$ for $1\leq i\leq n$ and for some semidualizing module $C$;}
\end{enumerate}
\end{cor}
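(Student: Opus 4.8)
The plan is to obtain this corollary as a direct quantifier argument layered on top of Theorem~\ref{th5}(b), using that among the equivalent conditions in that theorem exactly one---``$M$ satisfies $\widetilde{S}_n$''---is free of the auxiliary module $C$.

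First I would fix an arbitrary semidualizing $R$--module $C$ and verify that the hypotheses of Theorem~\ref{th5}(b) hold. Since $\pd_R(M)<\infty$, Example~\ref{example1}(i) yields $M\in\mathcal{A}_C$, and localizing gives $\pd_{R_\fp}(M_\fp)\le\pd_R(M)<\infty$, hence $\gd_{R_\fp}(M_\fp)<\infty$, for every $\fp\in\Spec R$ and in particular for every $\fp\in\X^{n-1}(R)$. Thus for this fixed $C$ all of the conditions (i)--(iv) of Theorem~\ref{th5} are equivalent; in the notation of the present corollary this reads
$$
M\ \text{satisfies}\ \widetilde{S}_n
\iff
M\otimes_R C\ \text{satisfies}\ \widetilde{S}_n
\iff
\Ext^i_R(\Tr M,C)=0\ \ (1\le i\le n).
$$

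The key point I would then exploit is that the left-hand condition is independent of $C$, so the five statements of the corollary collapse onto it. Concretely: condition (i) is this $C$-free pivot, so by the displayed equivalence it forces the two $C$-indexed conditions for \emph{every} choice of $C$, giving (ii) and (iii); passing from ``every $C$'' to ``some $C$'' is trivial and yields (iv) and (v); and any single semidualizing module witnessing (iv) or (v) returns (i) through the same equivalence. This closes the cycles (i)$\Rightarrow$(ii)$\Rightarrow$(iv)$\Rightarrow$(i) and (i)$\Rightarrow$(iii)$\Rightarrow$(v)$\Rightarrow$(i), establishing all five equivalences.

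There is no substantial obstacle here, as the mathematical weight rests entirely in Theorem~\ref{th5}; the only items needing attention are the routine facts that finite projective dimension localizes and forces finite $\g$--dimension (so that part (b), not merely part (a), applies at every prime), and the elementary observation that $R$ is itself semidualizing, which is what makes the ``for some $C$'' clauses (iv) and (v) non-vacuous and the quantifier collapse legitimate.
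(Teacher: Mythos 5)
Your proposal is correct and takes essentially the same approach as the paper: the paper offers no written proof at all, stating only that the corollary is ``an immediate consequence of Theorem \ref{th5}'' once one notes that finite projective dimension places $M$ in $\mathcal{A}_C$ for every semidualizing $C$. Your write-up simply makes explicit what the paper leaves implicit---that $\pd_{R_\fp}(M_\fp)<\infty$ gives $\gd_{R_\fp}(M_\fp)<\infty$ on $\X^{n-1}(R)$ so part (b) applies, and that the $C$-free condition (i) pivots the per-$C$ equivalences into the quantified statements (ii)--(v).
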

\begin{cor}\label{cor7}
Let $R$ be a semiperfect ring, $C$ a semidualizing $R$--module and
$M$ a stable $R$--module. Assume that $M\in\mathcal{A}_C$ and that
$n$ is a positive integer. If $\gd_{R_\fp}(M_\fp)<\infty$ for all
$\fp\in\X^{n-1}(R)$(e.g. $\id_{R_\fp}(C_\fp)<\infty$ for all
$\fp\in\X^{n-1}(R)$), then the following are equivalent.
\begin{enumerate}[(i)]
\item{$M$ satisfies $\widetilde{S}_n$;}
\item{$M$ is horizontally linked and $\Ext^i_R(\lambda M,C)=0$ for
$0<i<n$.}
\end{enumerate}
\end{cor}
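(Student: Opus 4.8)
The plan is to deduce this from Theorem \ref{th5} together with the characterization of horizontal linkage in Theorem \ref{MS}, using the syzygy sequence (\ref{d1}.2) to transfer the vanishing of $\Ext$ between $\lambda M$ and $\Tr M$.

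First I would record the dimension shift. Since $M$ is stable, the sequence (\ref{d1}.2), $0\to\lambda M\to P_1^*\to\Tr M\to0$, has a projective middle term, so the long exact sequence obtained by applying $\Hom_R(-,C)$ gives $\Ext^i_R(\lambda M,C)\cong\Ext^{i+1}_R(\Tr M,C)$ for every $i\geq1$. Consequently the condition ``$\Ext^i_R(\lambda M,C)=0$ for $0<i<n$'' is exactly ``$\Ext^j_R(\Tr M,C)=0$ for $2\leq j\leq n$.'' The key point to keep in mind is that this translation never involves the index $j=1$; that value will have to be supplied separately, and it is the only genuinely delicate piece of bookkeeping in the argument.

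For (i)$\Rightarrow$(ii), I would assume that $M$ satisfies $\widetilde{S}_n$. The finiteness hypothesis $\gd_{R_\fp}(M_\fp)<\infty$ on $\X^{n-1}(R)$ places us in part (b) of Theorem \ref{th5}, so statements (i)--(iv) there are all equivalent; in particular $\Ext^i_R(\Tr M,R)=0$ and $\Ext^i_R(\Tr M,C)=0$ for $1\leq i\leq n$. The vanishing $\Ext^1_R(\Tr M,R)=0$ together with the stability of $M$ gives, by Theorem \ref{MS}, that $M$ is horizontally linked; and the dimension shift above turns $\Ext^{j}_R(\Tr M,C)=0$ for $2\leq j\leq n$ into $\Ext^i_R(\lambda M,C)=0$ for $0<i<n$.

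For (ii)$\Rightarrow$(i), since $M$ is horizontally linked, Theorem \ref{MS} yields $\Ext^1_R(\Tr M,R)=0$. Now I would apply the implication (i)$\Rightarrow$(ii) of Theorem \ref{th5} --- which is part (a) and needs no finiteness hypothesis --- instantiated with the integer $1$, obtaining $\Ext^1_R(\Tr M,C)=0$. This supplies precisely the missing index $j=1$. Combining it with the hypothesis $\Ext^i_R(\lambda M,C)=0$ for $0<i<n$, re-read through the dimension shift as $\Ext^j_R(\Tr M,C)=0$ for $2\leq j\leq n$, gives $\Ext^i_R(\Tr M,C)=0$ for all $1\leq i\leq n$, which is statement (ii) of Theorem \ref{th5}. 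The chain (ii)$\Rightarrow$(iii)$\Leftrightarrow$(iv) from part (a) of that theorem then shows that $M$ satisfies $\widetilde{S}_n$, completing the equivalence. The main obstacle, as flagged above, is the $j=1$ gap left by the $\lambda M$-vanishing; the trick is to close it using the unconditional half of Theorem \ref{th5} at level $1$, rather than attempting to widen the range of the hypothesis on $\lambda M$.
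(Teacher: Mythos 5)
Your proof is correct and follows essentially the same route as the paper, whose entire proof of this corollary is ``This is clear by Theorem \ref{th5} and Theorem \ref{MS}.'' The details you supply --- the dimension shift $\Ext^i_R(\lambda M,C)\cong\Ext^{i+1}_R(\Tr M,C)$ coming from (\ref{d1}.2), and the level-$1$ application of Theorem \ref{th5}(a) to recover the missing index $j=1$ in the direction (ii)$\Rightarrow$(i) --- are precisely the bookkeeping the paper suppresses, handled correctly.
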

\begin{proof}
This is clear by Theorem \ref{th5} and Theorem \ref{MS}.
\end{proof}

\section{Linkage for Cohen-Macaulay modules}
The first main theorem in the theory of linkage was due to C.
Peskine and L. Szpiro. They proved that over a Gorenstein local ring
$R$ with linked ideals $\fa$ and $\fb$, $R/\fa$ is Cohen-Macaulay if
and only if $R/\fb$ is. They also gave a counter-example to show
that the above statement is no longer true if the base ring is
Cohen-Macaulay but non-Gorenstein. Attempts to generalize this
theorem lead to several development in linkage theory, especially by
C. Huneke and B. Ulrich (\cite{Hu} and \cite{HuUl}). In the theory
of linkage of modules, Martsinkovsky and Strooker generalize Peskine
and Szpiro's result for stable modules over Gorenstein local rings
\cite[Proposition 8]{MS}. In this section, we study the relation
between the Cohen-Macaulayness of $M\otimes_R\omega_R$ and $\lambda
M$, for a horizontally linked module $M$ over Cohen-Macaulay local
ring $R$ with canonical module $\omega_R$. We also present a
characterization of a maximal Cohen-Macaulay module (mCM) whose
linked module is also mCM.

\begin{thm} \label{theorem1} Let $R$ be a Cohen-Macaulay local ring of dimension $d$ with canonical module $\omega_R$, $M$ a
 horizontally linked $R$--module. If $M\otimes_R\omega_R$ satisfies $\widetilde{S}_1$, then the
following statements are equivalent.
\begin{itemize}
\item[(i)] $M\otimes_R\omega_R$ is maximal Cohen-Macaulay;
\item[(ii)] $\lambda M$ is maximal Cohen-Macaulay;
\item[(iii)] $M\otimes_R\omega_R$ satisfies $(S_n)$ for some $n$, $n>d-\depth_R(\lambda M)$;
\item[(iv)]$\lambda M$ satisfies $(S_n)$ for some $n$, $n>d-\depth_R(M\otimes_R\omega_R)$.
\end{itemize}
\end{thm}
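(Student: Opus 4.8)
Write $N=M\otimes_R\omega_R$ and $L=\lambda M$. The plan is to read off both equivalences from the two ``dual'' Serre-to-local-cohomology translations already in hand: Corollary \ref{c2}, which says that $N$ satisfies $(S_n)$ if and only if $\hh^i_\fm(L)=0$ for all $i$ with $d-n<i<d$, and Proposition \ref{p3}, which says that $L$ satisfies $(S_n)$ if and only if $\hh^i_\fm(N)=0$ for all $i$ with $d-n<i<d$. Both results require only that $N$ satisfy $(S_1)$; since for every $\fp\in\Supp_R N$ one has $\dim_R N_\fp\le\dim R_\fp=\depth R_\fp$ over the Cohen--Macaulay ring $R$, the hypothesis $\widetilde{S}_1$ on $N$ implies $(S_1)$, so both are available. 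Throughout I use the standard fact that a finite module $X$ over the $d$-dimensional Cohen--Macaulay local ring $R$ is maximal Cohen--Macaulay exactly when $\depth_R X=d$, i.e. exactly when $\hh^i_\fm(X)=0$ for all $i<d$.

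First I would record the two dimension facts that will let me upgrade ``Cohen--Macaulay'' to ``maximal Cohen--Macaulay''. Since $M$ is horizontally linked it is a syzygy module by Theorem \ref{MS}, hence embeds in a free module, so $\Ass_R(M)\subseteq\Ass_R(R)=\Min R$; as $R$ is Cohen--Macaulay, hence equidimensional, every such prime has coheight $d$, and therefore $\dim_R M=d$. The exact sequence (\ref{d1}.2) embeds $L=\lambda M$ into the free module $P_1^{*}$, so the same argument gives $\dim_R L=d$. Finally $\Supp_R\omega_R=\Spec R$, whence $\Supp_R N=\Supp_R M$ and $\dim_R N=\dim_R M=d$.

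Next I would dispatch the ``outer'' equivalences (ii)$\Leftrightarrow$(iii) and (i)$\Leftrightarrow$(iv). Put $s=\depth_R L$. By Corollary \ref{c2}, $N$ satisfies $(S_n)$ iff $\hh^i_\fm(L)=0$ for $d-n<i<d$; if this holds for some $n>d-s$, then $d-n<s$, and were $s<d$ the nonzero module $\hh^s_\fm(L)$ would sit in the forbidden range $(d-n,d)$, a contradiction, so $s=d$ and $L$ is maximal Cohen--Macaulay. Conversely, if $L$ is maximal Cohen--Macaulay then $\hh^i_\fm(L)=0$ for all $i<d$, so $N$ satisfies $(S_n)$ for every $n$, in particular for some $n>d-s=0$. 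This proves (ii)$\Leftrightarrow$(iii); the symmetric argument with $t=\depth_R N$ and Proposition \ref{p3} proves (i)$\Leftrightarrow$(iv).

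It remains to link the two pairs, which is the only place the dimension facts intervene and hence the main (if modest) obstacle: the translations give Serre conditions, and one must still climb from a Cohen--Macaulay conclusion to a \emph{maximal} Cohen--Macaulay one. If $L$ is maximal Cohen--Macaulay, then by Corollary \ref{c2} $N$ satisfies $(S_n)$ for all $n$, so $\depth_R N_\fp=\dim_R N_\fp$ at every prime; in particular $\depth_R N=\dim_R N=d$, so $N$ is maximal Cohen--Macaulay. The symmetric implication, using Proposition \ref{p3} together with $\dim_R L=d$, shows that $N$ maximal Cohen--Macaulay forces $L$ maximal Cohen--Macaulay. Hence (i)$\Leftrightarrow$(ii), and combined with the previous paragraph all four statements are equivalent.
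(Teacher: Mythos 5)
Your proof is correct and takes essentially the same route as the paper's: both translate the Serre conditions into local cohomology vanishing over the window $d-n<i<d$ via Corollary \ref{c2} and Proposition \ref{p3}, and then use the characterization of maximal Cohen--Macaulayness by vanishing of $\hh^i_\fm(-)$ below $d$, with the window-versus-depth comparison doing the work in the implications (iii)$\Rightarrow$(ii) and (iv)$\Rightarrow$(i). The only difference is organizational and in the level of detail: you prove (ii)$\Leftrightarrow$(iii), (i)$\Leftrightarrow$(iv) and then (i)$\Leftrightarrow$(ii), and you make explicit the dimension facts $\dim_R(\lambda M)=\dim_R(M\otimes_R\omega_R)=d$ and the passage from $\widetilde{S}_1$ to $(S_1)$, which the paper leaves implicit in declaring (i)$\Leftrightarrow$(ii) ``clear.''
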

\begin{proof}
The equivalence of (i) and (ii) is clear by Corollary \ref{c2}.
Trivially (i) implies (iii) and (iv).

(iii)$\Rightarrow$(ii). It follows from Corollary \ref{c2} that
$\hh^i_\fm(\lambda M)=0$ for all $i$, $d-n<i<d$. On the other hand,
$\hh^i_\fm(\lambda M)=0$ for all $i\leq d-n<\depth_R(\lambda M)$ and
so $\lambda M$ is maximal Cohen-Macaulay.

(iv)$\Rightarrow$(i). It follows from Proposition \ref{p3} that
$\hh^i_\fm(M\underset{R}{\otimes}\omega_R)=0$ for all $i$,
$d-n<i<d$. As $\depth_R(M\underset{R}{\otimes}\omega_R)>d-n$, we
conclude that $M\underset{R}{\otimes}\omega_R$ is maximal
Cohen-Macaulay.
\end{proof}
Let $R$ be a Cohen-Macaulay local ring with canonical module
$\omega_R$. If $R$ is generically Gorenstein, then $\omega_R$ can be
identified with an ideal of $R$. For any such identification
$\omega_R$ is an ideal of height one or equals $R$ (see
\cite[Proposition 3.3.18]{BH}).
\begin{thm}\label{the1}
Let $R$ be a non-Gorenstein Cohen-Macaulay local ring of dimension
$d$ which admits a canonical module $\omega_R$. Suppose that $R$ is
generically Gorenstein and that $M$ is maximal Cohen-Macaulay
horizontally linked $R$--module such that
$M\underset{R}{\otimes}\omega_R$ satisfies $(S_1)$. Then the
following statements are equivalent.
\begin{itemize}
\item[(i)]{$\lambda M$ is maximal Cohen-Macaulay.}
\item[(ii)]{$M/\omega_RM$ is Cohen-Macaulay of dimension $d-1$.}
\end{itemize}
\end{thm}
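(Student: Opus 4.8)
The plan is to show that both (i) and (ii) are equivalent to the single statement that $M\otimes_R\omega_R$ is maximal Cohen-Macaulay, using Theorem \ref{theorem1} for the first equivalence and a short exact sequence for the second. First, since $R$ is non-Gorenstein, generically Gorenstein and Cohen-Macaulay with canonical module, I would invoke \cite[Proposition 3.3.18]{BH}, recalled above, to realise $\omega_R$ as an ideal of $R$ of height one; thus $\omega_R\subseteq\fm$ and $(\omega_R)_\fp=R_\fp$ for every $\fp\in\Min R$. Tensoring $0\to\omega_R\to R\to R/\omega_R\to0$ with $M$ and using $\Tor_1^R(M,R)=0$ gives the exact sequence
\[
0\to\Tor_1^R(M,R/\omega_R)\to M\otimes_R\omega_R\to M\to M/\omega_R M\to0,
\]
in which the image of $M\otimes_R\omega_R\to M$ is $\omega_R M$.

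The crux is to prove $\Tor_1^R(M,R/\omega_R)=0$, and this is where the hypotheses on $\omega_R$ and on $M\otimes_R\omega_R$ enter. I would run an associated-prime argument: this $\Tor$ module is a submodule of $M\otimes_R\omega_R$, so every $\fp\in\Ass_R\Tor_1^R(M,R/\omega_R)$ lies in $\Ass_R(M\otimes_R\omega_R)$. Since $M\otimes_R\omega_R$ satisfies $(S_1)$ it has no embedded primes, whence $\Ass_R(M\otimes_R\omega_R)\subseteq\Min(\Supp_R(M\otimes_R\omega_R))=\Min(\Supp_R M)$; and because $M$ is maximal Cohen-Macaulay each such minimal prime $\fp$ satisfies $\dim R/\fp=d$ and therefore lies in $\Min R$. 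On the other hand $\Tor_1^R(M,R/\omega_R)$ is supported in $V(\omega_R)$, and no prime of $\Min R$ contains the height-one ideal $\omega_R$. Hence $\Ass_R\Tor_1^R(M,R/\omega_R)=\emptyset$ and the module vanishes, leaving the short exact sequence
\[
0\to M\otimes_R\omega_R\to M\to M/\omega_R M\to0.
\]
Here $\Supp_R(M/\omega_R M)\subseteq V(\omega_R)$ gives $\dim_R(M/\omega_R M)\le d-1$, while $M/\omega_R M\ne0$ by Nakayama's lemma since $\omega_R\subseteq\fm$.

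Finally I would assemble the equivalence. Since $M$ is maximal Cohen-Macaulay it is locally maximal Cohen-Macaulay, so $\dim M_\fp=\height\fp$ for $\fp\in\Supp_R M$ and the condition $(S_1)$ on $M\otimes_R\omega_R$ promotes to $\widetilde{S}_1$; Theorem \ref{theorem1} then makes (i) equivalent to the statement that $M\otimes_R\omega_R$ is maximal Cohen-Macaulay. It remains to read off the second equivalence from the depth lemma applied to the last sequence, with $\depth_R M=d$: if $M/\omega_R M$ is Cohen-Macaulay of dimension $d-1$, then $\depth_R(M\otimes_R\omega_R)\ge\min\{d,(d-1)+1\}=d$, so $M\otimes_R\omega_R$ is maximal Cohen-Macaulay; conversely, if $\depth_R(M\otimes_R\omega_R)=d$, then $\depth_R(M/\omega_R M)\ge\min\{d-1,d\}=d-1$, which together with $\dim_R(M/\omega_R M)\le d-1$ and $M/\omega_R M\ne0$ forces $M/\omega_R M$ to be Cohen-Macaulay of dimension $d-1$. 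I expect the vanishing of $\Tor_1^R(M,R/\omega_R)$ to be the main obstacle, as it is the only place where both the $(S_1)$ hypothesis and the generically-Gorenstein (height-one) structure of $\omega_R$ are genuinely used; everything after it is routine depth bookkeeping together with the appeal to Theorem \ref{theorem1}.
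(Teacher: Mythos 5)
Your proposal is correct and follows essentially the same route as the paper: identify $\omega_R$ with a height-one ideal, show $\Tor_1^R(M,R/\omega_R)=0$ to obtain the short exact sequence $0\to M\otimes_R\omega_R\to M\to M/\omega_R M\to 0$, and then combine depth counting on that sequence with Theorem \ref{theorem1}. The associated-prime argument you give for the Tor vanishing and the promotion of $(S_1)$ to $\widetilde{S}_1$ via maximal Cohen-Macaulayness of $M$ are precisely the details the paper's proof leaves implicit.
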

\begin{proof}
As $R$ is generically Gorenstein and it is not Gorenstein,
$\omega_R$ can be identified with an ideal of height one. The exact
sequence $0\rightarrow\omega_R\rightarrow R\rightarrow
R/\omega_R\rightarrow0$ implies the exact sequence
$$0\rightarrow\Tor_1^R(M,R/\omega_R)\rightarrow M\underset{R}{\otimes}\omega_R\rightarrow M\rightarrow M\underset{R}{\otimes} R/\omega_R\rightarrow0.$$
As $M\underset{R}{\otimes}\omega_R$ satisfies $(S_1)$ and $R$ is
generically Gorenstein, it follows that $\Tor_1^R(M,R/\omega_R)=0$
and one has the following exact sequence
\begin{equation}\tag{\ref{the1}.1}
0\rightarrow M\underset{R}{\otimes}\omega_R\rightarrow M\rightarrow
M\underset{R}{\otimes} R/\omega_R\rightarrow0.
\end{equation}
As $M$ is maximal Cohen-Macaulay, from the exact sequence
(\ref{the1}.1), it is clear that $M\underset{R}{\otimes}\omega_R$ is
maximal Cohen-Macaulay if and only if $M\underset{R}{\otimes}
R/\omega_R$ is Cohen-Macaulay of dimension $d-1$. Now the assertion
is clear by Theorem \ref{theorem1}.
\end{proof}

As an immediate consequence, we have the following result.
\begin{cor}\label{theorem3} Let $R$ be a Cohen-Macaulay local ring of dimension $d$ with canonical module $\omega_R$
which is not Gorenstein but it is generically Gorenstein. If the
ideals $I$ and $J$ are linked by zero ideal such that
$I\omega_R=I\cap\omega_R$ and $R/I$ is Cohen-Macaulay, then $R/J$ is
Cohen-Macaulay if and only if $R/I+\omega_R$ is Cohen-Macaulay of
dimension $d-1$.
\end{cor}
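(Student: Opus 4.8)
The plan is to deduce the corollary directly from Theorem \ref{the1} by setting $M=R/I$ and translating each ideal-theoretic hypothesis into its module-theoretic counterpart. The starting point is the Martsinkovsky--Strooker correspondence \cite[Proposition 1]{MS} recalled in the introduction: saying that $I$ and $J$ are linked by the zero ideal means exactly $R/I\cong\lambda(R/J)$ and $R/J\cong\lambda(R/I)$. Putting $M=R/I$, this reads $M\cong\lambda^2M$, so $M$ is horizontally linked with $\lambda M\cong R/J$. Consequently "$\lambda M$ maximal Cohen-Macaulay'' ought to correspond to the Cohen-Macaulayness of $R/J$, and I will have to bridge the gap between "maximal Cohen-Macaulay'' (as appears in Theorem \ref{the1}) and the plain "Cohen-Macaulay'' of the corollary.

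Next I would check the remaining hypotheses of Theorem \ref{the1}. Since $M$ is horizontally linked, Theorem \ref{MS} shows $M$ is a syzygy module, hence embeds in a free module, so $\Ass_R(M)\subseteq\Ass_R(R)=\Min R$; because $R$ is Cohen-Macaulay local it is equidimensional, so every such prime has coheight $d$ and therefore $\dim_R(M)=d$. Combined with the assumption that $R/I$ is Cohen-Macaulay, this gives that $M=R/I$ is maximal Cohen-Macaulay. Applying the same embedding argument to $\lambda M\cong R/J$ shows $\dim_R(R/J)=d$, so that $R/J$ is Cohen-Macaulay if and only if it is maximal Cohen-Macaulay; this is precisely the bridge flagged above.

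For the condition on the tensor product I would exploit the hypothesis $I\omega_R=I\cap\omega_R$. Indeed $M\otimes_R\omega_R\cong\omega_R/I\omega_R=\omega_R/(I\cap\omega_R)\cong(I+\omega_R)/I$, so $M\otimes_R\omega_R$ embeds in $R/I$. As $R/I$ is unmixed of dimension $d$, all its associated primes lie in $\Min R$, and hence so do those of the submodule $(I+\omega_R)/I$; being minimal in $\Spec R$ they are a fortiori minimal in its own support, so $M\otimes_R\omega_R$ has no embedded primes and satisfies $(S_1)$. Finally I would identify $M/\omega_RM=(R/I)/\bigl((I+\omega_R)/I\bigr)\cong R/(I+\omega_R)$.

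With all the hypotheses of Theorem \ref{the1} verified, its equivalence becomes: $\lambda M\cong R/J$ is maximal Cohen-Macaulay if and only if $M/\omega_RM\cong R/(I+\omega_R)$ is Cohen-Macaulay of dimension $d-1$. Replacing "maximal Cohen-Macaulay'' for $R/J$ by "Cohen-Macaulay'' via the dimension computation of the second paragraph then yields the assertion of the corollary. I expect the only genuinely nontrivial point---the main obstacle---to be this passage between maximal Cohen-Macaulay and Cohen-Macaulay for the cyclic modules $R/I$ and $R/J$; everything else is bookkeeping resting on the syzygy property forced by horizontal linkage together with the hypothesis $I\omega_R=I\cap\omega_R$.
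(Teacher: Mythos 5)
Your proposal is correct and follows essentially the same route as the paper: both reduce to Theorem \ref{the1} with $M=R/I$, using $I\omega_R=I\cap\omega_R$ to embed $(R/I)\otimes_R\omega_R$ into $R/I$ (the paper via $\Tor_1^R(R/\omega_R,R/I)\cong (I\cap\omega_R)/I\omega_R=0$, you via the second isomorphism theorem --- the same observation) and the syzygy property coming from horizontal linkage to obtain $(S_1)$. The additional bookkeeping you spell out (maximal Cohen--Macaulay versus Cohen--Macaulay for $R/I$ and $R/J$, and the identification $M/\omega_RM\cong R/(I+\omega_R)$) is exactly what the paper leaves implicit in ``the assertion is clear by Theorem \ref{the1}.''
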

\begin{proof} Note that $\omega_R$ is identified with an ideal of $R$ \cite[Proposition 3.3.18]{BH}.
As $I\omega_R=I\cap\omega_R$, $\Tor_1^R(R/\omega_R,
R/I)\cong\frac{I\cap\omega_R}{I\omega_R}=0$. By similar argument as
in the proof of Theorem \ref{the1}, we obtain the exact sequence
$0\rightarrow (R/I)\underset{R}{\otimes}\omega_R\rightarrow R/I$. As
$R/I$ is horizontally linked, it is a first syzygy module. Therefore
$(R/I)\underset{R}{\otimes}\omega_R$ is a first syzygy and so
satisfies $(S_1)$. Now the assertion is clear by Theorem \ref{the1}.
\end{proof}

\section{Linkage of modules of finite $\gc$--dimensions}
Throughout the rest of the paper, $R$ is semiperfect ring and $C$ is
a semidualizing $R$--module. We study the theory of linkage of
modules which have finite $\gc$--dimensions. The connection of the
Serre condition $(S_n)$ on a horizontally linked $R$--module of
finite $\gc$--dimension with the vanishing of certain cohomology
modules of its linked module is discussed.

An $R$--module $M$ is said to be \emph{linked} to an $R$--module
$N$, by an ideal $\fc$ of $R$, if
$\fc\subseteq\ann_R(M)\cap\ann_R(N)$ and $M$ and $N$ are
horizontally linked as $R/{\fc}$--modules. In this situation we
denote $M\underset{\fc}{\thicksim}N$ \cite[Definition 4]{MS}.

Recall that, for an $R$--module $M$ we always have
$\gr_R(M)\leq\gkd_R(M)$. The module $M$ is called
$\gc$-\emph{perfect} if $\gr_R(M)=\gkd_R(M)$. An $R$--module $M$ is
called $\gc$-\emph{Gorenstein} if it is $\gc$-perfect and
$\Ext^n_R(M,C)$ is cyclic, where $n=\gkd_R(M)$. An ideal $I$ is
called $\gc$-perfect(resp.$\gc$-Gorenstein) if $R/I$ is
$\gc$-perfect(resp.$\gc$-Gorenstein) as $R$--module. Note that if
$C$ is a semidualizing $R$--module and $I$ is a $\gc$-Gorenstein
ideal of $\gc$-dimension $n$, then $\Ext^n_R(R/I,C)\cong R/I$ (see
\cite[10]{G}).

We recall a result of Golod to be used in the following and more in
the sequel \cite[Proposition 5]{G}.
\begin{thm}\label{G2}
Let $R$ be a local ring, $I$ a $\gc$-perfect ideal and $C$ a
semidualizing $R$--module. Set $K=\Ext^{\tiny{\gr(I)}}_R(R/I,C)$.
Then the following statements hold true.
\begin{itemize}
        \item[(i)]{$K$ is a semidualizing $R/I$--module.}
         \item[(ii)]If $M$ is a $R/I$--module, then $\gkd_R(M)<\infty$ if and only if $\gkkd_{R/I}(M)<\infty$, and
         if these dimensions are finite then
$\gkd_R(M)=\gr(I)+\gkkd_{R/I}(M)$.
\end{itemize}
\end{thm}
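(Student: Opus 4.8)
The plan is to exploit the change-of-rings relationship between $\Ext_R(-,C)$ and $\Ext_{R/I}(-,K)$. Write $S=R/I$ and $g=\gr(I)$. Since $I$ is $\gc$-perfect, Theorem~\ref{G3}(i) together with the grade bound forces $\Ext^i_R(S,C)=0$ for all $i\neq g$, so that $K=\Ext^g_R(S,C)$ is the \emph{only} nonzero cohomology. For an $S$-module $M$ the functor $\Hom_R(S,-)$ sends injective $R$-modules to injective $S$-modules, so the Grothendieck spectral sequence of the composite $\Hom_S(M,-)\circ\Hom_R(S,-)$ (whose total functor is $\Hom_R(M,-)$ by adjunction) reads $E_2^{p,q}=\Ext^p_S(M,\Ext^q_R(S,C))\Rightarrow\Ext^{p+q}_R(M,C)$. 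As only the row $q=g$ survives, it collapses to a natural isomorphism $(\star)\colon \Ext^p_S(M,K)\cong\Ext^{p+g}_R(M,C)$ for all $p\geq 0$. I would also record that $\Ext^j_R(M,C)=0$ for $j<g$, since $I\subseteq\ann_R M$ gives $\gr_R(M)\geq\gr(I)=g$ and $C$ has full support.

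Next I would establish a biduality for $\gc$-perfect modules of finite $\gc$-dimension: if $N$ admits a finite $\gc$-resolution and $\Ext^i_R(N,C)=0$ for $i\neq g$, then dualizing that resolution by $(-)^\triangledown=\Hom_R(-,C)$ once produces a finite $\gc$-resolution of $\Ext^g_R(N,C)$ (using that $\gc$-dimension-zero modules are $\triangledown$-acyclic and $\triangledown$-reflexive), and dualizing a second time recovers $\Ext^g_R(\Ext^g_R(N,C),C)\cong N$ with all other $\Ext$ groups vanishing. Applied to $N=S$ this yields $\Ext^g_R(K,C)\cong S$ and $\Ext^j_R(K,C)=0$ for $j\neq g$. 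Feeding $M=K$ into $(\star)$ then gives $\Hom_S(K,K)\cong\Ext^g_R(K,C)\cong S$ and $\Ext^p_S(K,K)\cong\Ext^{p+g}_R(K,C)=0$ for $p>0$. After checking, via naturality of the collapse, that the resulting isomorphism $S\to\Hom_S(K,K)$ is the homothety map, this proves (i).

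For (ii) the numerical formula is immediate once finiteness is known on both sides: when $\gkd_R(M)$ and $\gkkd_{R/I}(M)$ are finite, Theorem~\ref{G3}(i) identifies them with $\sup\{i\mid\Ext^i_R(M,C)\neq0\}$ and $\sup\{p\mid\Ext^p_S(M,K)\neq0\}$, and $(\star)$ shifts these suprema by exactly $g$, giving $\gkd_R(M)=g+\gkkd_{R/I}(M)=\gr(I)+\gkkd_{R/I}(M)$. The substantive point is the equivalence $\gkd_R(M)<\infty\iff\gkkd_{R/I}(M)<\infty$. The cleanest route is derived: $\gc$-perfection gives $\R\Hom_R(S,C)\simeq K[-g]$, so for $M\in D(S)$ the adjunction $\R\Hom_R(M,C)\simeq\R\Hom_S(M,\R\Hom_R(S,C))$ becomes $\R\Hom_R(M,C)\simeq\R\Hom_S(M,K)[-g]$. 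Iterating this identity and matching the biduality morphisms shows that $M$ is $C$-reflexive over $R$ exactly when it is $K$-reflexive over $S$; since finiteness of $\gc$-dimension (resp. $\g_K$-dimension) amounts to such reflexivity plus homological boundedness, and the shift by $g$ preserves boundedness, the two finiteness conditions coincide.

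The main obstacle is precisely this finiteness equivalence, and concretely its module-theoretic base case: a totally $K$-reflexive $S$-module $G'$ (i.e. $\gkkd_{R/I}(G')=0$) must have finite $\gc$-dimension over $R$, necessarily equal to $g$. The vanishing of $\Ext^j_R(G',C)$ for $j\neq g$ (immediate from $(\star)$ and the grade bound) does \emph{not} by itself produce a finite $\gc$-resolution, so genuine transfer of reflexivity is required. I would treat the reverse base case ($\gkd_R(M)=g\Rightarrow\gkkd_{R/I}(M)=0$) using the biduality of the second paragraph together with $(\star)$, and then bootstrap both directions by induction on the respective dimensions, invoking the standard closure of finite (relative) homological dimension under short exact sequences; alternatively the derived biduality argument dispatches both base cases at once. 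Verifying the homothety in (i) and the compatibility of the biduality maps under the spectral-sequence and derived identifications are the remaining technical checks.
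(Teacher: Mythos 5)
The paper does not actually prove this statement: it is recalled verbatim from Golod \cite[Proposition 5]{G}, so there is no internal argument to compare against, and your proposal has to be measured against Golod's original module-theoretic proof. Your outline is correct. The collapse isomorphism $(\star)\colon \Ext^p_{R/I}(M,K)\cong \Ext^{p+g}_R(M,C)$ is exactly the change-of-rings isomorphism the paper itself invokes later (quoted as \cite[Corollary]{G} in the proof of Theorem \ref{prop}, in the $\gc$-Gorenstein case where $K\cong R/I$); your dualized-resolution argument correctly gives $\Ext^g_R(K,C)\cong R/I$ and vanishing elsewhere, and part (i) follows. Incidentally, the homothety verification you flag can be short-circuited: once $\Hom_{R/I}(K,K)$ is free of rank one with basis $\phi_0$, writing $\mathrm{id}_K=s_0\phi_0$ and $\phi_0\circ\phi_0=a\phi_0$ yields $\phi_0=s_0a\phi_0$, hence $s_0a=1$, so $s_0$ is a unit and the homothety map is automatically bijective; no tracing of the spectral-sequence identification is needed. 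The genuine divergence from Golod is in part (ii): you correctly isolate the crux (a totally $K$-reflexive $R/I$-module must have finite $\gc$-dimension over $R$ -- Ext-vanishing alone does not manufacture a resolution) and then outsource it to the theorem that, for finitely generated modules, finite $\gc$-dimension is equivalent to derived $C$-reflexivity, combined with the adjunction $\R\Hom_R(M,C)\simeq \R\Hom_{R/I}(M,K)[-g]$. This is not circular, since that characterization (Foxby, Christensen) is established independently of Golod's proposition, but it is machinery of essentially the same depth as the statement being proved, and it is precisely the point where your purely module-theoretic alternative is left open at its base case; Golod's 1984 argument performs this transfer of reflexivity by hand, using explicit resolutions and the closure of finite $\gc$-dimension under short exact sequences. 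What your route buys is brevity and naturality of all identifications (both halves of (ii), and the numerical formula via Theorem \ref{G3}(i), fall out of one derived isomorphism); what the original buys is self-containedness at the level of modules, which is the setting the rest of the paper works in.
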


We first present a generalization of \cite[Theorem 4.1]{Sc} for
modules of finite $\gc$-dimension.
\begin{prop}
Let $R$ be a Cohen-Macaulay local ring of dimension $d$. Suppose
that $M$ is an $R$--module of finite $\gc$-dimension and that $\fc$
is a $\gc$-Gorenstein ideal of $R$. Assume that $M$ is linked by
$\fc$ and that $n$ is a positive integer $n$. Then the following
statements are equivalent.
\begin{itemize}
\item[(i)]{$M$ satisfies $(S_n)$.}
\item[(ii)]{$\hh^i_\fm(\lambda_{R/\fc} M)=0$ for all $i$, $\dim R/\fc-n<i<\dim R/\fc$.}
\end{itemize}
\end{prop}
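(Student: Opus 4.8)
The plan is to pass to the quotient ring $\ov R := R/\fc$ and to reduce the statement to the corresponding result over $\ov R$, namely \cite[Theorem 4.2]{DS}, which settles exactly this equivalence for a horizontally linked module of finite Gorenstein dimension over a Cohen-Macaulay local ring. The crucial observation driving the reduction is that the hypothesis that $\fc$ is $\gc$-Gorenstein trivializes the induced semidualizing module: with $K := \Ext^{\gr(\fc)}_R(R/\fc,C)$ as in Theorem \ref{G2}, the remark preceding Theorem \ref{G2} gives $K \cong R/\fc = \ov R$, so that the $\g_K$-dimension over $\ov R$ is nothing but the ordinary Gorenstein dimension over $\ov R$.

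First I would check that $\ov R$ is itself Cohen-Macaulay. Since $\fc$ is $\gc$-Gorenstein it is in particular $\gc$-perfect, so $\gr(\fc) = \gkd_R(R/\fc)$; over the Cohen-Macaulay ring $R$ one has $\gr_R(R/\fc) = \height\fc = d - \dim(R/\fc)$ and, by Theorem \ref{G3}(ii), $\gkd_R(R/\fc) = d - \depth_R(R/\fc)$. Equating these forces $\depth_R(R/\fc) = \dim(R/\fc)$, i.e. $\ov R$ is Cohen-Macaulay of dimension $\dim R/\fc$. Next, since $\fc \subseteq \ann_R(M)$, the module $M$ is an $\ov R$-module, and because $\gkd_R(M) < \infty$, Theorem \ref{G2}(ii) yields $\gkkd_{\ov R}(M) < \infty$; invoking $K \cong \ov R$ this says precisely that $\gd_{\ov R}(M) < \infty$. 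Finally, that $M$ is linked by $\fc$ means by definition that $M$ is horizontally linked as an $\ov R$-module, and its link $\lambda_{R/\fc}M$ is exactly the horizontal link of $M$ over $\ov R$.

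At this point \cite[Theorem 4.2]{DS}, applied to the horizontally linked module $M$ of finite Gorenstein dimension over the Cohen-Macaulay local ring $\ov R$, gives that $M$ satisfies $(S_n)$ over $\ov R$ if and only if $\hh^i_{\ov\fm}(\lambda_{R/\fc}M) = 0$ for all $i$ with $\dim\ov R - n < i < \dim\ov R$, where $\ov\fm = \fm/\fc$. The remaining points are the routine transfers between $R$ and $\ov R$, which I expect to be the least interesting rather than the hardest step: the Serre condition $(S_n)$ for the $\ov R$-module $M$ is the same whether computed over $R$ or over $\ov R$, since $\Supp_R(M)$ consists of primes containing $\fc$ and both depth and dimension are unchanged under the correspondence $\fp \leftrightarrow \fp/\fc$ (the localizations off $\fc$ being zero); and for any $\ov R$-module $N$ one has $\hh^i_\fm(N) \cong \hh^i_{\ov\fm}(N)$ because the relevant ideals have the same radical modulo $\fc$. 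Since $\dim\ov R = \dim R/\fc$, the cohomological range translates verbatim and the equivalence (i)$\same$(ii) follows. The genuinely essential input is the identification $K\cong\ov R$; once it is in place the argument is change-of-rings bookkeeping built on the previously established linkage theorem over $\ov R$.
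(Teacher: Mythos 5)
Your proposal is correct and takes essentially the same route as the paper's own proof: both reduce to $\ov R=R/\fc$ via Golod's theorem (Theorem \ref{G2}), using that $\fc$ being $\gc$-Gorenstein makes the induced semidualizing module trivial so that $\gd_{R/\fc}(M)<\infty$, observe that $R/\fc$ is Cohen-Macaulay, and then invoke \cite[Theorem 4.2]{DS} together with the base-change isomorphism $\hh^i_{\fm}(\lambda_{R/\fc}M)\cong\hh^i_{\fm/\fc}(\lambda_{R/\fc}M)$. The only difference is that you spell out details the paper leaves implicit, namely the verification that $R/\fc$ is Cohen-Macaulay and the transfer of the condition $(S_n)$ between $R$ and $R/\fc$.
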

\begin{proof}
As $\fc$ is $\gc$-Gorenstein and $\gkd_R(M)<\infty$, it follows from
Theorem \ref{G2} that $\gd_{R/\fc}(M)<\infty$. Note that $R/\fc$ is
a Cohen-Macaulay ring. By \cite[Theorem 4.2]{DS}, $M$ satisfies
$(S_n)$ if and only if $\hh^i_{\fm}(\lambda_{R/\fc}
M)\cong\hh^i_{\fm/\fc}(\lambda_{R/\fc} M)=0$ for all $i$, $\dim
R/\fc-n<i<\dim R/\fc$.
\end{proof}
The reduced grade of a module $M$ with respect to a semidualizing
$C$ defined as follows
$$\rgr(M,C)=\inf\{i>0\mid \Ext^i_R(M,C)\neq0\}.$$
We denote by $\rgr(M)$, the reduced grade of $M$ with respect to
$R$. If $\Ext^i_R(M,C)=0$ for all $i>0$, then $\rgr(M,C)=+\infty$.
Note that if $M$ has a finite and positive $\gc$-dimension, then
$\rgr(M,C)\leq\gkd_R(M)$ by Theorem \ref{G3}.

Recall that two $R$-modules $M$ and $N$ are said to be in the same
even linkage class, or evenly linked, if there is a chain of even
length of linked modules that starts with $M$ and ends with $N$. The
following result, shows that the condition $\widetilde{S}_n$ is
preserved under even linkage.
\begin{thm}\label{prop}
Let $R$ be a local ring, $C$ a semidualizing $R$--module and
$\fc_1$, $\fc_2$ two $\gc$-Gorenstein ideals. Suppose that $M_1$,
$M$, and $M_2$ are $R$--modules such that
$M_1\underset{\fc_1}{\thicksim}M$ and
$M\underset{\fc_2}{\thicksim}M_2$. Assume that $\gkd_R(M)<\infty$
and that $n>0$ is an integer. Then $M_1$ satisfies $\widetilde{S}_n$
if and only if $M_2$ satisfies $\widetilde{S}_n$. In particular, if
$R$ is Cohen-Macaulay then $M_1$ is Cohen-Macaulay if and only if
$M_2$ is.
\end{thm}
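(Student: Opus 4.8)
The plan is to pass to the quotient rings $R/\fc_1$ and $R/\fc_2$, where the two linkages become ordinary horizontal linkages, and to show that each of the conditions ``$M_1$ satisfies $\widetilde{S}_n$'' and ``$M_2$ satisfies $\widetilde{S}_n$'' is equivalent to one and the same vanishing condition on the \emph{common} module $M$. Concretely, since $M$ is linked to $M_j$ by $\fc_j$, the ideal $\fc_j$ annihilates $M$, and $M$ is horizontally linked over $S_j:=R/\fc_j$ to $M_j$, so that $M_j=\lambda_{S_j}M$. As $\fc_j$ is $\gc$-Gorenstein, Golod's Theorem \ref{G2} shows that $K_j:=\Ext^{g_j}_R(R/\fc_j,C)$, with $g_j:=\gr(\fc_j)$, is a semidualizing $S_j$-module and that $\gkkd_{S_j}(M)=\gkd_R(M)-g_j<\infty$; moreover $\fc_j$ being $\gc$-Gorenstein forces $K_j\cong S_j$. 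Consequently the Auslander class $\mathcal{A}_{K_j}$ is the class of \emph{all} $S_j$-modules, so the requirement ``$\lambda M\in\mathcal{A}_{K_j}$'' needed for the next step is automatic.

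Next I would apply the single-linkage Serre criterion over $S_j$. Working with the semidualizing module $K_j$ and the horizontally linked module $M$ (of finite $\g_{K_j}$-dimension, with $\lambda M=M_j\in\mathcal{A}_{K_j}$), Theorem \ref{th1} gives that $M_j$ satisfies $\widetilde{S}_n$ if and only if $\Ext^i_{S_j}(M,K_j)=0$ for $0<i<n$. The change of rings isomorphism for $\gc$-perfect ideals (which refines Theorem \ref{G2}) reads $\Ext^i_{S_j}(M,K_j)\cong\Ext^{i+g_j}_R(M,C)$ for all $i\ge0$, so the criterion becomes: $M_j$ satisfies $\widetilde{S}_n$ if and only if $\Ext^k_R(M,C)=0$ for all $k$ with $g_j<k<g_j+n$.

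The crux is then to see that these two ``windows'' coincide, that is, $g_1=g_2$. The displayed isomorphism gives, for each $j$, that $\Ext^k_R(M,C)=0$ for $k<g_j$ (negative index on the $S_j$ side), while at $i=0$ one gets $\Ext^{g_j}_R(M,C)\cong\Hom_{S_j}(M,K_j)\cong\Hom_{S_j}(M,S_j)\ne0$, the non-vanishing because $M$ is a nonzero torsionless $S_j$-module by Theorem \ref{MS}. Thus for each $j$ the integer $g_j$ equals $\min\{k:\Ext^k_R(M,C)\ne0\}$, an invariant of $M$ alone; hence $g_1=g_2=:g$, and both criteria read ``$\Ext^k_R(M,C)=0$ for $g<k<g+n$''. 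This proves that $M_1$ satisfies $\widetilde{S}_n$ exactly when $M_2$ does. For the Cohen--Macaulay statement, $g_1=g_2=g$ forces $\dim S_1=\dim S_2=\dim R-g$, so over a Cohen--Macaulay $R$ I would apply the equivalence with $n=\dim R-g$: each $S_j$ is then Cohen--Macaulay and the horizontally linked (hence unmixed) module $M_j$ is maximal Cohen--Macaulay over $S_j$ precisely when it satisfies $\widetilde{S}_{\dim R-g}$, giving that $M_1$ is Cohen--Macaulay if and only if $M_2$ is.

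I expect the main obstacle to be the grade identity $g_1=g_2$: the two Gorenstein ideals may a priori have different grades, and it is precisely the torsionlessness of the common module $M$ over each $S_j$ that forces $\gr(\fc_j)$ to equal the single intrinsic invariant $\min\{k:\Ext^k_R(M,C)\ne0\}$. The subordinate technical input is the change-of-rings isomorphism $\Ext^i_{S_j}(M,K_j)\cong\Ext^{i+g_j}_R(M,C)$, which is exactly where the $\gc$-perfectness of $\fc_j$ is used, and which lets me compare the two \emph{a priori} unrelated conditions $\Ext^i_{S_1}(M,K_1)=0$ and $\Ext^i_{S_2}(M,K_2)=0$ by transporting both to the fixed module $M$ over the fixed ring $R$.
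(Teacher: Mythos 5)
Your proposal is correct and takes essentially the same route as the paper: pass to the quotient rings $R/\fc_1$ and $R/\fc_2$, use Golod's change-of-rings isomorphism $\Ext^i_{R/\fc_j}(M,R/\fc_j)\cong\Ext^{i+\gr(\fc_j)}_R(M,C)$ to identify both single-linkage criteria with one and the same vanishing window for $\Ext^\ast_R(M,C)$, and invoke the horizontal-linkage Serre criterion over each quotient (the paper cites \cite[Proposition 2.6]{DS}, which is exactly your application of Theorem \ref{th1} once one notes $K_j\cong R/\fc_j$ and that the Auslander class condition is then automatic). The only real difference is that you prove the grade identity $\gr(\fc_1)=\gr(\fc_2)$ inline, from torsionlessness of $M$ over each quotient together with the degree-zero change-of-rings isomorphism, whereas the paper simply cites \cite[Lemma 5.8]{DS} for this fact.
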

\begin{proof}
As $\fc_1$ and $\fc_2$ are $\gc$-Gorenstein ideals and
$\gkd_R(M)<\infty$ we have $\gd_{R/\fc_1}(M)<\infty$ and
$\gd_{R/\fc_2}(M)<\infty$ by Theorem \ref{G2}. Note that by
\cite[Lemma 5.8]{DS}, we denote the common value of $\gr_R(\fc_1)$
and $\gr_R(\fc_2)$ by $k$. By \cite[Corollary]{G},
\begin{equation}\tag{\ref{prop}.1}
\Ext^i_{R/\fc_1}(M,R/\fc_1)\cong\Ext^{i+k}_{R}(M,C)\cong\Ext^i_{R/\fc_2}(M,R/\fc_2),
\end{equation}
for all $i>0$. By (\ref{prop}.1),
$\rgr_{R/\fc_1}(M)=\rgr_{R/\fc_2}(M)$. By \cite[Proposition
2.6]{DS}, $M_1=\lambda_{R/\fc_1}M$ satisfies $\widetilde{S}_n$ if
and only if $\rgr_{R/\fc_1}(M)=\rgr_{R/\fc_2}(M)\geq n$, and this is
equivalent to saying that $M_2=\lambda_{R/\fc_2}M$ satisfies
$\widetilde{S}_n$ by using \cite[Proposition 2.6]{DS} again.
\end{proof}
In the following, we express the associated primes of the
$\Ext^{\tiny{\rgr(M,C)}}_R(M,C)$ for a horizontally linked module
$M$ of finite and positive $\gc$-dimension in terms of $\lambda M$,
which is a generalization of \cite[Lemma 2.1]{DS}. For an integer
$n>0$, we denote the compositions
$\mathcal{T}_n^{C}:=\trk\Omega^{n-1}$.
\begin{lem}\label{lem4}
Let $M$ be a horizontally-linked $R$--module of finite and positive
$\gc$-dimension. Set $n=\rgr_R(M,C)$. If $\lambda M\in\mathcal{A}_C$
(e.g. $\pd_R(\lambda M)<\infty$), then
$$\Ass_R(\Ext^n_R(M,C))=\{\fp\in\Spec R\mid \gcpd(M_\fp)\neq0, \depth_{R_\fp}((\lambda M)_\fp)=n=\rgr_{R_\fp}(M_\fp,C_\fp)\}.$$
\end{lem}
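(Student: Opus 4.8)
The plan is to reduce to a local depth computation and then read everything off the cochain complex $\Hom_R(P_\bullet,C)$ attached to a projective resolution $P_\bullet\to M$. First I would localize. For every prime $\fp$ we have $(\Ext^n_R(M,C))_\fp=\Ext^n_{R_\fp}(M_\fp,C_\fp)$, and since $\Ext^i_R(M,C)=0$ for $0<i<n$ this vanishing persists after localizing; hence $\fp\in\Supp_R\Ext^n_R(M,C)$ exactly when $\Ext^n_{R_\fp}(M_\fp,C_\fp)\neq0$, which is in turn equivalent to the pair of conditions $\gcpd(M_\fp)\neq0$ and $\rgr_{R_\fp}(M_\fp,C_\fp)=n$. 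Thus the right-hand set is $\{\fp\in\Supp_R\Ext^n_R(M,C)\mid\depth_{R_\fp}((\lambda M)_\fp)=n\}$, and because $\Ass\subseteq\Supp$ it is enough to prove, for each such $\fp$, that $\fp\in\Ass_R\Ext^n_R(M,C)$ (that is, $\depth_{R_\fp}\Ext^n_{R_\fp}(M_\fp,C_\fp)=0$) if and only if $\depth_{R_\fp}((\lambda M)_\fp)=n$. I would fix such a $\fp$; all modules and exact sequences below are formed globally and then localized at $\fp$, so that depths are taken over $R_\fp$, and to lighten notation I will argue as if $R$ were local with maximal ideal $\fp$ and set $t:=\depth R$. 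A decisive preliminary is the strict inequality $n<t$: by Theorem \ref{MS} the module $M$ is a syzygy, so $\depth_R M\geq1$, and then Theorem \ref{G3} gives $n\leq\gkd_R(M)=t-\depth_R M\leq t-1$.

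Next I would build the complex. Applying $(-)^\triangledown=\Hom_R(-,C)$ to $P_\bullet\to M$ produces a complex of finite direct sums of copies of $C$ whose $i$-th cohomology is $\Ext^i_R(M,C)$; let $B^i$ and $Z^i$ denote its coboundaries and cocycles, so that $0\to B^n\to Z^n\to\Ext^n_R(M,C)\to0$. Since $\Ext^i_R(M,C)=0$ for $0<i<n$, splicing the low-degree part exhibits $B^1$ as an $(n-1)$st $C$-syzygy of $B^n$. Now $\lambda M\in\mathcal{A}_C$ forces $\Tr M\in\mathcal{A}_C$ (two-out-of-three in the Auslander class applied to (\ref{d1}.2), Example \ref{example1}(i)); tensoring (\ref{d1}.2) with $C$ therefore stays exact and, via Remark \ref{remark3}(i), identifies $\lambda M\otimes_R C$ with $B^1$. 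Using $\depth_R(\lambda M)=\depth_R(\lambda M\otimes_R C)$ (Lemma \ref{lem2}, valid since $\lambda M\in\mathcal{A}_C$) and iterating the depth lemma along this $C$-syzygy chain, I obtain the formula $\depth_R(\lambda M)=\min\{t,\depth_R B^n+n-1\}$.

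The last step is a short depth count resting on two facts about $Z^n$. First, $Z^n$ is the kernel of a map between finite direct sums of copies of $C$, hence a second $C$-syzygy, so $\depth_R Z^n\geq\min\{2,t\}=2$ (recall $t\geq n+1\geq2$) by Proposition \ref{t1}. Second, $B^n\subseteq Z^n$ forces $\depth_R B^n\geq1$, since $\Ass_R B^n\subseteq\Ass_R Z^n$. Because $n<t$, the displayed formula shows $\depth_R(\lambda M)=n\Leftrightarrow\depth_R B^n=1$, while $\depth_R B^n\geq2$ gives $\depth_R(\lambda M)>n$. Feeding $\depth_R Z^n\geq2$ into $0\to B^n\to Z^n\to\Ext^n_R(M,C)\to0$, the depth lemma yields $\depth_R\Ext^n_R(M,C)=0$ when $\depth_R B^n=1$ and $\depth_R\Ext^n_R(M,C)\geq1$ when $\depth_R B^n\geq2$. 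Chaining these equivalences gives $\depth_R\Ext^n_R(M,C)=0\Leftrightarrow\depth_R B^n=1\Leftrightarrow\depth_R(\lambda M)=n$, which is exactly what is needed; this is the $C$-coefficient analogue of the computation behind \cite[Lemma 2.1]{DS}.

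I expect the real work to be bookkeeping rather than a single hard step: verifying that tensoring (\ref{d1}.2) by $C$ remains exact (this is precisely where $\Tr M\in\mathcal{A}_C$, deduced from $\lambda M\in\mathcal{A}_C$, is used) and that the formula for $\depth_R(\lambda M)$ is an equality and not merely an inequality. The one structurally load-bearing input is the strict inequality $n<t$; without it the boundary case $t=n$ would break the equivalence (there $\depth_R B^n\geq2$ need not force $\depth_R(\lambda M)>n$), and it is exactly the syzygy property furnished by horizontal linkage (Theorem \ref{MS}) that excludes this case.
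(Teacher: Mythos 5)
Your argument is correct and is essentially the paper's own proof in different notation: both dualize the projective resolution into a complex of copies of $C$, splice it using the vanishing of $\Ext^i_R(M,C)$ for $0<i<n$, transfer depth across $-\otimes_RC$ via the Auslander-class hypothesis (Remark \ref{remark3}(i) and Lemma \ref{lem2}), establish the key inequality $n<\depth R_\fp$ from the syzygy property furnished by Theorem \ref{MS}, and finish with a depth chase; your $B^n$, $Z^n$, $B^{n+1}$ are the paper's spliced $\trk M$, $\trc_{n}M$ and $L$ in different clothing. One caveat that does not affect validity: the asserted equality $\depth_R(\lambda M)=\min\{t,\depth_R(B^n)+n-1\}$ is only guaranteed as an inequality $\geq$ when $\depth_R(B^n)+n-1>t$ (the localization $R_\fp$ need not be Cohen--Macaulay, so depths of modules can exceed $t$), but the two instances you actually invoke --- exact equality when $\depth_R(B^n)=1$, where every step of the iteration stays strictly below $t$, and the lower bound $\depth_R(\lambda M)\geq\min\{t,n+1\}>n$ when $\depth_R(B^n)\geq2$ --- are both valid.
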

\begin{proof} First note that $\lambda M$ is a first syzygy of $\Tr M$ and so $\Tr M\in\mathcal{A}_C$ by Example \ref{example1}(i).
Hence by Lemma \ref{lem2} and Remark \ref{remark3}(i)
\begin{equation}\tag{\ref{lem4}.1}
\depth_{R_\fp}((\Tr M)_\fp)=\depth_{R_\fp}((\Tr
M)_\fp\otimes_{R_\fp}C_\fp)=\depth_{R_\fp}((\trk M)_\fp),
\end{equation}
for all $\fp\in\Spec R$.
 Let $\cdots\rightarrow P_i\rightarrow\cdots\rightarrow
P_0\rightarrow M\rightarrow0$ be the minimal projective resolution
of $M$. As $\Ext^i_R(M,C)=0$ for all $0<i<n$, applying functor
$(-)^{\triangledown}=\Hom_R(-,C)$ on the minimal projective
resolution of $M$ implies the following exact sequences:
\begin{equation}\tag{\ref{lem4}.2}
0\rightarrow\trk M\rightarrow
(P_2)^{\triangledown}\rightarrow\cdots\rightarrow(P_n)^{\triangledown}\rightarrow\trc_{n}M\rightarrow0.
\end{equation}
\begin{equation}\tag{\ref{lem4}.3}
0\rightarrow\Ext^n_R(M,C)\rightarrow\trc_nM\rightarrow L\rightarrow0
\end{equation}
\begin{equation}\tag{\ref{lem4}.4}
0\rightarrow L\rightarrow
\overset{m}{\oplus}C\rightarrow\trc_{n+1}M\rightarrow0
\end{equation}
By Theorem \ref{MS}, $M$ is a first syzygy. Therefore, if
$\gcpd(M_\fp)\neq0$ for some $\fp\in\Spec R$, then it follows from
Theorem \ref{G3} that
\[\begin{array}{rl}\tag{\ref{lem4}.5}
n\leq\rgr(M_\fp,C_\fp)\leq\gcpd(M_\fp)\\
=\depth R_\fp-\depth_{R_\fp}( M_\fp)<\depth R_\fp.
\end{array}\]
Assume that $\fp\in\Ass_R(\Ext^n_R(M,C))$ so that
$\rgr_{R_\fp}(M_\fp,C_\fp)=n$ and $\gcpd(M_\fp)\neq0$ by Theorem
\ref{G3}. It follows from the exact sequence (\ref{lem4}.3) that
$\depth_{R_\fp}((\trc_{n} M)_\fp)=0$. Note that
$\depth_{R_\fp}(((P_i)^{\triangledown})_\fp)=\depth_{R_\fp}(C_\fp)=\depth
R_\fp$ for all $i$.  By localizing the exact sequence (\ref{lem4}.2)
at $\fp$, we conclude from (\ref{lem4}.5) that $\depth_{R_\fp}((\trk
M)_\fp)=n-1$. Hence, by (\ref{lem4}.1) and (\ref{lem4}.5),
$\depth_{R_\fp}((\lambda M)_\fp)=n$.

Now assume that $\fp\in\Spec R$ such that $\gcpd(M_\fp)\neq0$ and
$\depth_R((\lambda M)_\fp)=n=\rgr_{R_\fp}(M_\fp,C_\fp)$. Hence
$\depth_{R_\fp}((\trk M)_\fp)=n-1$ by (\ref{lem4}.1) and
(\ref{lem4}.5). By localizing the exact sequence (\ref{lem4}.2) at
$\fp$, we conclude from (\ref{lem4}.5) that
$\depth_{R_\fp}((\trc_{n}M)_\fp)=0$. As
$\depth_{R_\fp}(C_\fp)=\depth R_\fp>0$, we conclude from the exact
sequence (\ref{lem4}.4) that $\depth_{R_\fp}(L_\fp)>0$. It follows
from the exact sequence (\ref{lem4}.3) that
$\depth_{R_\fp}(\Ext^n_R(M,C)_\fp)=0$. In other words,
$\fp\in\Ass_R(\Ext^n_R(M,C))$.
\end{proof}
In order to describe the next result, we recall some notations from
the literature. An $R$--module $S$ is called secondary if, for any
$r\in R$, the multiplication map $S\overset{r.}{\longrightarrow} S$
is either surjective or nilpotent; in this case $\fp:=\sqrt{0:_RS}$
is a prime ideal and $S$ is called a $\fp$-secondary module. It is
well-known that any Artinian module $T$ is minimally representable
as a sum of its secondary submodules $T=S_1+\cdots + S_n$ such that
$S_i$ is a $\fp_i$--secondary, $1\leq i\leq n$, for some distinct
prime ideals $\fp_1, \cdots, \fp_n$. These primes are said to be
attached and are denoted by $\Att_R(T)$ \cite{Mc}. We denote the
non-Cohen-Macaulay locus of $M$ by
$$\cm(M):=\{\fp\in\Spec R\mid M_\fp \text{ is not a Cohen-Macaulay }
R_\fp\text{-module}\}.$$ For a non--Cohen-Macaulay module $M$ of
dimension $d$ over a local ring $(R, \fm)$, set
$$\cc(M)=\sup\{i<d \mid\hh^i_\fm(M)\neq0\}.$$

As an application, one can describe the attached prime ideals of
$\hh^{\tiny{\cc(M)}}_\fm(M)$ for a non-Cohen-Macaulay horizontally
linked $R$--module $M$.
\begin{cor}\label{cor2}
Let $R$ be a Cohen-Macaulay local ring of dimension $d$ with
canonical module $\omega_R$ and let $M$ be an $R$--module which is a
horizontally linked non-Cohen-Macaulay. If $\gd_R(\lambda M)<\infty$
(e.g. $R$ is Gorenstein), then
$$\Att_R(\hh^{\tiny{\cc(M)}}_\fm(M))=\{\fp\in\Spec R\mid \fp\in\cm(M), \depth_{R_\fp}((\lambda M)_\fp)=d-\cc(M)=\dim R_\fp-\cc(M_\fp)\}.$$
\end{cor}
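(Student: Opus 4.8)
The plan is to reduce everything to Lemma \ref{lem4} applied with the semidualizing module $C=\omega_R$, using local duality to pass between the local cohomology of $M$ and the modules $\Ext^\bullet_R(M,\omega_R)$. First I would check the hypotheses of Lemma \ref{lem4}. Since $\omega_R$ is a canonical module, $\gkd_R(N)<\infty$ for every finite $R$--module $N$ (the fact recorded after Remark \ref{rem1}), so in particular $\gkd_R(M)<\infty$. Being horizontally linked, $M$ is stable and a syzygy module by Theorem \ref{MS}; hence it embeds in a free module, $\Ass_R(M)\subseteq\Min R$, and equidimensionality of the Cohen-Macaulay ring $R$ forces $\dim_R(M)=d$. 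As $M$ is non-Cohen-Macaulay, $\depth_R(M)<\dim_R(M)=d=\depth R$, so Theorem \ref{G3}(ii) makes $\gkd_R(M)=d-\depth_R(M)$ positive. Finally $\gd_R(\lambda M)<\infty$ yields $\lambda M\in\mathcal{A}_{\omega_R}$ by Example \ref{example1}(ii), so all the hypotheses are in force.

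Next I would pin down the relevant index. Setting $n=\rgr_R(M,\omega_R)$, the Local Duality Theorem \ref{th8} realizes $\hh^i_\fm(M)$ as the Matlis dual of the finite module $\Ext^{d-i}_R(M,\omega_R)$, so the two vanish simultaneously; reindexing $j=d-i$ in the definition of reduced grade then gives $n=d-\cc(M)$. Combining the standard identity $\Att_R(\Hom_R(N,\E_R(k)))=\Ass_R(N)$ for finite $N$ with Local Duality applied to $\Ext^{n}_R(M,\omega_R)$ produces
\[
\Att_R\big(\hh^{\cc(M)}_\fm(M)\big)=\Ass_R\big(\Ext^{n}_R(M,\omega_R)\big),
\]
and Lemma \ref{lem4} (with $C=\omega_R$) describes the right-hand side as the set of $\fp$ with $\gcpd(M_\fp)\neq0$, $\depth_{R_\fp}((\lambda M)_\fp)=n$, and $\rgr_{R_\fp}(M_\fp,(\omega_R)_\fp)=n$.

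The remaining and most delicate step is to match these three local conditions with those in the statement. Every prime in play lies in $\Supp_R(M)$, and there $M_\fp$ is again a nonzero syzygy over the Cohen-Macaulay local ring $R_\fp$, so the same equidimensionality argument gives $\dim_{R_\fp}(M_\fp)=\dim R_\fp$. This is the crux: it forces $\gcpd(M_\fp)=\dim R_\fp-\depth_{R_\fp}(M_\fp)\neq0$ to be equivalent to $\depth_{R_\fp}(M_\fp)<\dim_{R_\fp}(M_\fp)$, that is, to $\fp\in\cm(M)$; and, applying Local Duality over $R_\fp$ exactly as before, it turns $\rgr_{R_\fp}(M_\fp,(\omega_R)_\fp)=n$ into $n=\dim R_\fp-\cc(M_\fp)$. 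With $n=d-\cc(M)$ these are precisely the conditions defining the asserted set, so the two sets coincide. I expect the main obstacle to be this translation of conditions, which relies entirely on the syzygy property of $M$ guaranteeing that $M_\fp$ has maximal dimension at every prime of its support; the rest is bookkeeping with local duality.
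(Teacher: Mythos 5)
Your proposal is correct and takes essentially the same route as the paper's proof: both apply Lemma \ref{lem4} with $C=\omega_R$, identify $\Att_R(\hh^{\cc(M)}_\fm(M))=\Ass_R(\Ext^{\rgr(M,\omega_R)}_R(M,\omega_R))$ via Sharp's result and the Local Duality Theorem \ref{th8}, use Example \ref{example1}(ii) for $\lambda M\in\mathcal{A}_{\omega_R}$, and exploit the syzygy property of a horizontally linked module to get $\dim_{R_\fp}(M_\fp)=\dim R_\fp$, which translates $\gcpd(M_\fp)\neq 0$ into $\fp\in\cm(M)$ and $\rgr_{R_\fp}(M_\fp,(\omega_R)_\fp)$ into $\dim R_\fp-\cc(M_\fp)$. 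The only difference is that you write out the hypothesis-checking and the dictionary between the local conditions, which the paper compresses into ``the assertion is clear.''
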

\begin{proof}
As $M$ is horizontally linked, it is a first syzygy and so $\dim
M_\fp=\dim R_\fp$ for all $\fp\in \Supp(M)$. By \cite[3.4]{Sh}, and
the Local Duality Theorem \ref{th8},
$$\Att_R(\hh^{\tiny{\cc(M)}}_\fm(M))=\Ass_R(\Ext^{\tiny{\rgr(M,\omega_R)}}_R(M,\omega_R)).$$
By Example \ref{example1}(ii), $\lambda M\in\mathcal{A}_{\omega_R}$.
Now the assertion is clear by Lemma \ref{lem4} and the Local Duality
Theorem.
\end{proof}
In the following result, we investigate the relation between the
Serre condition $\widetilde{S}_n$ on a horizontally linked module
with the vanishing of certain cohomology modules of its linked
module.
\begin{thm}\label{th1}
Let $M$ be a stable $R$--module of finite $\gc$-dimension and
$\lambda M\in\mathcal{A}_C$ (e.g. $\pd_R(\lambda M)<\infty$). For an
integer $n>0$, the following statements hold true.
\begin{enumerate}[(i)]
           \item{$M$ satisfies $\widetilde{S}_n$ if and only if $M$ is horizontally linked and $\rgr_R(\lambda M)\geq n$;}
           \item{If $M$ is horizontally linked, then $\rgr(M,C)\geq
           n$ if and only if $\lambda M$ satisfies
           $\widetilde{S}_n$.}
\end{enumerate}
\end{thm}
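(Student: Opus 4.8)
The plan is to treat the two equivalences separately, in each case reducing to an Auslander--Bridger-type criterion already proved above and then translating the resulting vanishing of $\Ext$ into the stated condition by means of the fundamental sequences (\ref{d1}.1) and (\ref{d1}.2). The single fact I would record first, and use in both parts, is that $\Tr M\in\mathcal{A}_C$: since (\ref{d1}.2) exhibits $\lambda M$ as the first syzygy of $\Tr M$ in $0\to\lambda M\to P_1^*\to\Tr M\to0$ with $P_1^*$ projective (hence in $\mathcal{A}_C$), the hypothesis $\lambda M\in\mathcal{A}_C$ together with the two-out-of-three closure of the Auslander class (Example \ref{example1}(i)) forces $\Tr M\in\mathcal{A}_C$.

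For (i) I would apply Proposition \ref{t1} to $M$ itself. As $M$ has finite $\gc$-dimension, the hypothesis of part (b) is satisfied (indeed on all of $\Spec R$), so $M$ satisfies $\widetilde{S}_n$ precisely when $\Ext^i_R(\trk M,C)=0$ for $1\le i\le n$. I would then convert to $R$-coefficients: by Remark \ref{remark3}(i) one has $\trk M\cong\Tr M\otimes_R C$, and since $\Tr M\in\mathcal{A}_C$ the Foxby equivalence yields
\begin{equation*}
\Ext^i_R(\trk M,C)\cong\Ext^i_R(\Tr M\otimes_R C,C)\cong\Ext^i_R(\Tr M,R)
\end{equation*}
for all $i$. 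Hence $\widetilde{S}_n$ for $M$ is equivalent to $\Ext^i_R(\Tr M,R)=0$ for $1\le i\le n$. Finally I would read off this range: for $i=1$, Theorem \ref{MS} together with the standing stability of $M$ makes $\Ext^1_R(\Tr M,R)=0$ equivalent to $M$ being horizontally linked, while for $2\le i\le n$ the sequence (\ref{d1}.2) gives $\Ext^i_R(\Tr M,R)\cong\Ext^{i-1}_R(\lambda M,R)$, so that their vanishing is exactly $\rgr_R(\lambda M)\ge n$. Assembling these gives (i).

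For (ii) I would instead apply Corollary \ref{cor7} to the stable module $\lambda M\in\mathcal{A}_C$. Granting its finiteness hypothesis for $\lambda M$ (treated below), the corollary says that $\lambda M$ satisfies $\widetilde{S}_n$ if and only if $\lambda M$ is horizontally linked and $\Ext^i_R(\lambda(\lambda M),C)=0$ for $0<i<n$. Because $M$ is horizontally linked, $\lambda^2M\cong M$; this gives $\lambda^3M\cong\lambda M$, so $\lambda M$ is itself horizontally linked, and it gives $\Ext^i_R(\lambda(\lambda M),C)\cong\Ext^i_R(M,C)$. The right-hand condition therefore collapses to $\Ext^i_R(M,C)=0$ for $0<i<n$, i.e. $\rgr(M,C)\ge n$, which is precisely (ii).

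The step I expect to be the main obstacle is verifying the finiteness hypothesis of Corollary \ref{cor7} for $\lambda M$, namely $\gd_{R_\fp}((\lambda M)_\fp)<\infty$ for $\fp\in X^{n-1}(R)$. My plan is to prove the stronger global statement $\gd_R(\lambda M)<\infty$; this is really the assertion that the two hypotheses $\gkd_R(M)<\infty$ and $\lambda M\in\mathcal{A}_C$ already force $M$, and hence $\lambda M$, to have finite Gorenstein dimension. Concretely: from $\gkd_R(M)<\infty$ the finite-dimension analogue of Remark \ref{rem1} gives $\gkd_R(\trk M)<\infty$; writing $\trk M\cong\Tr M\otimes_R C$ with $\Tr M\in\mathcal{A}_C$, the Foxby equivalence transports this finite $\gc$-dimension back to $\gd_R(\Tr M)<\infty$; and then $\lambda M=\Omega\Tr M$, a syzygy of $\Tr M$, also has finite Gorenstein dimension. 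One might alternatively route (ii) through Proposition \ref{t13} applied to $\lambda M$, using $\Ext^{i+1}_R(\Tr(\lambda M),C)\cong\Ext^i_R(M,C)$ coming from the linkage sequence $0\to M\to Q\to\Tr(\lambda M)\to0$; but the nontrivial implication there demands finite injective dimension of $C$ on $X^{n-1}(R)$, which is not obviously available, so I would commit to the Gorenstein-dimension route above.
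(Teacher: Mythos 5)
Your part (i) is essentially the paper's own argument: you establish $\Tr M\in\mathcal{A}_C$ from (\ref{d1}.2) and the two-out-of-three property, apply Proposition \ref{t1} to $M$, identify $\Ext^i_R(\trk M,C)\cong\Ext^i_R(\Tr M,R)$ (the paper gets this from the adjunction spectral sequence $\Ext^p_R(\Tor^R_q(\Tr M,C),C)\Rightarrow\Ext^{p+q}_R(\Tr M,R)$, which collapses precisely because $\Tr M\in\mathcal{A}_C$ kills the Tor modules --- the same content as your appeal to adjointness), and then read off the conclusion from Theorem \ref{MS} and the shift $\Ext^i_R(\Tr M,R)\cong\Ext^{i-1}_R(\lambda M,R)$ coming from (\ref{d1}.2). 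That half is correct.

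Part (ii) is where you genuinely diverge, and where there is a gap. The reduction to Corollary \ref{cor7} applied to $\lambda M$ is a clean idea (stability and horizontal linkage of $\lambda M$, and the identification $\lambda^2M\cong M$, are all fine), but, as you anticipate, everything hinges on verifying $\gd_{R_\fp}((\lambda M)_\fp)<\infty$ on $\X^{n-1}(R)$, and your proposed verification rests on two statements that the paper neither proves nor cites and that are not routine. First, the ``finite-dimension analogue of Remark \ref{rem1}'': that remark is only the $\gc$-dimension-\emph{zero} statement, proved by chasing (\ref{d3}.1)--(\ref{d3}.3); the implication $\gkd_R(M)<\infty\Rightarrow\gkd_R(\trk M)<\infty$ is a genuine theorem (already for $C=R$ it is a nontrivial result of Auslander--Bridger) and does not follow from the same chase. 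Second, and more seriously, the claim that ``Foxby equivalence transports'' finite $\gc$-dimension of $\trk M\cong\Tr M\otimes_RC$ back to $\gd_R(\Tr M)<\infty$: what the paper makes available (Definition \ref{def1}, Example \ref{example1}) concerns only the functors $C\otimes_R-$ and $\Hom_R(C,-)$ on the Auslander class and says nothing about preservation of Gorenstein dimensions. The statement you need --- for $N\in\mathcal{A}_C$, finiteness of $\gkd_R(N\otimes_RC)$ forces finiteness of $\gd_R(N)$ --- is a Holm--J\o rgensen/Takahashi--White-type theorem whose special case $C=\omega_R$ is exactly Foxby's theorem quoted in Example \ref{example1}(ii), so invoking it as an automatic feature of the equivalence is circular in spirit. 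The paper's proof of (ii) avoids all of this: for the forward direction it dualizes the minimal free resolution of $M$ by $(-)^{\triangledown}$ to exhibit $\trk M$ as an $(n-1)$st $C$-syzygy (hence $\widetilde{S}_{n-1}$ by Proposition \ref{t1}) and transfers depth from $\trk M\cong\Tr M\otimes_RC$ to $\Tr M$ by Lemma \ref{lem2}, so that $\lambda M=\Omega\Tr M$ satisfies $\widetilde{S}_n$; for the converse it uses Lemma \ref{lem4} on $\Ass_R\bigl(\Ext^{\rgr(M,C)}_R(M,C)\bigr)$, which needs only $\lambda M\in\mathcal{A}_C$ and finite $\gkd$, never finiteness of classical G-dimension. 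So your outline for (ii) could be completed, but only by importing external theorems; within the paper's toolkit it is incomplete, and the intended route is the one through Lemma \ref{lem4}, which your proposal does not use.
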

\begin{proof}
(i). By Proposition \ref{t1}, $M$ satisfies $\widetilde{S}_n$ if and
only if $\Ext^i_R(\trk M,C)=0$ for all $1\leq i\leq n$. As mentioned
in Remark \ref{remark3}(i), $\trk M\cong\Tr M\otimes_RC$. Similar to
the proof of \cite[Lemma 2.2]{AIT}, it follows from adjointness
isomorphism
 $$\R\Hom_R(\Tr M\otimes^{\emph{L}} C, C)\cong\R\Hom_R(\Tr M, \R\Hom_R(C, C))\cong\R\Hom_R(\Tr M, R),$$
 in the derived category of $R$, that there is a third quadrant spectral sequence
$$\E^{p,q}_2=\Ext^p_R(\Tor_q^R(\Tr M,C),C)\Rightarrow\Ext^{p+q}_R(\Tr M,R).$$  By Example \ref{example1}(i), $\Tr M\in\mathcal{A}_C$ and so
$\Tor_i^R(\Tr M,C)=0$ for all $i>0$. Hence $\E^{p,q}_2=0$ for all
$q>0$. Therefore, the spectral sequence collapses on $p$-axis and so
$$\Ext^i_R(\trk M,C)\cong\Ext^i_R(\Tr M\otimes_RC,C)\cong\Ext^i_R(\Tr
M,R),$$ for all $i\geq0$. Therefore, $M$ satisfies $\widetilde{S}_n$
if and only if $\Ext^i_R(\Tr M,R)=0$ for all $1\leq i\leq n$. Now
the assertion is clear by Theorem \ref{MS}.

(ii). Let $\cdots\rightarrow P_i\rightarrow\cdots\rightarrow
P_0\rightarrow M\rightarrow0$ be the minimal projective resolution
of $M$ and let $\Ext^i_R(M,C)=0$ for all $0<i<n$. Applying the
functor $(-)^{\triangledown}=\Hom_R(-,C)$ on the minimal projective
resolution of $M$, gives the exact sequence
$$0\rightarrow\trk M\rightarrow
(P_2)^{\triangledown}\rightarrow\cdots\rightarrow(P_n)^{\triangledown}.$$
Now it is easy to see that $\trk M(\cong\Tr M\otimes_R C)$ satisfies
$\widetilde{S}_{n-1}$. As $\Tr M \in\mathcal{A}_C$, by Lemma
\ref{lem2},$\Tr M$ satisfies $\widetilde{S}_{n-1}$. Therefore,
$\lambda M$ satisfies $\widetilde{S}_n$.

Conversely, assume that $\lambda M$ satisfies $\widetilde{S}_n$. If
$\gkd_R(M)=0$ then we have nothing to prove. Assume that
$\gkd_R(M)>0$. Set $k=\rgr(M,C)$ and suppose that
$\fp\in\Ass_R(\Ext^k_R(M,C))$. By Lemma \ref{lem4}, we have
$k=\depth_{R_\fp}((\lambda M)_\fp)$. Also we have
$k=\rgr_{R_\fp}(M_\fp,C_\fp)\leq\gcpd(M_\fp)<\depth R_\fp$. As
$\lambda M$ satisfies $\widetilde{S}_n$, it follows
$k=\depth_{R_\fp}((\lambda M)_\fp)\geq\min\{n,\depth R_\fp\}$ and so
$\depth R_\fp>n$. Therefore $\rgr(M,C)\geq n$.
\end{proof}
Let $R$ be a Gorenstein local ring and $M$ a horizontally linked
$R$--module. In \cite[Proposition 8]{MS}, it is shown that the
maximal Cohen-Macaulayness of $M$ and $\lambda M$ are equivalent. As
a consequence of Theorem \ref{th1}, we can generalize
\cite[Proposition 8]{MS} as follows.
\begin{cor}\label{cor5}
Let $R$ be a Cohen-Macaulay local ring of dimension $d$ and $M$ a
stable $R$--module of finite $\gc$-dimension. If $\lambda
M\in\mathcal{A}_C$ (e.g. $\mathcal{I}_{C}$-$\id_R(\lambda
M)<\infty$), then the following statements are equivalent.
\begin{enumerate}[(i)]
\item{$M$ is maximal Cohen-Macaulay;}
\item{$\lambda M$ is maximal Cohen-Macaulay and $M$ is horizontally linked;}
\item{$\lambda M$ satisfies ($S_n$) for some $n>d-\depth_R(M)$ and $M$ is horizontally linked.}
\end{enumerate}
\end{cor}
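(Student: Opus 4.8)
The plan is to read off all three equivalences from Theorem \ref{th1}, using the translation supplied by Theorem \ref{G3}(ii): since $R$ is Cohen-Macaulay we have $\depth R=d$, so $\gkd_R(M)=d-\depth_R(M)$, and therefore the statement ``$M$ is maximal Cohen-Macaulay'' is the same as $\gkd_R(M)=0$, equivalently $\Ext^i_R(M,C)=0$ for all $i>0$, i.e. $\rgr(M,C)=+\infty$. Throughout I would exploit that $M$ and $\lambda M$ are horizontally linked together: from $M\cong\lambda^2M$ one gets $\lambda M\cong\lambda^3M=\lambda^2(\lambda M)$, so once $M$ is horizontally linked the conditions $(S_n)$ and $\widetilde{S}_n$ agree for both $M$ and $\lambda M$ over the Cohen-Macaulay ring $R$ (as noted after Definition \ref{S}). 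I would run the implications as the cycle (i)$\Rightarrow$(ii)$\Rightarrow$(iii)$\Rightarrow$(i).

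For (i)$\Rightarrow$(ii): a maximal Cohen-Macaulay module over a Cohen-Macaulay local ring satisfies $\widetilde{S}_n$ for every $n$, since its $\gc$-dimension, being $0$, remains $0$ under localization, whence $\depth_{R_\fp}(M_\fp)=\depth R_\fp$ for all $\fp$ by Theorem \ref{G3}(ii). In particular $M$ satisfies $\widetilde{S}_1$, and Theorem \ref{th1}(i) with $n=1$ then forces $M$ to be horizontally linked. To obtain that $\lambda M$ is maximal Cohen-Macaulay I would feed $\rgr(M,C)=+\infty\geq d$ into Theorem \ref{th1}(ii): it gives that $\lambda M$ satisfies $\widetilde{S}_d$, and evaluating at $\fm$ yields $\depth_R(\lambda M)\geq d$; as $\lambda M$ is a nonzero module over a $d$-dimensional ring, this is an equality.

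For (ii)$\Rightarrow$(iii): if $\lambda M$ is maximal Cohen-Macaulay it satisfies $(S_n)$ for all $n$, so any integer $n>d-\depth_R(M)=\gkd_R(M)\geq 0$ does the job. For (iii)$\Rightarrow$(i): since $M$, hence $\lambda M$, is horizontally linked, the hypothesis ``$\lambda M$ satisfies $(S_n)$'' is the same as ``$\lambda M$ satisfies $\widetilde{S}_n$'', and Theorem \ref{th1}(ii) yields $\rgr(M,C)\geq n>d-\depth_R(M)=\gkd_R(M)$. Were $\gkd_R(M)$ positive, then $M$ would have finite positive $\gc$-dimension and the inequality $\rgr(M,C)\leq\gkd_R(M)$ for such modules (a consequence of Theorem \ref{G3}) would contradict $\rgr(M,C)>\gkd_R(M)$; hence $\gkd_R(M)=0$ and $M$ is maximal Cohen-Macaulay.

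The part needing the most care is bookkeeping rather than computation: one must keep straight which module is being tested for $\widetilde{S}_n$ in each invocation of Theorem \ref{th1}, distinguish reduced grade with respect to $C$ from reduced grade with respect to $R$, and invoke the horizontal-linkage transfer to pass between $(S_n)$ and $\widetilde{S}_n$. Once the identification $M\text{ maximal Cohen-Macaulay}\Leftrightarrow\gkd_R(M)=0$ is in hand, each implication becomes a one-line consequence of Theorem \ref{th1}.
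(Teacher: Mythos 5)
Your proof is correct, and two of its three implications coincide with the paper's own argument: (ii)$\Rightarrow$(iii) is the same triviality, and (iii)$\Rightarrow$(i) is exactly the paper's contradiction argument via Theorem \ref{th1}(ii) and the inequality $n\leq\rgr(M,C)\leq\gkd_R(M)=d-\depth_R(M)$ from Theorem \ref{G3} (you are in fact more careful than the paper here, since you justify the passage from $(S_n)$ to $\widetilde{S}_n$ by observing that $\lambda M$ is itself horizontally linked). Where you genuinely diverge is (i)$\Rightarrow$(ii). The paper argues through the transpose machinery: from $\gkd_R(M)=0$ it deduces via Remark \ref{rem1} and Theorem \ref{G3} that $\trk M\cong\Tr M\otimes_RC$ is maximal Cohen-Macaulay, transfers depth to $\Tr M$ by Lemma \ref{lem2} (using $\Tr M\in\mathcal{A}_C$), concludes that the syzygy $\lambda M$ is maximal Cohen-Macaulay, and gets horizontal linkage from $\Ext^1_R(\Tr M,R)\cong\Ext^1_R(\trk M,C)=0$ together with Theorem \ref{MS}. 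You instead use Theorem \ref{th1} as a black box in both directions: maximal Cohen-Macaulayness localizes to give $\widetilde{S}_n$ for all $n$, so Theorem \ref{th1}(i) with $n=1$ yields horizontal linkage, and then $\rgr(M,C)=+\infty$ fed into Theorem \ref{th1}(ii) yields $\widetilde{S}_d$ for $\lambda M$, hence $\depth_R(\lambda M)=d$. Your route is shorter and more uniform, showing that the corollary really is a formal consequence of Theorem \ref{th1}; the paper's route re-exposes the underlying isomorphism $\Ext^i_R(\trk M,C)\cong\Ext^i_R(\Tr M,R)$ and the Auslander-class depth lemma, but since the proof of Theorem \ref{th1} rests on that same machinery, nothing is lost either way.
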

\begin{proof}
(i)$\Rightarrow$(ii). It follows from Theorem \ref{G3} and Remark
\ref{rem1} that $\trk M$ is maximal Cohen-Macaulay. As mentioned in
Remark \ref{remark3}(i), $\Tr M\otimes_RC\cong\trk M$. By Example
\ref{example1}(i), $\Tr M\in\mathcal{A}_C$. Now it follows from
Lemma \ref{lem2} that $\Tr M$ is maximal Cohen-Macaulay. Therefore
$\lambda M$ is maximal Cohen-Macaulay. As we have seen in the proof
of Theorem \ref{th1}, $\Ext^i_R(\trk M,C)\cong\Ext^i_R(\Tr M,R)$ for
all $i>0$. As $\gkd_R(M)=0$, $\Ext^i_R(\trk M,C)=0$ for all $i>0$.
Hence $\Ext^1_R(\Tr M,R)=0$ and so $M$ is horizontally linked by
Theorem \ref{MS}.

(ii)$\Rightarrow$(iii). Assume that $M$ is horizontally linked and
that $\lambda M$ is maximal Cohen-Macaulay so that it satisfies
($S_n$) for all $n$ and we have nothing to prove.

(iii)$\Rightarrow$(i). By Theorem \ref{th1}, $\rgr(M,C)\geq n$. Now
assume contrarily that $M$ is not maximal Cohen-Macaulay. Hence
$\gkd_R(M)>0$ and so we obtain the following inequality from Theorem
\ref{G3}:
$$n\leq\rgr(M,C)\leq\gkd_R(M)=d-\depth_R(M),$$
which is a contradiction.
\end{proof}
Let $\fa$ be a $\gc$-perfect ideal of $R$, i.e. $R/\fa$ is
$\gc$-perfect as $R$--module. By Theorem \ref{G2},
$\Ext^{\gr(\fa)}_R(R/\fa,C)$ is a semidualizing $R/\fa$--module. As
an immediate consequence of Corollary \ref{cor5}, we have the
following result:
\begin{cor}\label{cor6}
Let $R$ be a Cohen-Macaulay local ring, $M$ an $R$--module of finite
$\gc$-dimension linked by a $\gc$-perfect ideal $\fa$. Assume that
$\lambda_{R/\fa}M\in\mathcal{A}_K$, where
$K:=\Ext^{\tiny{\gr(\fa)}}_R(R/\fa,C)$. Then $M$ is Cohen-Macaulay
if and only if $\lambda_{R/\fa}M$ is so.
\end{cor}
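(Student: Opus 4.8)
The plan is to transport the whole problem to the Cohen--Macaulay local ring $R/\fa$ equipped with the semidualizing module $K$, where Corollary \ref{cor5} applies directly. First I would unwind the hypotheses. By definition of linkage by the ideal $\fa$, one has $\fa\subseteq\ann_R(M)$ and $M$ is horizontally linked as an $R/\fa$-module; in particular $M$ is a \emph{stable} $R/\fa$-module and $\lambda_{R/\fa}M$ is its link. Since $\fa$ is $\gc$-perfect, Theorem \ref{G2}(i) tells us that $K=\Ext^{\gr(\fa)}_R(R/\fa,C)$ is a semidualizing $R/\fa$-module, and Theorem \ref{G2}(ii), combined with $\gkd_R(M)<\infty$, gives $\gkkd_{R/\fa}(M)<\infty$. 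I would also record that $R/\fa$ is Cohen--Macaulay: over the Cohen--Macaulay ring $R$ we have $\gr_R(R/\fa)=\height\fa$ and $\gkd_R(R/\fa)=\depth R-\depth_R(R/\fa)$ by Theorem \ref{G3}(ii), so $\gc$-perfectness of $\fa$ forces $\depth_R(R/\fa)=\dim R-\height\fa=\dim(R/\fa)$.

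With these reductions in hand I would apply Corollary \ref{cor5} over $R/\fa$, with $C$ replaced by $K$ and $M$ regarded as a stable $R/\fa$-module of finite $\g_K$-dimension, using the hypothesis $\lambda_{R/\fa}M\in\mathcal{A}_K$. Since $M$ is already horizontally linked over $R/\fa$, the equivalence $\mathrm{(i)}\Leftrightarrow\mathrm{(ii)}$ of that corollary collapses to the statement that $M$ is maximal Cohen--Macaulay over $R/\fa$ if and only if $\lambda_{R/\fa}M$ is.

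The last step, and the one that needs a little care, is the passage between ``maximal Cohen--Macaulay over $R/\fa$'' (the output of Corollary \ref{cor5}) and the ``Cohen--Macaulay'' conclusion we want, which is phrased over $R$. I would first note that the depth and dimension of a module annihilated by $\fa$ are the same whether computed over $R$ or over $R/\fa$, so Cohen--Macaulayness over $R$ and over $R/\fa$ coincide for both $M$ and $\lambda_{R/\fa}M$. Assuming $M\neq0$ (the case $M=0$ being trivial), Theorem \ref{MS} shows that $M$, being horizontally linked, is a first syzygy over $R/\fa$; embedding it in a free $R/\fa$-module forces $\Ass_{R/\fa}(M)\subseteq\Ass(R/\fa)$, and since $R/\fa$ is Cohen--Macaulay each such associated prime is minimal of maximal coheight, whence $\dim_{R/\fa}(M)=\dim(R/\fa)$. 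The same applies to $\lambda_{R/\fa}M$, which is again a nonzero first syzygy because $\lambda^2_{R/\fa}M\cong M$. Thus for each of $M$ and $\lambda_{R/\fa}M$ the notions ``Cohen--Macaulay'' and ``maximal Cohen--Macaulay'' agree over $R/\fa$, and chaining this with the equivalence from Corollary \ref{cor5} gives the claim. The only genuine obstacle is this maximal-Cohen--Macaulay-versus-Cohen--Macaulay bookkeeping; everything else is a faithful translation of the data into the $(R/\fa,K)$ setting.
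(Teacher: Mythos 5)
Your proposal is correct and follows essentially the same route as the paper's proof: pass to the Cohen--Macaulay quotient $R/\fa$ with the semidualizing module $K$, use Theorem \ref{G2} to get $\gkkd_{R/\fa}(M)<\infty$, identify Cohen--Macaulayness over $R$ with maximal Cohen--Macaulayness over $R/\fa$ via the first-syzygy property from Theorem \ref{MS}, and conclude by Corollary \ref{cor5}. You merely make explicit two points the paper leaves implicit (why $R/\fa$ is Cohen--Macaulay, and the same mCM-versus-CM bookkeeping for $\lambda_{R/\fa}M$), which is a welcome but not essential refinement.
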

\begin{proof}
First note that $R/\fa$ is a Cohen-Macaulay local ring. As $M$ is
linked by the ideal $\fa$, it follows from Theorem \ref{MS} that $M$
is a first syzygy as an $R/\fa$--module and so $\dim_{R/\fa}(M)=\dim
R/\fa=\dim_R M$. Therefore $M$ is Cohen-Macaulay if and only if it
is maximal Cohen-Macaulay $R/\fa$--module. By Theorem \ref{G2},
$\gkkd_{R/\fa}(M)<\infty$. Now the assertion is clear by Corollary
\ref{cor5}.
\end{proof}
In the following result, we characterize when the local cohomology
group $\hh^{\cc(M)}_\fm(M)$ is of finite length in terms a numerical
equality and some inequalities, in certain cases.
\begin{thm}\label{cor3}
Let $R$ be a Cohen-Macaulay local ring of dimension $d$ with
canonical module $\omega_R$ and let $M$ be a horizontally linked non
Cohen-Macaulay $R$--module. If $\gd_R(\lambda M)<\infty$, then the
following are equivalent:
\begin{enumerate}[(i)]
\item $\hh^{\cc(M)}_\fm(M)$ is finitely generated;
\item{ $\depth_R(\lambda M)+\cc(M)=d$ and
$\depth_{R_\fp}((\lambda M)_\fp)+\cc(M)>d$ for all
$\fp\in\cm(M)\setminus\{\fm\}$.}
\end{enumerate}
\end{thm}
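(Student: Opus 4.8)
The plan is to reduce finite generation to a finite-length statement, pass through Local Duality so that everything is governed by a single \Ext\ module, and then read off its associated primes from Lemma \ref{lem4}.

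First I would set $C=\omega_R$ once and for all (it is semidualizing, and since it is canonical, \emph{every} module has finite $\gc$-dimension, with $\gkd_R(M)=d-\depth_R(M)$ by Theorem \ref{G3}). As $M$ is horizontally linked it is a syzygy (Theorem \ref{MS}), so $\Ass_R(M)\subseteq\Ass_R(R)=\Min R$ and $\dim_R(M)=d$; being non--Cohen--Macaulay it has $\gkd_R(M)>0$, so $n:=\rgr_R(M,\omega_R)$ is a finite positive integer and $\Ext^n_R(M,\omega_R)\neq0$. By Local Duality (Theorem \ref{th8}), $\hh^i_\fm(M)$ is the Matlis dual of $\Ext^{d-i}_R(M,\omega_R)$, whence $\cc(M)=d-n$ and
$$\hh^{\cc(M)}_\fm(M)\cong\Hom_R(\Ext^n_R(M,\omega_R),\E_R(k)).$$
The Matlis dual of a finitely generated module is finitely generated iff it has finite length iff the original module does; so $\hh^{\cc(M)}_\fm(M)$ is finitely generated iff $\Supp_R(\Ext^n_R(M,\omega_R))\subseteq\{\fm\}$. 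Since the minimal primes of a support are associated and $\Ext^n_R(M,\omega_R)\neq0$, this is in turn equivalent to $\Ass_R(\Ext^n_R(M,\omega_R))=\{\fm\}$.

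Next I would identify this associated-prime set. As $\gd_R(\lambda M)<\infty$, Example \ref{example1}(ii) gives $\lambda M\in\mathcal{A}_{\omega_R}$, so Lemma \ref{lem4} applies with $C=\omega_R$:
$$\Ass_R(\Ext^n_R(M,\omega_R))=\{\fp\mid \gcpd(M_\fp)\neq0,\ \depth_{R_\fp}((\lambda M)_\fp)=n=\rgr_{R_\fp}(M_\fp,(\omega_R)_\fp)\}.$$
Every such $\fp$ lies in $\cm(M)$, and specializing to $\fp=\fm$ shows that $\fm\in\Ass_R(\Ext^n_R(M,\omega_R))$ iff $\depth_R(\lambda M)=n=d-\cc(M)$, i.e. iff the first equality in (ii) holds. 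The implication (ii)$\Rightarrow$(i) is then immediate: the first equality puts $\fm$ into $\Ass_R(\Ext^n_R(M,\omega_R))$, while any $\fp\neq\fm$ in that set would lie in $\cm(M)$ with $\depth_{R_\fp}((\lambda M)_\fp)=d-\cc(M)$, contradicting the second condition of (ii); hence $\Ass_R(\Ext^n_R(M,\omega_R))=\{\fm\}$ and $\hh^{\cc(M)}_\fm(M)$ is finitely generated.

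For (i)$\Rightarrow$(ii) the first equality again follows from $\fm\in\Ass_R(\Ext^n_R(M,\omega_R))=\{\fm\}$. The remaining inequality is the crux. Fix $\fp\in\cm(M)\setminus\{\fm\}$; then $\Ext^n_R(M,\omega_R)_\fp=0$, i.e. $\rgr_{R_\fp}(M_\fp,(\omega_R)_\fp)>n$, and $\depth R_\fp>\gcpd(M_\fp)\geq n$ since $M_\fp$ is not Cohen--Macaulay. I must promote $\rgr_{R_\fp}>n$ to $\depth_{R_\fp}((\lambda M)_\fp)>n$, and this I would do by localizing at $\fp$ the exact sequences $(\ref{lem4}.1)$--$(\ref{lem4}.4)$ from the proof of Lemma \ref{lem4} together with the syzygy sequence $(\ref{d1}.2)$. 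Because $\Ext^n_R(M,\omega_R)_\fp=0$, sequence $(\ref{lem4}.3)$ gives $(\trc_n M)_\fp\cong L_\fp$, and the depth lemma applied to $(\ref{lem4}.4)$ yields $\depth_{R_\fp}(L_\fp)\geq1$; running this up the $(n-1)$-step $\omega_R$-syzygy sequence $(\ref{lem4}.2)$, whose middle terms are free $\omega_R$-modules of depth $\depth R_\fp>n$, gives $\depth_{R_\fp}((\trk M)_\fp)\geq n$, hence $\depth_{R_\fp}((\Tr M)_\fp)\geq n$ by $(\ref{lem4}.1)$ and Remark \ref{remark3}(i), and finally $(\ref{d1}.2)$ forces $\depth_{R_\fp}((\lambda M)_\fp)\geq n+1$. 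Thus $\depth_{R_\fp}((\lambda M)_\fp)+\cc(M)>d$, as required.

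The genuinely delicate part is precisely this last depth chase; its companion lower bound $\depth_{R_\fp}((\lambda M)_\fp)\geq n$ for $\fp\in\cm(M)$ is nothing but the assertion that $\lambda M$ satisfies $\widetilde{S}_n$, which also drops out of Theorem \ref{th1}(ii) since $\rgr_R(M,\omega_R)=n$. Everything else is bookkeeping around Local Duality and Lemma \ref{lem4}.
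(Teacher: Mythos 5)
Your proof is correct, and its skeleton coincides with the paper's: both reduce statement (i) to the assertion that a distinguished set of primes equals $\{\fm\}$, and both identify that set via Lemma \ref{lem4}. The paper packages the reduction differently: it quotes \cite[Corollary 7.2.12]{BS} (finite generation of $\hh^{\cc(M)}_\fm(M)$ is equivalent to $\Att_R(\hh^{\cc(M)}_\fm(M))=\{\fm\}$) together with Corollary \ref{cor2}, whose own proof is exactly your combination of local duality, Matlis duality and Lemma \ref{lem4} (with \cite[3.4]{Sh} playing the role of your Matlis-duality step); so your first two paragraphs re-derive Corollary \ref{cor2} rather than cite it. The genuine divergence is in (i)$\Rightarrow$(ii). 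Writing $n=\rgr(M,\omega_R)=d-\cc(M)$, the paper argues by contradiction: if $\depth_{R_\fp}((\lambda M)_\fp)+\cc(M)\le d$ for some $\fp\in\cm(M)\setminus\{\fm\}$, then Theorem \ref{th1}(ii) gives $\rgr(M,\omega_R)\le\rgr(M_\fp,\omega_{R_\fp})\le\depth_{R_\fp}((\lambda M)_\fp)$, forcing equality throughout, and Corollary \ref{cor2} then puts $\fp$ in $\Att_R(\hh^{\cc(M)}_\fm(M))=\{\fm\}$, a contradiction. You instead prove the strict inequality directly: from $\Ext^n_R(M,\omega_R)_\fp=0$ you chase depths through the localized sequences (\ref{lem4}.2)--(\ref{lem4}.4) and (\ref{d1}.2), and your chase is sound (the middle terms have depth $\depth R_\fp>n$, so each step raises the bound by one, ending with $\depth_{R_\fp}((\lambda M)_\fp)\ge n+1$). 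Note, however, that this chase re-proves in the case at hand the inequality $\depth_{R_\fp}((\lambda M)_\fp)\ge\rgr(M_\fp,\omega_{R_\fp})$ that Theorem \ref{th1}(ii) already supplies: once $\Ext^n_R(M,\omega_R)_\fp=0$ tells you $\rgr(M_\fp,\omega_{R_\fp})>n$, that theorem finishes in one line, which is in effect what the paper does. What your longer route buys is self-containedness: it stays entirely with localizations of global exact sequences, so it bypasses the attached-prime formalism of \cite{Sh} and \cite{BS} and also the implicit compatibility of horizontal linkage with localization that underlies applying Theorem \ref{th1}(ii) over $R_\fp$.
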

\begin{proof}
By \cite[Corollary 7.2.12]{BS}, $\hh^{\cc(M)}_\fm(M)$ is finitely
generated if and only if $\Att_R(\hh^{\cc(M)}_\fm(M))=\{\fm\}$. Now
the implication of $(ii)\Rightarrow(i)$ follows from Corollary
\ref{cor2}.\\
(i)$\Rightarrow$(ii). By Corollary \ref{cor2}, $\depth_R(\lambda
M)+\cc(M)=d$. Assume contrarily that
\begin{equation}\tag{\ref{cor3}.1}
\depth_{R_\fp}((\lambda M)_\fp)+\cc(M)\leq d  \text{ for some }
\fp\in\cm(M)\setminus\{\fm\}.
\end{equation}
Note that, by Local Duality Theorem \ref{th8},
$d-\cc(M)=\rgr(M,\omega_R)$. By Theorem \ref{th1}(ii), we obtain
\begin{equation}\tag{\ref{cor3}.2}
\rgr(M,\omega_R)\leq\rgr(M_\fp,\omega_{R_\fp})\leq\depth_{R_\fp}((\lambda
M)_\fp).
\end{equation}
It follows from (\ref{cor3}.1), (\ref{cor3}.2) that
$\rgr(M,\omega_R)=\rgr(M_\fp,\omega_{R_\fp})=\depth_{R_\fp}((\lambda
M)_\fp)$. Hence by Corollary \ref{cor2},
$\fp\in\Att_R(\hh^{\cc(M)}_\fm(M))=\{\fm\}$, which is a
contradiction.
\end{proof}
For a ring $R$, set $\X^i(R)=\{\fp\in\Spec R\mid\depth R_\fp\leq
i\}$. We have the following characterization of horizontally linked
module of $\gc$-dimension zero.
\begin{thm}\label{th2}
Let $M$ be a horizontally linked $R$--module of finite
$\gc$-dimension and $\lambda M\in\mathcal{A}_C$. Then $\gkd_R(M)=0$
if and only if $\depth_{R_\fp}(M_\fp)+\depth_{R_\fp}((\lambda
M)_\fp)>\depth R_\fp$ for all $\fp\in\Spec R\setminus\X^0(R)$.
\end{thm}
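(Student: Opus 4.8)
The plan is to prove both implications locally, the two main engines being the Auslander--Bridger depth formula (Theorem \ref{G3}(ii)) together with the localization of $\gc$-dimension, the linkage criterion of Theorem \ref{th1}(ii), and the associated-prime description of Lemma \ref{lem4}. Throughout I use that for $M_\fp\neq0$ of finite $\gc$-dimension one has $\gcpd(M_\fp)=\depth R_\fp-\depth_{R_\fp}(M_\fp)$, and that $\gcpd(M_\fp)\leq\gkd_R(M)$. Note also that since $M$ is horizontally linked it is stable and a first syzygy by Theorem \ref{MS}, so all hypotheses of Theorem \ref{th1}(ii) are in force.

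First I would treat the forward implication. Assume $\gkd_R(M)=0$. Localizing, $\gcpd(M_\fp)=0$ and hence $\depth_{R_\fp}(M_\fp)=\depth R_\fp$ for every $\fp$ with $M_\fp\neq0$ (the case $M_\fp=0$ being trivial, the depth being infinite). For the linked module, Theorem \ref{G3}(i) gives $\Ext^i_R(M,C)=0$ for all $i>0$, so $\rgr(M,C)=+\infty$; Theorem \ref{th1}(ii) then shows that $\lambda M$ satisfies $\widetilde{S}_n$ for every $n$, and taking $n$ large yields $\depth_{R_\fp}((\lambda M)_\fp)\geq\depth R_\fp$ for all $\fp$. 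Consequently, for any $\fp\notin\X^0(R)$, i.e. with $\depth R_\fp\geq1$, adding the two estimates gives $\depth_{R_\fp}(M_\fp)+\depth_{R_\fp}((\lambda M)_\fp)\geq2\depth R_\fp>\depth R_\fp$, which is the asserted inequality.

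For the converse I would argue by contradiction. Suppose the depth inequality holds on $\Spec R\setminus\X^0(R)$ but $\gkd_R(M)>0$. By Theorem \ref{G3}(i) some $\Ext^i_R(M,C)\neq0$ with $i>0$, so $k:=\rgr(M,C)$ is finite, $\Ext^k_R(M,C)\neq0$, and $\Ass_R(\Ext^k_R(M,C))\neq\emptyset$. Choosing $\fp$ in this set, Lemma \ref{lem4} yields $\gcpd(M_\fp)\neq0$, $\depth_{R_\fp}((\lambda M)_\fp)=k$, and $k=\rgr_{R_\fp}(M_\fp,C_\fp)$. Because $M$ is a first syzygy, the estimate (\ref{lem4}.5) in the proof of Lemma \ref{lem4} gives $k\leq\gcpd(M_\fp)<\depth R_\fp$, so in particular $\depth R_\fp\geq2$ and $\fp\notin\X^0(R)$. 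Applying the depth formula, $\depth_{R_\fp}(M_\fp)=\depth R_\fp-\gcpd(M_\fp)\leq\depth R_\fp-k$, whence $\depth_{R_\fp}(M_\fp)+\depth_{R_\fp}((\lambda M)_\fp)\leq\depth R_\fp$, contradicting the hypothesis at $\fp$. Thus $\gkd_R(M)=0$.

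The forward direction is the routine half, being essentially a passage to the limit in the conditions $\widetilde{S}_n$. The main obstacle is the converse, where the entire content is to manufacture a single prime at which the strict inequality fails; here Lemma \ref{lem4} is indispensable, since it is precisely the device that converts the non-vanishing of $\Ext^k_R(M,C)$ into the exact numerical identities $\depth_{R_\fp}((\lambda M)_\fp)=k=\rgr_{R_\fp}(M_\fp,C_\fp)$ at an associated prime. I would be careful to check that this prime lies outside $\X^0(R)$ — which follows from $M$ being a syzygy, forcing $\depth R_\fp>\gcpd(M_\fp)\geq k\geq1$ — since the hypothesis only constrains primes in $\Spec R\setminus\X^0(R)$.
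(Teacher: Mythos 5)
Your proof is correct. The substantive direction --- that the depth inequality forces $\gkd_R(M)=0$ --- is in essence the paper's own argument: the paper likewise assumes $\gkd_R(M)>0$, uses Lemma \ref{lem4} to produce a prime $\fp$ with $\gcpd(M_\fp)\neq0$ and $\rgr(M,C)=\depth_{R_\fp}((\lambda M)_\fp)=\rgr_{R_\fp}(M_\fp,C_\fp)$ (such a $\fp$ lies outside $\X^0(R)$ because $M$ is a syzygy), and contradicts the hypothesis via the depth formula of Theorem \ref{G3}(ii); your rearrangement of the inequalities is immaterial. Where you genuinely differ is the forward direction. The paper argues locally and by contradiction: from $\gcpd(M_\fp)=0$ it gets $\gcpd((\trk M)_\fp)=0$ by Remark \ref{rem1}, transfers this to $\Tr M$ using Remark \ref{remark3}(i), the membership $\Tr M\in\mathcal{A}_C$, and Lemma \ref{lem2}, deduces $\depth_{R_\fp}((\lambda M)_\fp)\geq\depth R_\fp$ because $\lambda M$ is a first syzygy of $\Tr M$, and finally uses that $M$ itself is a syzygy to exclude $\depth_{R_\fp}(M_\fp)=0$. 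You instead observe that $\gkd_R(M)=0$ gives $\rgr(M,C)=+\infty$ by Theorem \ref{G3}(i) and quote Theorem \ref{th1}(ii) for every $n$, so that $\lambda M$ satisfies $\widetilde{S}_n$ for all $n$ and hence $\depth_{R_\fp}((\lambda M)_\fp)\geq\depth R_\fp$ everywhere. Your route is shorter and cleanly reuses an already-proven theorem; it is not fundamentally new machinery, though, since the proof of Theorem \ref{th1}(ii) consists of exactly the ingredients (Remark \ref{remark3}(i), $\Tr M\in\mathcal{A}_C$, Lemma \ref{lem2}) that the paper deploys inline here. One minor trade-off: your appeal to Theorem \ref{th1}(ii) formally invokes the full horizontal-linkage hypothesis $M\cong\lambda^2M$, while the paper's inline argument for this direction needs only stability and the syzygy property of $M$ --- a distinction without consequence under the stated hypotheses.
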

\begin{proof}
Set $n=\gkd_R(M)$. Let
$\depth_{R_\fp}(M_\fp)+\depth_{R_\fp}((\lambda M)_\fp)>\depth R_\fp$
for all $\fp\in\Spec R\setminus\X^0(R)$. Assume contrarily that,
$n>0$. By Lemma \ref{lem4}, there exists $\fp\in\Spec R$ such that
$\gcpd(M_\fp)\neq0$ and $\rgr(M,C)=\depth_{R_\fp}((\lambda M)_\fp)$.
Therefore by Theorem \ref{G3}, we have
\[\begin{array}{rl}
\rgr(M,C)>\depth R_\fp-\depth_{R_\fp}(M_\fp)\\
=\gcpd(M_\fp)\geq\rgr(M_\fp,C_\fp),
\end{array}\]
which is a contradiction. Therefore $\gkd_R(M)=0$.

Conversely assume that, $\gkd_R(M)=0$. Assume contrarily that
$\depth_{R_\fp}(M_\fp)+\depth_{R_\fp}((\lambda M)_\fp)\leq\depth
R_\fp$ for some $\fp\in\Spec R\setminus X^0(R)$. It follows that
$\fp\in\Supp_R(M)$. Note that $\gcpd(M_\fp)=0$. Hence $\gcpd(\trcp
M_\fp)=0$ and so $\depth_{R_\fp}(\trcp M_\fp)=\depth R_\fp$. By
Example \ref{example1}(i), $\Tr M\in\mathcal{A}_C$. It follows from
Remark \ref{remark3}(i) and Lemma \ref{lem2} that
$\depth_{R_\fp}(\trcp M_\fp)=\depth_{R_\fp}(\Tr M_\fp)$. Therefore
$\depth_{R_\fp}(\lambda M_\fp)\geq\depth R_\fp$ Thus we obtain
$\depth_{R_\fp}(M_\fp)=0$. By Theorem \ref{MS}, $M$ is a first
syzygy and so $\depth R_\fp=0$ which is a contradiction.
\end{proof}
Recall that a module $M$ is said to be \emph{horizontally
self-linked} if $M\cong\lambda M$ (see \cite[Definition 7]{MS}). It
is immediate that the above characterization becomes simpler
whenever $M$ is horizontally self-linked.
\begin{cor}\label{cor}
Let $M$ be a horizontally self-linked $R$--module of finite
$\gc$-dimension and $M\in\mathcal{A}_C$. Then $\gkd_R(M)=0$ if and
only if $\depth_{R_\fp}(M_\fp)
>\frac{1}{2}(\depth R_\fp)$ for all $\fp\in\Spec R\setminus\X^0(R)$.
\end{cor}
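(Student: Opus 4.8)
The plan is to derive the statement as a direct specialization of Theorem \ref{th2}. First I would verify that the hypotheses of Theorem \ref{th2} are met. By the definition of horizontal self-linkage, $M\cong\lambda M$; applying $\lambda$ to this isomorphism gives $\lambda^2 M\cong\lambda M\cong M$, so $M\cong\lambda^2 M$ and hence $M$ is horizontally linked in the sense of Definition \ref{d1}. Furthermore, since $\lambda M\cong M$ and $M\in\mathcal{A}_C$ by hypothesis, we also have $\lambda M\in\mathcal{A}_C$. Thus $M$ is a horizontally linked module of finite $\gc$-dimension with $\lambda M\in\mathcal{A}_C$, and Theorem \ref{th2} is applicable.

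The remaining step is to simplify the depth inequality supplied by Theorem \ref{th2}. Because $M\cong\lambda M$, this isomorphism persists after localizing at any prime $\fp$, so $\depth_{R_\fp}((\lambda M)_\fp)=\depth_{R_\fp}(M_\fp)$ for all $\fp\in\Spec R$. Substituting into the criterion of Theorem \ref{th2}, the condition $\depth_{R_\fp}(M_\fp)+\depth_{R_\fp}((\lambda M)_\fp)>\depth R_\fp$ collapses to $2\,\depth_{R_\fp}(M_\fp)>\depth R_\fp$, that is, $\depth_{R_\fp}(M_\fp)>\frac{1}{2}(\depth R_\fp)$, for every $\fp\in\Spec R\setminus\X^0(R)$. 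Theorem \ref{th2} then asserts that this holds for all such $\fp$ precisely when $\gkd_R(M)=0$, which is exactly the claim. No genuine obstacle arises here; the only substantive observation is that self-linkage forces ordinary horizontal linkage and transfers the Auslander-class membership to $\lambda M$, both read off immediately from the isomorphism $M\cong\lambda M$.
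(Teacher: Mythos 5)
Your proposal is correct and matches the paper's intended argument exactly: the paper states Corollary \ref{cor} as an immediate consequence of Theorem \ref{th2}, and your verification that self-linkage gives $M\cong\lambda^2 M$ (hence horizontal linkage), transfers Auslander-class membership to $\lambda M$, and collapses the depth inequality to $2\,\depth_{R_\fp}(M_\fp)>\depth R_\fp$ is precisely the routine specialization the paper leaves to the reader.
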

Let $R$ be a local ring. For an $R$--module $M$, we recall from
\cite{EGS} that $\syz(M)$ denotes the largest integer $n$ for which
$M$ is the $n$th syzygy in a minimal free resolution of an
$R$--module $N$. In general, for a horizontally linked $R$--module
$M$ of finite and positive $\gc$-dimension, one has
\begin{equation}\tag{\ref{cor}.1}\label{equ}
\rgr(\lambda M)\leq\syz(M)\leq\depth_R(M),
\end{equation}
by \cite[Proposition 11]{M1} and Theorem \ref{MS}. In the following,
we study under which conditions the equality holds in the above
inequality.

For an $R$--module $M$, set
$$\ng(M)=\{\fp\in\Spec(R)\mid\gcpd(M_{\fp})\neq0\}.$$  The following is a
generalization of \cite[Theorem 2.7]{DS}.
\begin{thm}\label{th3}
Let $R$ be a local ring, $M$ an $R$--module with
$0<\gkd_R(M)<\infty$ and $\lambda M\in\mathcal{A}_C$. If $M$ is
horizontally linked then the following conditions are equivalent.
\begin{enumerate}[(i)]
          \item {$\depth_R(M)=\syz(M)=\rgr(\lambda M)$;}
          \item {$\fm\in\Ass_R(\Ext^{\tiny{\rgr(\lambda M)}}_R(\lambda M,R))$;}
          \item {$\depth_R(M)\leq\depth_{R_{\fp}}(M_{\fp})$, for each $\fp\in\ng(M)$}.
\end{enumerate}
\end{thm}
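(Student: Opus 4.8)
The plan is to collapse statement (i) to the single equality $\depth_R(M)=\rgr(\lambda M)$ and then characterize that equality twice over. Since $M$ is horizontally linked we have $M\cong\lambda^2M$, and applying $\lambda$ gives $\lambda M\cong\lambda^2(\lambda M)$, so $\lambda M$ is itself horizontally linked with $\lambda(\lambda M)\cong M$. By the inequalities (\ref{equ}), namely $\rgr(\lambda M)\leq\syz(M)\leq\depth_R(M)$, equality of the two ends forces the middle term, so (i) is equivalent to $\depth_R(M)=\rgr(\lambda M)$. I would then establish (i)$\Leftrightarrow$(iii) and (i)$\Leftrightarrow$(ii) independently.

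For (i)$\Leftrightarrow$(iii), set $t=\depth_R(M)$. By Theorem \ref{th1}(i) (applicable since $M$ is stable of finite $\gc$-dimension with $\lambda M\in\mathcal{A}_C$) together with (\ref{equ}), condition (i) is equivalent to the statement that $M$ satisfies $\widetilde{S}_t$. I would unwind the definition of $\widetilde{S}_t$: for $\fp\notin\ng(M)$ one has $\gcpd(M_\fp)=0$, hence $\depth_{R_\fp}(M_\fp)=\depth R_\fp$ by Theorem \ref{G3}(ii), so the defining inequality holds automatically at such primes. Thus $M$ satisfies $\widetilde{S}_t$ exactly when $\depth_{R_\fp}(M_\fp)\geq\min\{t,\depth R_\fp\}$ for all $\fp\in\ng(M)$. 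A short case analysis, comparing $\depth R_\fp$ with $t$ and using that $\depth_{R_\fp}(M_\fp)<\depth R_\fp$ whenever $\fp\in\ng(M)$, shows this is equivalent to $\depth_{R_\fp}(M_\fp)\geq t$ for all $\fp\in\ng(M)$, which is precisely (iii).

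For (i)$\Leftrightarrow$(ii), I would apply Lemma \ref{lem4} to the horizontally linked module $\lambda M$ with the semidualizing module taken to be $R$ itself, using $\lambda(\lambda M)\cong M$. The hypotheses to check are that $\lambda M$ has finite and positive Gorenstein dimension over $R$. Finiteness follows from $\lambda M\in\mathcal{A}_C$ together with $\gkd_R(\lambda M)<\infty$ (inherited from $\gkd_R(M)<\infty$ through the transpose and its syzygy $\lambda M$), since for a module in the Auslander class finiteness of $\gc$-dimension transfers to finiteness of ordinary Gorenstein dimension. Positivity follows from $\gkd_R(M)>0$: if $\gd_R(\lambda M)=0$ then $\Ext^i_R(\lambda M,R)=0$ for all $i>0$, so $\rgr(\lambda M)=\infty$, whence by Theorem \ref{th1}(i) the module $M$ satisfies $\widetilde{S}_n$ for every $n$, forcing $\depth_{R_\fp}(M_\fp)=\depth R_\fp$ and hence $\gcpd(M_\fp)=0$ for all $\fp$ by Theorem \ref{G3}(ii); taking $\fp=\fm$ gives $\gkd_R(M)=0$, a contradiction. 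Lemma \ref{lem4} then yields
\[
\Ass_R(\Ext^{\rgr(\lambda M)}_R(\lambda M,R))=\{\fp\mid \gd_{R_\fp}((\lambda M)_\fp)\neq 0,\ \depth_{R_\fp}(M_\fp)=\rgr(\lambda M)=\rgr_{R_\fp}((\lambda M)_\fp)\}.
\]
Testing membership of $\fm$, where the two localization conditions are automatic and $\gd_R(\lambda M)\neq0$ was just established, reduces (ii) to the single equality $\depth_R(M)=\rgr(\lambda M)$, that is, to (i).

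The step I expect to be the main obstacle is the verification that $\lambda M$ has finite Gorenstein dimension over $R$, which is exactly where the hypothesis $\lambda M\in\mathcal{A}_C$ does its work: in the special case treated in \cite[Theorem 2.7]{DS} the module has finite Gorenstein dimension outright, and $\lambda M$ then inherits it, whereas here one only has finiteness of $\gc$-dimension and must use the Auslander-class structure to pass to ordinary Gorenstein dimension so that Lemma \ref{lem4} may be invoked with $C$ replaced by $R$. Once this transfer and the positivity are in place, everything else is bookkeeping with Theorem \ref{th1}, Theorem \ref{G3}, the definition of $\widetilde{S}_t$, and the locus $\ng(M)$.
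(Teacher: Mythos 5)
Your reduction of (i) to the single equality $\depth_R(M)=\rgr(\lambda M)$ via (\ref{equ}) is correct, and your proof of (i)$\Leftrightarrow$(iii) --- Theorem \ref{th1}(i) plus the case analysis over $\ng(M)$ using Theorem \ref{G3}(ii) --- is sound, and in fact more direct than the paper's route, which passes through associated primes of $\Ext^t_R(\lambda M,R)$. The genuine gap is in (i)$\Leftrightarrow$(ii). You invoke Lemma \ref{lem4} for the module $\lambda M$ with the semidualizing module taken to be $R$, and this requires $0<\gd_R(\lambda M)<\infty$ (ordinary Gorenstein dimension, finite and positive). Your verification of finiteness rests on two unproven transfer claims: first, that $\gkd_R(\lambda M)<\infty$ is ``inherited'' from $\gkd_R(M)<\infty$ through the transpose --- but finiteness of $\gc$-dimension is not known to be preserved by the ordinary transpose $\Tr$ (the paper's Remark \ref{rem1} covers only the $C$-transpose $\trk$, and only in dimension zero, and the exact sequences relating $\Tr M$ to $M$ involve $M^*=\Hom_R(M,R)$, over which $\gkd_R(M)<\infty$ gives no control); and second, that membership in $\mathcal{A}_C$ upgrades finite $\gc$-dimension to finite $\g$-dimension --- a substantive assertion about semidualizing modules (over a complete Cohen-Macaulay base it amounts to the inclusion $\mathcal{A}_C\cap\{\gkd_R<\infty\}\subseteq\mathcal{A}_{\omega_R}$) which is neither stated in the paper nor among its cited results, and for which you offer no argument. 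Since the proof of Lemma \ref{lem4} genuinely uses finite G-dimension (it runs repeatedly through Theorem \ref{G3}), your derivation of (i)$\Leftrightarrow$(ii) does not go through as written.

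The paper's own proof shows why no such finiteness is needed and is the natural repair. Setting $t=\rgr(\lambda M)$ (finite and $<\depth R$ because $\gkd_R(M)>0$, via Theorem \ref{th1}(i)), it uses only the exact sequence of Remark \ref{remark3}(ii) applied to $\lambda M$,
\begin{equation*}
0\rightarrow\Ext^t_R(\lambda M,R)\rightarrow\mathcal{T}_t(\lambda M)\rightarrow\lambda^2\mathcal{T}_t(\lambda M)\rightarrow0,
\end{equation*}
together with the stable equivalences $M\cong\lambda^2 M\approx\Omega^t\mathcal{T}_t(\lambda M)$ obtained from the vanishing of $\Ext^i_R(\lambda M,R)$ for $0<i<t$. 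Then (ii) holds iff $\depth_R(\mathcal{T}_t(\lambda M))=0$ (the submodule $\Ext^t_R(\lambda M,R)$ has depth zero iff $\mathcal{T}_t(\lambda M)$ does, since $\lambda^2\mathcal{T}_t(\lambda M)$ is a syzygy and has positive depth), and this in turn holds iff $\depth_R(M)=t$, by depth counting along $\Omega^t$ using $t<\depth R$; with (\ref{equ}) this is exactly (i). These sequences exist for an arbitrary module, so no homological dimension of $\lambda M$ over $R$ ever enters. If you wish to keep your argument, you must first prove the two transfer claims; otherwise replace the appeal to Lemma \ref{lem4} by this direct computation.
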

\begin{proof}
We first note some general facts. Set $t=\rgr(\lambda M)$. As
$\gkd_R(M)>0$, by Theorem \ref{th1}(i), $t<\depth R$. By Remark
\ref{remark3}(ii), there exists the following exact sequence
\begin{equation}\tag{\ref{th3}.1}
0\rightarrow\Ext^t_R(\lambda M,R)\rightarrow\mathcal{T}_t(\lambda
M)\rightarrow\lambda^2\mathcal{T}_t(\lambda M)\rightarrow0,
\end{equation}
and also $\mathcal{T}_i(\lambda M)\cong\lambda^2\mathcal{T}_i\lambda
M\approx\Omega\mathcal{T}_{i+1}(\lambda M)$ for all $i$, $0<i<t$.
Therefore,
\begin{equation}\tag{\ref{th3}.2}
M\cong\lambda^2 M=\Omega\mathcal{T}_1(\lambda
M)\approx\Omega^2\mathcal{T}_2(\lambda
M)\approx\cdots\approx\Omega^t\mathcal{T}_t(\lambda M).
\end{equation}
$(i)\Rightarrow (ii)$ As $\depth_R(M)=t<\depth R$, it is easy to see
that $\depth_R(\mathcal{T}_t(\lambda M))=0$ by the (\ref{th3}.2).
From the exact sequence (\ref{th3}.1), as
$\depth_R(\lambda^2\mathcal{T}_t(\lambda M))>0$, we find that
$\depth_R(\Ext^t_R(\lambda M,R))=0$ and so
$\fm\in\Ass_R(\Ext^t_R(\lambda M,R))$.

$(ii)\Rightarrow (i)$ By the exact sequence (\ref{th3}.1), it is
obvious that $\depth_R(\mathcal{T}_t(\lambda M))=0$. As $t<\depth
R$, $\depth_R(M)=t$ by (\ref{th3}.2). Now the assertion is clear by
the inequality (\ref{equ}).

$(i)\Rightarrow(iii)$ By Theorem \ref{th1}(i), $M$ satisfies
$\widetilde{S}_t$. Hence if $\fp\in\ng(M)$ then
$\depth_{R_{\fp}}(M_{\fp})\geq t=\depth_R(M)$.

$(iii)\Rightarrow (i)$ Let $\fp\in\Ass_R(\Ext^{t}_R(\lambda M,R))$.
Therefore, $\fp
R_{\fp}\in\Ass_{R_{\fp}}(\Ext^t_{R_{\fp}}(\lambda_{R_{\fp}}M_{\fp},R_{\fp}))$
and $\rgr_{R_{\fp}}(\lambda_{R_{\fp}} M_{\fp})=t$. By the exact
sequence (\ref{th3}.1), we have
$\depth_{R_{\fp}}(\mathcal{T}_t(\lambda_{R_{\fp}}M_{\fp}))=0$. If
$\gcpd(M_{\fp})=0$, then $\gcpd((\trk M)_\fp)=0$ and so
$\Ext^i_R(\trk M,C)_\fp=0$ for all $i>0$. On the other hand, as seen
in the proof of Theorem \ref{th1}, we have $$\Ext^i_R(\Tr
M,R)\cong\Ext^i_R(\trk M,C) \text{ for all } i>0,$$ which leads to a
contradiction. Thus $\gcpd(M_{\fp})\neq0$. By Theorem \ref{th1}(i),
$M$ satisfies $\widetilde{S}_t$ which implies $t<\depth R_{\fp}$.
Now by localizing (\ref{th3}.2) at $\fp$, we conclude that
$\depth_{R_{\fp}}(M_{\fp})=t$. By our assumption, $\depth_R(M)\leq
t$. Now the assertion is clear by the inequality (\ref{equ}).
\end{proof}
We can express the reduced grade with respect to a semidualizing
module of a horizontally linked module in terms of the depth of
linked module as follows, which is a generalization of
\cite[Proposition 2.2]{DS}.
\begin{thm}\label{th6}
Let $M$ be a horizontally-linked $R$--module of finite
$\gc$-dimension and $\lambda M\in\mathcal{A}_C$. Then
$$\rgr(M,C)=\inf\{\depth_{R_\fp}\bigl((\lambda M)_\fp\bigr)\mid\fp\in\Spec R, \gcpd(M_\fp)\neq0\}.$$
Moreover, $\rgr(M)\leq\rgr(M,C)$ and the equality holds if
$\pd_R(\lambda M)<\infty$.
\end{thm}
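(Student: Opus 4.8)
The plan is to establish the displayed equality first and then deduce the ``Moreover'' clause, after disposing of the degenerate case $\gkd_R(M)=0$. If $\gkd_R(M)=0$ then $\Ext^i_R(M,C)=0$ for all $i>0$, so $\rgr(M,C)=+\infty$; moreover $\gc$-dimension zero localizes, so $\gcpd(M_\fp)=0$ for every $\fp$, the indexing set on the right is empty, and both sides are $+\infty$. So I may assume $n:=\rgr(M,C)$ satisfies $0<n<\infty$, noting $n\le\gkd_R(M)$ by Theorem \ref{G3}, and I write $t$ for the right-hand infimum.

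First I would prove $t\le n$. Since $\Ext^n_R(M,C)\neq0$ there is some $\fp\in\Ass_R(\Ext^n_R(M,C))$, and Lemma \ref{lem4} then tells us $\gcpd(M_\fp)\neq0$ and $\depth_{R_\fp}((\lambda M)_\fp)=n$; such a $\fp$ lies in the indexing set and witnesses $t\le n$. For the reverse bound $t\ge n$, fix any $\fp$ with $\gcpd(M_\fp)\neq0$. As $M$ is horizontally linked it is a first syzygy (Theorem \ref{MS}), so $M_\fp$ embeds in a free $R_\fp$-module; since $\gcpd(M_\fp)\neq0$ forces $\depth R_\fp\ge1$ (otherwise $\gcpd(M_\fp)=\depth R_\fp-\depth_{R_\fp}(M_\fp)\le0$), we get $\depth_{R_\fp}(M_\fp)\ge1$ and hence, by Theorem \ref{G3}(ii), $\gcpd(M_\fp)=\depth R_\fp-\depth_{R_\fp}(M_\fp)<\depth R_\fp$. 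Because $\Ext^i_R(M,C)=0$ for $0<i<n$ localizes, $\rgr_{R_\fp}(M_\fp,C_\fp)\ge n$, and together with $\rgr_{R_\fp}(M_\fp,C_\fp)\le\gcpd(M_\fp)$ this yields $\depth R_\fp>n$. Finally $\rgr(M,C)=n$ makes $\lambda M$ satisfy $\widetilde{S}_n$ by Theorem \ref{th1}(ii), so $\depth_{R_\fp}((\lambda M)_\fp)\ge\min\{n,\depth R_\fp\}=n$; taking the infimum gives $t\ge n$, proving the formula.

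For the ``Moreover'' clause I would transport Theorem \ref{th5} to the module $\Tr M$. From the sequence (\ref{d1}.2) the module $\lambda M$ is a first syzygy of $\Tr M$, so $\Tr M\in\mathcal{A}_C$ by Example \ref{example1}(i), and $\Tr\Tr M\approx M$ gives $\Ext^i_R(\Tr\Tr M,-)\cong\Ext^i_R(M,-)$ for $i>0$. Theorem \ref{th5}(a), applied to $\Tr M$, then reads $\Ext^i_R(M,R)=0\Rightarrow\Ext^i_R(M,C)=0$ for $1\le i\le n$; hence $\rgr(M)>n$ would force $\Ext^i_R(M,C)=0$ for $1\le i\le n$, contradicting $n=\rgr(M,C)$, and therefore $\rgr(M)\le\rgr(M,C)$. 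If in addition $\pd_R(\lambda M)<\infty$, the same short exact sequence gives $\pd_R(\Tr M)<\infty$, so $\gd_{R_\fp}((\Tr M)_\fp)<\infty$ for all $\fp$ and Theorem \ref{th5}(b) applies to $\Tr M$, supplying the reverse implication $\Ext^i_R(M,C)=0\Rightarrow\Ext^i_R(M,R)=0$ for $1\le i\le n$; combining the two implications yields $\rgr(M)=\rgr(M,C)$.

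The step I expect to be most delicate is the bound $t\ge n$: it requires controlling $\depth R_\fp$ at every prime $\fp$ with $\gcpd(M_\fp)\neq0$ through the chain $n\le\rgr_{R_\fp}(M_\fp,C_\fp)\le\gcpd(M_\fp)<\depth R_\fp$, and the strict inequality on the right rests entirely on the first-syzygy property of $M$ forcing $\depth_{R_\fp}(M_\fp)\ge1$; only then does $\widetilde{S}_n$ for $\lambda M$ convert into $\depth_{R_\fp}((\lambda M)_\fp)\ge n$. By comparison the ``Moreover'' part should reduce to careful bookkeeping, once one has checked that $\Tr M\in\mathcal{A}_C$, that Ext is transported correctly along $\Tr\Tr M\approx M$, and that $\pd_R(\Tr M)<\infty$ validates the hypothesis of Theorem \ref{th5}(b).
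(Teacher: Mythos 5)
Your proposal is correct and follows essentially the same route as the paper's own proof: Lemma \ref{lem4} applied to an associated prime of $\Ext^n_R(M,C)$ gives the bound $t\le n$, the chain $n\le\rgr_{R_\fp}(M_\fp,C_\fp)\le\gcpd(M_\fp)<\depth R_\fp$ (resting on the first-syzygy property of $M$) combined with Theorem \ref{th1}(ii) gives $t\ge n$, and the ``Moreover'' clause is obtained exactly as in the paper by passing to $\Tr M\in\mathcal{A}_C$ and invoking Theorem \ref{th5}. The only difference is expository: you spell out the degenerate case $\gkd_R(M)=0$ and the bookkeeping that the paper compresses into ``we may assume $\gkd_R(M)>0$'' and ``replace $M$ by $\Tr M$ in Theorem \ref{th5}.''
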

\begin{proof}
We may assume that $\gkd_R(M)>0$. Set $n=\rgr(M,C)$. By Theorem
\ref{th1}, $\lambda M$ satisfies $\widetilde{S}_{n}$. Hence,
\begin{equation}\tag{\ref{th6}.1}
\depth_{R_\fp}(\lambda M)_\fp\geq\min\{\depth R_\fp,n\} \text{ for
all } \fp\in\Spec R
\end{equation}
Now let $\fp\in\Spec R$ with $\gcpd(M_\fp)\neq0$ so that $\depth
R_\fp>0$. As $M$ is a syzygy, we get $\depth_{R_\fp}( M_\fp)>0$.
Therefore, we have
$n\leq\rgr_{R_\fp}(M_\fp,C_\fp)\leq\gcpd(M_\fp)<\depth R_\fp$. It
follows from (\ref{th6}.1) that $\depth_{R_\fp}((\lambda M)_\fp)\geq
n$, and so $n\leq\inf\{\depth_{R_\fp}((\lambda M)_\fp)\mid
\fp\in\Spec R,
 \gd_{R_\fp}(M_\fp)\neq0\}.$

On the other hand, by the Lemma \ref{lem4}, if
$\fp\in\Ass_R(\Ext^n_R(M,C))$ then $\gcpd(M_\fp)\neq0$ and
 $\depth_{R_\fp}((\lambda M)_\fp)=\rgr(M,C)$ and so the assertion holds.

As $\lambda M=\Omega\Tr M$, $\Tr M\in\mathcal{A}_C$ by Example
\ref{example1}(i). Note that $M\approx\Tr\Tr M$. Hence it is enough
to replace $M$ by $\Tr M$ in the Theorem \ref{th5} for the last
part.
 \end{proof}
For a subset $X$ of $\Spec R$, we say that $M$ is of $\gc$-dimension
zero on $X$, if $\gcpd(M_\fp)=0$ for all $\fp$ in $X$.
\begin{prop}
Let $M$ be a horizontally linked $R$--module of finite and positive
$\gc$-dimension and $\lambda M\in\mathcal{A}_C$. Set
$t_M=\rgr(M,C)+\rgr(\lambda M)$, then $M$ is of $\gc$-dimension zero
on $X^{t_M-1}(R).$
\end{prop}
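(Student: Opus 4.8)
The plan is to argue by contraposition: I will show that if $\fp\in\Spec R$ satisfies $\gcpd(M_\fp)\neq0$, then $\depth R_\fp\geq t_M$, so that every prime in $\X^{t_M-1}(R)$ must have $\gcpd(M_\fp)=0$. Before starting I record that $t_M$ is a finite positive integer. Since $0<\gkd_R(M)<\infty$, Theorem \ref{G3}(i) provides some $i>0$ with $\Ext^i_R(M,C)\neq0$, so $\rgr(M,C)\leq\gkd_R(M)<\infty$. Moreover, if $\rgr(\lambda M)$ were infinite, then by Theorem \ref{th1}(i) the module $M$ would satisfy $\widetilde{S}_n$ for every $n$, forcing $\depth_R(M)=\depth R$ and hence $\gkd_R(M)=0$ by Theorem \ref{G3}(ii), contradicting the hypothesis. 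Thus both reduced grades, and with them $t_M$, are finite, and Theorem \ref{th1}(i) is available with $n=\rgr(\lambda M)$.

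The key idea is to split $\depth R_\fp$ by the depth formula of Theorem \ref{G3}(ii), namely $\depth R_\fp=\depth_{R_\fp}(M_\fp)+\gcpd(M_\fp)$ (valid since $\gcpd(M_\fp)<\infty$), and to bound each summand below by one of the two reduced grades. For the first summand I apply Theorem \ref{th1}(i) with $n=\rgr(\lambda M)$: as $M$ is horizontally linked, it satisfies $\widetilde{S}_{\rgr(\lambda M)}$, so $\depth_{R_\fp}(M_\fp)\geq\min\{\rgr(\lambda M),\depth R_\fp\}$. Because $\gcpd(M_\fp)\neq0$, the depth formula forces $\depth_{R_\fp}(M_\fp)<\depth R_\fp$, which rules out $\depth R_\fp$ as the value of the minimum and yields $\depth_{R_\fp}(M_\fp)\geq\rgr(\lambda M)$.

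For the second summand I bound $\gcpd(M_\fp)$ below by $\rgr(M,C)$. Since the vanishing of $\Ext$ localizes, the equalities $\Ext^i_R(M,C)=0$ for $0<i<\rgr(M,C)$ give $\Ext^i_{R_\fp}(M_\fp,C_\fp)=0$ in the same range, whence $\rgr(M,C)\leq\rgr_{R_\fp}(M_\fp,C_\fp)$; and because $C_\fp$ is semidualizing over $R_\fp$ and $M_\fp$ has finite positive $\gc$-dimension, Theorem \ref{G3} gives $\rgr_{R_\fp}(M_\fp,C_\fp)\leq\gcpd(M_\fp)$. Adding the two lower bounds produces $\depth R_\fp=\depth_{R_\fp}(M_\fp)+\gcpd(M_\fp)\geq\rgr(\lambda M)+\rgr(M,C)=t_M$, completing the contrapositive. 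I expect the only delicate points to be the finiteness bookkeeping of the first paragraph and the strict inequality $\depth_{R_\fp}(M_\fp)<\depth R_\fp$ that collapses the $\widetilde{S}$-bound to $\rgr(\lambda M)$; notably, the argument is entirely global and never requires localizing the horizontal linkage itself.
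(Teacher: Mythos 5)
Your proposal is correct and follows essentially the same route as the paper's own proof: both rest on Theorem \ref{th1}(i) with $n=\rgr(\lambda M)$ to get $\depth_{R_\fp}(M_\fp)\geq\rgr(\lambda M)$ once $\gcpd(M_\fp)\neq 0$, the localized Auslander--Bridger equality of Theorem \ref{G3}(ii), and the localization bound $\rgr(M,C)\leq\rgr_{R_\fp}(M_\fp,C_\fp)\leq\gcpd(M_\fp)$, the only differences being that you argue by contraposition rather than contradiction and that you make explicit the finiteness of $t_M$, which the paper leaves implicit.
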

\begin{proof}
Set $n=\rgr(\lambda M)$. Assume contrarily that $\gcpd(M_\fp)\neq0$
for some $\fp\in X^{t_M-1}(R)$. Note that, by Theorem \ref{th1}(i),
we have
$$\depth_{R_\fp}(M_\fp)\geq \min\{\depth R_\fp,n\}$$
for all $\fp\in\Spec R$. If $\depth R_\fp\leq n$, then
$\gcpd(M_\fp)=\depth R_\fp-\depth_{R_\fp}(M_\fp)=0$, which is a
contradiction. If $\depth R_\fp>n$, then $\depth_{R_\fp}(M_\fp)\geq
n$. Therefore,
$$t_M-n\leq\rgr(M_\fp,C_\fp)\leq\gcpd(M_\fp)=\depth R_\fp-\depth_{R_\fp}(M_\fp)\leq t_M-n-1,$$
which is a contradiction.
\end{proof}
\section{reduced $\gc$-perfect modules}
Let $M$ be an $R$--module of finite positive $\gc$-dimension. The
following inequalities are well-known
$$\gr_R(M)\leq\rgr(M,C)\leq\gkd_R(M).$$
In the literature, $M$ is called $\gc$-\emph{perfect} if
$\gr_R(M)=\gkd_R(M)$. In this section we are interested in the case
where $\rgr(M,C)=\gkd_R(M)$ which we call $M$ reduced
$\gc$--perfect. So we bring the following definition.
\begin{dfn}\label{rgr}
\emph{Let $M$ be an $R$--module of finite $\gc$-dimension, we say
that $M$ is \emph{reduced $\gc$-perfect} if its $\gc$-dimension is
equal to its reduced grade with respect to $C$, i.e.
$\rgr(M,C)=\gkd_R(M)$.}
\end{dfn}
Note that every reduced G-perfect module has a finite and positive
$\gc$-dimension. It is obvious that $\Ext^{\tiny{\rgr_R(M,C)}}_R
(M,C)$ is the only non-zero module among all $\Ext^i_R(M,C)$ for
$i>0$.

Let $R$ be a Gorenstein local ring. Following \cite{H}, an
$R$--module $M$ is said to be an \emph{Eilenberg-Maclane} module, if
$\hh^i_\fm(M)=0$ for all $i\neq\depth_R(M), \dim_R(M)$. Thus reduced
$\gc$-perfect modules can be viewed as a generalization of
Eilenberg-MacLane modules. The following is a generalization of
\cite[Theorem 3.3]{DS}.
\begin{thm}\label{th4}
Let $R$ be a Cohen-Macaulay local ring of dimension $d$. If $M$ is
reduced $\gc$-perfect of $\gc$-dimension $n$ and $\lambda
M\in\mathcal{A}_C$, then
$$\depth_R(M)+\depth_R(\lambda M)=\depth
R+\depth_R(\Ext^n_R(M,C)).$$
\end{thm}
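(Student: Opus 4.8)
The plan is to reduce the two-term identity to a single depth computation and then chase depth through the $C$-dual of a minimal projective resolution. Write $d=\dim R=\depth R$ (recall $R$ is Cohen-Macaulay). Since $M$ is reduced $\gc$-perfect we have $n=\gkd_R(M)<\infty$, so Theorem \ref{G3}(ii) gives $\depth_R(M)=d-n$; putting $E=\Ext^n_R(M,C)$ and $e=\depth_R(E)$, the assertion becomes equivalent to the single equality $\depth_R(\lambda M)=n+e$. Everything below computes $\depth_R(\lambda M)$.

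First I would record the relevant exact sequences. Reduced $\gc$-perfectness means $\Ext^i_R(M,C)=0$ for $0<i<n$ and for $i>n$, so dualizing a minimal projective resolution of $M$ by $(-)^{\triangledown}=\Hom_R(-,C)$ produces exactly the sequences used in the proof of Lemma \ref{lem4}: the sequence (\ref{lem4}.2) linking $\trk M$ to $\trc_n M$ through $C$-free terms, together with (\ref{lem4}.3) $0\to E\to\trc_n M\to L\to0$ and (\ref{lem4}.4) $0\to L\to\oplus^m C\to\trc_{n+1}M\to0$. The $n$-th syzygy of $M$ has $\gc$-dimension $0$, hence $\trc_{n+1}M$ has $\gc$-dimension $0$ by Remark \ref{rem1} and thus $\depth_R(\trc_{n+1}M)=d$ by Theorem \ref{G3}(ii); feeding this into (\ref{lem4}.4) forces $\depth_R(L)=d$, and the depth lemma applied to (\ref{lem4}.3) then gives $\depth_R(\trc_n M)=e$.

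Next I would transfer this value up to $\lambda M$. The sequence (\ref{lem4}.2) exhibits $\trk M$ as an $(n-1)$-fold $C$-syzygy of $\trc_n M$ through modules of depth $d$; over the Cohen-Macaulay ring $R$, where every finite module has depth at most $d$, the depth lemma yields exact values rather than mere inequalities, so iterating gives $\depth_R(\trk M)=\min\{d,\,e+n-1\}$. Because $\lambda M$ is the first syzygy of $\Tr M$ in (\ref{d1}.2), Example \ref{example1}(i) shows $\Tr M\in\mathcal{A}_C$; hence by Remark \ref{remark3}(i) and Lemma \ref{lem2} we have $\depth_R(\Tr M)=\depth_R(\Tr M\otimes_R C)=\depth_R(\trk M)$, and one further application of the depth lemma to $0\to\lambda M\to P_1^{*}\to\Tr M\to0$ produces $\depth_R(\lambda M)=\min\{d,\,e+n\}$.

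The remaining step, which I expect to be the crux, is to show that this minimum is attained by $e+n$, i.e. that $e\le d-n$; without it one recovers only $\depth_R(M)+\depth_R(\lambda M)=\min\{2d-n,\,d+e\}$. I would argue this through the support of $E$: for any $\fp\in\Supp_R(E)$ one has $\Ext^n_{R_\fp}(M_\fp,C_\fp)\neq0$, so $\gcpd(M_\fp)\ge n$ by Theorem \ref{G3}(i), whence $\depth R_\fp=\dim R_\fp\ge n$ by Cohen-Macaulayness and Theorem \ref{G3}(ii), and therefore $\dim R/\fp\le d-n$. Consequently $e\le\dim_R(E)\le d-n$, the minimum equals $e+n$, and combining with $\depth_R(M)=d-n$ gives $\depth_R(M)+\depth_R(\lambda M)=(d-n)+(e+n)=d+\depth_R(\Ext^n_R(M,C))$, as required.
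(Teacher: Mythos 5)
Your proposal is correct and is essentially the paper's own proof: the same three exact sequences obtained by $C$-dualizing a minimal projective resolution, the same depth-lemma chase giving $\depth_R(\trc_{n}M)=\depth_R(\Ext^n_R(M,C))$ and then $\depth_R(\lambda M)=\depth_R(\trk M)+1$ via $\Tr M\in\mathcal{A}_C$, Remark \ref{remark3}(i) and Lemma \ref{lem2}, all combined at the end with Theorem \ref{G3}(ii). The only divergence is how the crucial bound $\depth_R(\Ext^n_R(M,C))\le d-n$ (which turns your $\min\{d,\,e+n\}$ into $e+n$) is secured: the paper cites the grade inequality $\gr_R(\Ext^n_R(M,C))\ge n$ from \cite[Corollary 4.17]{AB}, whereas you prove exactly that bound directly by localizing at primes of $\Supp_R(\Ext^n_R(M,C))$ and applying Theorem \ref{G3} together with Cohen--Macaulayness --- a self-contained substitute for the citation rather than a genuinely different method.
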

\begin{proof}
Let $\cdots\rightarrow P_n\rightarrow\cdots\rightarrow
P_0\rightarrow M\rightarrow0$ be the minimal projective resolution
of $M$. As $\Ext^i_R(M,C)=0$ for $0<i<n$, we obtain the following
exact sequences (as mentioned in (4.72), (4.7.3) and (4,7,4)):
\begin{equation}\tag{\ref{th4}.1}
0\rightarrow\trk M\rightarrow
(P_2)^{\triangledown}\rightarrow\cdots\rightarrow
(P_{n})^\triangledown\rightarrow\trc_{n}M\rightarrow0,
\end{equation}
\begin{equation}\tag{\ref{th4}.2}
0\rightarrow\Ext^n_R(M,C)\rightarrow\trc_{n}M\rightarrow
L\rightarrow0
\end{equation}
\begin{equation}\tag{\ref{th4}.3}
0\rightarrow L\rightarrow \overset{m}{\oplus}
C\rightarrow\trc_{n+1}M \rightarrow0.
\end{equation}
As $\gkd_R(\Omega^nM)=0$, $\gkd_R(\trc_{n+1}M)=0$ by Remark
\ref{rem1}, hence $\gkd_R(L)=0$. In other words, $\depth_R(L)=d$. It
follows from the exact sequence (\ref{th4}.2) that
\begin{equation}\tag{\ref{th4}.4}
\depth_R(\Ext^n_R(M,C))=\depth_R(\trc_{n}M).
\end{equation}
 By \cite[Corollary
4.17]{AB}, $\gr_R(\Ext^n_R(M,C))\geq n$. Note that
$$\dim_R(\Ext^n_R(M,C))=d-\gr_R(\Ext^n_R(M,C))$$ over the
Cohen-Macaulay local ring $R$. Therefore we have
\[\begin{array}{rl}
\depth_R(\trc_{n}M)&=\depth_R(\Ext^n_R(M,C))\\
&\leq\dim_R(\Ext^n_R(M,C))\\
&\leq d-n.
\end{array}\]
Now by the exact sequence (\ref{th4}.1), it is easy to see that
\begin{equation}\tag{\ref{th4}.5}
\depth(\trk M)=\depth_R(\trc_{n}M)+n-1.
\end{equation}
Note that $\trk M\cong\Tr M\otimes_RC$ by Remark \ref{remark3}(i).
As $\lambda M$ is the first syzygy of $\Tr M$, it follows from
Example \ref{example1}(i) that $\Tr M\in\mathcal{A}_C$. By Lemma
\ref{lem2}, $\depth_R(\Tr M)=\depth(\trk M)$ and so
\begin{equation}\tag{\ref{th4}.6}
\depth_R(\lambda M)=\depth(\trk M)+1.
\end{equation}
Now the assertion is clear by (\ref{th4}.4),(\ref{th4}.5),
(\ref{th4}.6) and Theorem \ref{G3}.
\end{proof}
\begin{thm}\label{th7}
Assume that $M$ is a horizontally linked $R$--module with finite and
positive $\gkd_R(M)=n$ and that $\lambda M\in\mathcal{A}_C$. Then
$M$ is reduced $\gc$-perfect if and only if $\lambda M$ satisfies
$\widetilde{S}_n$.
\end{thm}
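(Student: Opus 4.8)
The plan is to observe that the assertion is essentially a repackaging of Theorem \ref{th1}(ii) together with the definition of reduced $\gc$-perfectness. First I would recall that, by Definition \ref{rgr}, the module $M$ is reduced $\gc$-perfect precisely when $\rgr(M,C)=\gkd_R(M)$. Since we are given $\gkd_R(M)=n$ and since the general inequality $\rgr(M,C)\leq\gkd_R(M)$ always holds for a module of finite and positive $\gc$-dimension (by Theorem \ref{G3}), the equality $\rgr(M,C)=n$ is equivalent to the single inequality $\rgr(M,C)\geq n$. Thus the whole theorem reduces to showing that $\rgr(M,C)\geq n$ if and only if $\lambda M$ satisfies $\widetilde{S}_n$.

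Next I would simply invoke Theorem \ref{th1}(ii). Its hypotheses are that $M$ is a stable module of finite $\gc$-dimension with $\lambda M\in\mathcal{A}_C$, and that $M$ is horizontally linked; these are exactly the standing assumptions of the present statement, using that a horizontally linked module is automatically stable by Theorem \ref{MS}. Consequently Theorem \ref{th1}(ii) applies verbatim and gives that $\rgr(M,C)\geq n$ holds if and only if $\lambda M$ satisfies $\widetilde{S}_n$. Chaining this with the reformulation of the previous paragraph yields that $M$ is reduced $\gc$-perfect if and only if $\rgr(M,C)\geq n$, if and only if $\lambda M$ satisfies $\widetilde{S}_n$, which is the desired conclusion.

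There is no serious obstacle here, since the substantive work has already been carried out in establishing Theorem \ref{th1}(ii); the only point requiring a moment's care is confirming that the index $n$ appearing in $\widetilde{S}_n$ is the same as $\gkd_R(M)=n$. This match is forced precisely by reduced $\gc$-perfectness, which demands that $\rgr(M,C)$ attain its maximal admissible value $\gkd_R(M)=n$, so that the threshold at which Theorem \ref{th1}(ii) is applied coincides with the given $\gc$-dimension. I would present the argument in this compact form rather than re-deriving the minimal projective resolution computation underlying Theorem \ref{th1}(ii).
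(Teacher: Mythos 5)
Your proposal is correct and follows exactly the paper's own argument: the paper likewise reduces the statement to the inequality $\rgr(M,C)\leq\gkd_R(M)=n$ (so that reduced $\gc$-perfectness is equivalent to $\rgr(M,C)\geq n$) and then cites Theorem \ref{th1} to identify this condition with $\lambda M$ satisfying $\widetilde{S}_n$. Nothing is missing, and your extra check that horizontal linkage supplies the stability hypothesis of Theorem \ref{th1} via Theorem \ref{MS} is a sound, if implicit in the paper, point of care.
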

\begin{proof}
As $M$ has a positive and finite $\gc$-dimension $n$,
$\rgr(M,C)\leq\gkd_R(M)=n$. On the other hand, by Theorem \ref{th1},
$\lambda M$ satisfies $\widetilde{S}_n$ if and only if
$\rgr(M,C)\geq n$. Therefore the assertion is obvious.
\end{proof}
In the following, we present
an equivalent condition for an Eilenberg-Maclane horizontally linked
$R$--module in terms of its linked module.

\begin{cor}\label{cor1}
Assume that $R$ is a Cohen-Macaulay local ring of dimension $d$ and
that $M$ a horizontally-linked $R$--module with $\depth_R(M)= n<d$.
If $\gd_R(\lambda M)<\infty$, then $M$ is Eilenberg-Maclane module
if and only if $\lambda M$ satisfies $\widetilde{S}_{d-n}$.
\end{cor}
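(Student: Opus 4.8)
The plan is to run the whole argument with the semidualizing module taken to be the \emph{canonical} module $\omega_R$ rather than $R$ itself, so that Local Duality (Theorem \ref{th8}) converts the local cohomology appearing in the Eilenberg--Maclane condition into $\Ext(-,\omega_R)$-modules, and then to feed the outcome into Theorem \ref{th7}. First I would use that, since $M$ is horizontally linked, Theorem \ref{MS} makes $M$ a (first) syzygy module, so $M$ embeds in a free module and $\Ass_R(M)\subseteq\Ass_R(R)=\Min R$. Over the Cohen--Macaulay ring $R$ every minimal prime has coheight $d$, whence $\dim_R(M)=d$. Because $\hh^{\depth_R(M)}_\fm(M)\neq0$ and $\hh^{\dim_R(M)}_\fm(M)\neq0$ while all other local cohomology vanishes outside $[\depth_R(M),\dim_R(M)]$, being Eilenberg--Maclane is equivalent to the single statement that $\hh^i_\fm(M)=0$ for all $i$ with $n<i<d$.

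Next I would pin down the relevant $\gc$-dimension. For $C=\omega_R$ the discussion following Remark \ref{rem1} gives $\g_{\omega_R}\text{-}\dim_R(M)<\infty$ automatically, and the Auslander--Bridger formula Theorem \ref{G3}(ii) yields $\g_{\omega_R}\text{-}\dim_R(M)=\depth R-\depth_R(M)=d-n>0$. Applying Local Duality and the faithfulness of Matlis duality $\Hom_R(-,\E_R(k))$, one has $\hh^i_\fm(M)=0$ if and only if $\Ext^{d-i}_R(M,\omega_R)=0$. Re-indexing by $j=d-i$, the Eilenberg--Maclane condition $\hh^i_\fm(M)=0$ for $n<i<d$ becomes $\Ext^j_R(M,\omega_R)=0$ for $0<j<d-n$. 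Since Theorem \ref{G3}(i) forces $\Ext^{d-n}_R(M,\omega_R)\neq0$, this is exactly $\rgr(M,\omega_R)=d-n=\g_{\omega_R}\text{-}\dim_R(M)$; that is, by Definition \ref{rgr}, $M$ is Eilenberg--Maclane if and only if $M$ is reduced $\g_{\omega_R}$-perfect.

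It then remains to invoke Theorem \ref{th7} with $C=\omega_R$. The hypothesis $\gd_R(\lambda M)<\infty$ is consumed precisely here: by Example \ref{example1}(ii) it is equivalent to $\lambda M\in\mathcal{A}_{\omega_R}$, which is the membership condition Theorem \ref{th7} requires. As $M$ is horizontally linked with $0<\g_{\omega_R}\text{-}\dim_R(M)=d-n<\infty$ and $\lambda M\in\mathcal{A}_{\omega_R}$, Theorem \ref{th7} gives that $M$ is reduced $\g_{\omega_R}$-perfect if and only if $\lambda M$ satisfies $\widetilde{S}_{d-n}$. Chaining this with the equivalence of the previous paragraph closes the argument.

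The main obstacle I anticipate is conceptual rather than computational: one must resist taking $C=R$ (which the $\gd$ in the hypothesis might suggest), since reduced $\g$-perfectness relative to $R$ is measured by $\Ext(-,R)$ and does not detect local cohomology over a non-Gorenstein base. The correct bookkeeping runs the reduced-perfect machinery with $C=\omega_R$, so that Local Duality matches $\Ext(-,\omega_R)$, while the Gorenstein-dimension hypothesis on $\lambda M$ serves the single purpose of placing $\lambda M$ in $\mathcal{A}_{\omega_R}$. The only other point needing care is keeping the two index conventions aligned: the corollary's $n=\depth_R(M)$ versus the value $d-n=\g_{\omega_R}\text{-}\dim_R(M)$ that plays the role of the index ``$n$'' in Theorem \ref{th7} and in $\widetilde{S}_{d-n}$.
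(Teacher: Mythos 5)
Your proposal is correct and follows essentially the same route as the paper's own (very terse) proof: take $C=\omega_R$, use Example \ref{example1}(ii) to place $\lambda M\in\mathcal{A}_{\omega_R}$, translate the Eilenberg--Maclane condition through the Local Duality Theorem \ref{th8} into reduced $\g_{\omega_R}$-perfectness, and conclude by Theorem \ref{th7}. The only step you omit is the paper's opening reduction ``we may assume that $R$ is complete,'' which is needed because a Cohen--Macaulay local ring need not admit a canonical module; since depth, dimension, local cohomology, the conditions $\widetilde{S}_k$, horizontal linkage and Gorenstein dimension all behave well under completion, this is a harmless addition to your argument.
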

\begin{proof}
We may assume that $R$ is complete with the canonical module
$\omega_R$. By Example \ref{example1}(ii), $\lambda
M\in\mathcal{A}_{\omega_R}$. Now the assertion is clear by Theorem
\ref{th7} and the Local Duality Theorem \ref{th8}.
\end{proof}
Recall that an $R$--module $M$ of dimension $d\geq1$ is called a
\emph{ generalized Cohen-Macaulay} module if
$\ell(\hh^i_\fm(M))<\infty$ for all $i$, $0\leq i\leq d-1$, where
$\ell$ denotes the length. For an $R$--module $M$ over a local ring
$R$, it is well-known that $$\depth_R(M)=\inf\{i|
\hh^i_\fm(M)\neq0\}.$$ Hence, if $M$ is an Eilenberg-Maclane
$R$--module which is not maximal Cohen-Macaulay, then
$\cc(M)=\depth_R(M)$. We end the paper by the following result which
is an immediate consequence of Theorem \ref{cor3}.
\begin{cor}\label{cor4}
Let $R$ be a Cohen-Macaulay local ring of dimension $d$ with
canonical module $\omega_R$ and let $M$ be a horizontally linked
$R$--module which is not Cohen-Macaulay $R$--module. Assume that $M$
is an Eilenberg-Maclane module and that $\gd_R(\lambda
M)<\infty$(e.g. $R$ is Gorenstein). Then $M$ is generalized
Cohen-Macaulay if and only if $\depth_{R_\fp}((\lambda
M)_\fp)+\depth_R(M)> d$ for all $\fp\in\cm(M)\setminus\{\fm\}$ and
$\depth_R(\lambda M)+\depth_R(M)=d$.
\end{cor}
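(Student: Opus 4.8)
The plan is to reduce the claim directly to Theorem \ref{cor3} by converting the hypothesis "generalized Cohen-Macaulay" into the finite generation of a single local cohomology module, using the Eilenberg-Maclane structure of $M$ to pin down $\cc(M)$.

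First I would record the structural facts. Since $M$ is horizontally linked, Theorem \ref{MS} shows that $M$ is a (first) syzygy module, so $\dim_R(M)=\dim R=d$ for every $\fp\in\Supp_R(M)$; together with $\depth_R(M)=n<d$ this says that $M$ is not maximal Cohen-Macaulay, in particular $d\geq1$. Because $M$ is Eilenberg-Maclane, $\hh^i_\fm(M)=0$ for all $i\notin\{\depth_R(M),\dim_R(M)\}=\{n,d\}$, so the only possibly nonvanishing local cohomology module in the range $i<d$ sits in degree $n$. As observed in the paragraph preceding the statement, this forces $\cc(M)=\depth_R(M)=n$.

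Next I would translate the definition of generalized Cohen-Macaulayness. By definition $M$ is generalized Cohen-Macaulay exactly when $\ell(\hh^i_\fm(M))<\infty$ for all $i$ with $0\leq i\leq d-1$; for our Eilenberg-Maclane $M$ the only such module that can be nonzero is $\hh^n_\fm(M)=\hh^{\cc(M)}_\fm(M)$, so the whole condition collapses to $\ell(\hh^{\cc(M)}_\fm(M))<\infty$. Since $\hh^{\cc(M)}_\fm(M)$ is Artinian, finite length is equivalent to finite generation, and hence $M$ is generalized Cohen-Macaulay if and only if $\hh^{\cc(M)}_\fm(M)$ is finitely generated.

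Finally I would invoke Theorem \ref{cor3}: under the standing hypotheses (horizontally linked non-Cohen-Macaulay $M$ with $\gd_R(\lambda M)<\infty$ over the Cohen-Macaulay ring $R$ with canonical module $\omega_R$), the module $\hh^{\cc(M)}_\fm(M)$ is finitely generated if and only if $\depth_R(\lambda M)+\cc(M)=d$ and $\depth_{R_\fp}((\lambda M)_\fp)+\cc(M)>d$ for all $\fp\in\cm(M)\setminus\{\fm\}$. Substituting $\cc(M)=\depth_R(M)$ from the first step yields precisely the two displayed conditions of the corollary, which completes the argument. This is indeed an immediate consequence of Theorem \ref{cor3}; the only point requiring a little care, rather than a genuine obstacle, is the collapse of the length conditions to the single degree $n$ together with the standard equivalence "Artinian and finitely generated" $\Leftrightarrow$ "finite length," after which everything is a direct citation.
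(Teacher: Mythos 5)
Your proof is correct and takes essentially the same route as the paper: the paper's own (implicit) argument is exactly the observation, recorded in the paragraph preceding the corollary, that $\cc(M)=\depth_R(M)$ for an Eilenberg-Maclane module that is not maximal Cohen-Macaulay, followed by a direct appeal to Theorem \ref{cor3}. Your extra steps---collapsing the generalized Cohen-Macaulay condition to finite length of $\hh^{\cc(M)}_\fm(M)$ and using Artinianness of local cohomology to pass from finite length to finite generation---merely spell out what the paper leaves as ``immediate.''
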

{\bf Acknowledgement.} The authors owe their progress on this paper
to Jan Strooker who read the first draft, shared his ideas with us
and gave many encouraging comments. They also thank the referee for
giving invaluable corrections and comments.
\bibliographystyle{amsplain}

\end{document}